\documentclass[11pt,twoside]{scrartcl}

\usepackage{dnmath}

\usepackage{amscd}
\usepackage{latexsym}
\usepackage{booktabs}
\usepackage[all]{xy}
\usepackage{verbatim}

\DeclareFontFamily{U}{mathc}{}
\DeclareFontShape{U}{mathc}{m}{it}%
{<->s*[1.03] mathc10}{}
\DeclareMathAlphabet{\mathcal}{U}{mathc}{m}{it}

\usepackage{dnnotation}
\usepackage{burnside-notation}

\theoremstyle{dnplain}

\theoremstyle{dndefinition}

\theoremstyle{dnremark}

\title{Grothendieck Rings of Module Categories over Drinfeld Doubles}
\author{Dmitri Nikshych}
\date{}
\dnshorttitle{Grothendieck Rings over Drinfeld Doubles}
\dnshortauthor{D. Nikshych}
\dnaffiliation{Department of Mathematics and Statistics,\\
University of New Hampshire, Durham, NH 03824, USA}
\dnemail{dmitri.nikshych@unh.edu}

\begin{document}

\begin{abstract}
Let $k$ be an algebraically closed field of characteristic zero and $G$ a
finite group.  We realize the based Grothendieck ring
$R_G$ of $\Rep(D(G))$-module categories  as the degree-two
cocycle-decorated double Burnside ring and derive an explicit Clifford
formula for multiplication and for the action of $R_G$ on the Grothendieck
group of $\Vec_G$-module categories.  We determine the extremal based ideals,
study factorization through smaller groups, and prove a Mackey theorem
for standard two-sided subgroup inductions.  We prove that
$\mathbb C\otimes_{\mathbb Z}R_G$ is semisimple exactly when $G$ is cyclic.
We study the Brauer--Picard action on indecomposable $\Vec_G$-module categories and show that it is
transitive exactly when $G$ is abelian of square-free exponent.
For abelian $G$, we determine the possible abstract group types of
Lagrangian subgroups of $G\oplus\widehat G$, apply the known orthogonal
classification in the homocyclic case, and exhibit same-type
nonconjugate Lagrangians for mixed exponents.
\end{abstract}

\maketitle
\tableofcontents


\section{Introduction}
\label{sect: introduction}

Let $\B$ be a non-degenerate braided fusion category.  Finite semisimple
$\B$-module categories form a monoidal $2$-category under relative tensor
product, and their indecomposable equivalence classes give a distinguished
basis of the Grothendieck ring $K^\mmod(\B)$.  Even when the module
categories themselves are classified, the multiplication and the resulting
ring structure can be difficult to see.  This paper studies that problem
for $\B=\Rep(D(G))\simeq\Z(\Vec_G)$, where $G$ is a finite group and
$D(G)$ is its Drinfeld double.  Our main object is therefore
\begin{equation}
\label{eq: definition RG}
 R_G:=K^\mmod\bigl(\Rep(D(G))\bigr)
 =K_0\bigl(\mathbf{Mod}(\Rep(D(G)))\bigr).
\end{equation}
The main new results concern the explicit multiplication in $R_G$, its
extremal based ideals and factorization structure, its canonical one-sided action,
and Brauer--Picard orbits.  In particular, we determine exactly when the
complexified ring is semisimple.

The categorical description of the basis of $R_G$ is standard.  For any
fusion category $\mathcal C$ there is a $2$-equivalence between the
$2$-category of $\mathcal C$-bimodule categories and the $2$-category of
$\Z(\mathcal C)$-module categories, and this equivalence is monoidal with
respect to relative tensor product; see
\cite[Corollary~2.2]{O} and
\cite[Equation~(17) and Proposition~3.11]{ENO2}.
Taking $\mathcal C=\Vec_G$, module categories over $\Z(\Vec_G)$ are
therefore identified with $\Vec_G$-bimodule categories.
 Indecomposable $\Vec_G$--$\Vec_H$ bimodule categories are described by subgroup--cocycle
pairs $(L,[\mu])$, with $L\leq G\times H$ and
$[\mu]\in H^2(L,k^\times)$; see \cite{O,EGNO,GP}.  We organize these
pairs as transitive degree-two cocycle-decorated bisets and write
$\DBurn(G,H)$ for their Grothendieck group under disjoint union.  This
classification gives basis-preserving isomorphisms
\begin{equation}
\label{eq: introduction decorated model}
 \DBurn(G,H)\xrightarrow{\ \sim\ }
 K_0\bigl(\mathbf{Bimod}(\Vec_G,\Vec_H)\bigr).
\end{equation}
Relative tensor product and its associativity are part of the higher Morita
theory of fusion categories \cite{ENO2,Gr}.
The principal calculation of this paper is the explicit Clifford
decomposition of the relative tensor product in subgroup--cocycle
coordinates.  It supplies composition between these groups and, for
$G=H$, gives
\begin{equation}
\label{eq: introduction RG DBurn}
 R_G\cong\DBurn(G,G)
\end{equation}
as based rings.

For basis elements $[L,\mu]\in\DBurn(G,H)$ and
$[M,\nu]\in\DBurn(H,K)$, write $p_2(L)$ and $p_1(M)$ for their
respective projections to $H$.  With the groups, cocycles, and actions
defined in Subsection~\ref{subsect: definition Clifford composition},
Theorem~\ref{thm: categorical Clifford composition} gives
\begin{equation}
\label{eq: introduction Clifford formula}
 [L,\mu]\circ_H[M,\nu]
 =
 \sum_{g\in p_2(L)\backslash H/p_1(M)}
 \ \sum_{[\pi]\in N_g\backslash\Irr(k_{\alpha_g}[K_g])}
 [N_{g,\pi},\lambda_{g,\pi}].
\end{equation}
The first sum is the ordinary biset double-coset sum.  For each
representative $g$, the second sum runs over the $N_g$-orbits of
isomorphism classes of irreducible $k_{\alpha_g}[K_g]$-modules.
Each orbit contributes the stabilizer $N_{g,\pi}$ of a representative
$[\pi]$, decorated by the cocycle class $[\lambda_{g,\pi}]$ constructed
in that subsection.  Conjugate output pairs are counted with
multiplicity. The new content is the explicit subgroup and cocycle data
of every summand (Theorem~\ref{thm: categorical Clifford composition}).
Galindo--Plavnik's
equivariantization model for relative tensor product \cite[Theorem~7.15 and
Remark~7.17(2)]{GP} is used in the proof.  The formula also specializes to
the canonical action of $R_G$ by taking the right endpoint group to be the trivial group
and the other two groups to be $G$.  Its target is
$\Omega_k^{(2)}(G):=K_0(\mathbf{Mod}(\Vec_G))$
(Corollary~\ref{thm: one-sided Clifford action}); we write $\Omega_G$ for its
distinguished basis.

As an application of the Clifford formula, we recover the invertibility
criterion of Davydov \cite[Corollary~3.6.3]{D}; see also
\cite[Proposition~5.2]{NR}.  A transitive decorated $(G,H)$-biset
$[L,\mu]$ is invertible precisely when $L\leq G\times H$ is subdirect
and its mixed commutator pairing is perfect
(Theorem~\ref{thm: invertibility criterion}).

We next study the ring structure of $R_G$.  The noninvertible basis elements
span a two-sided based ideal $I_G$, which we prove is the unique largest
proper based ideal, with
\begin{equation}
\label{eq: introduction invertible quotient}
 R_G/I_G\cong\mathbb Z[\BrPic(\Vec_G)].
\end{equation}
Thus, unlike in a fusion ring, all noninvertible basis elements belong to a
proper ideal.  At the other end, the ideal $J_G$ of decorated endomorphisms
factoring through the trivial group is the unique minimal nonzero based
ideal.  It has basis $E_{a,b}=a\circ_1b^\vee$, indexed by
$a,b\in\Omega_G$, where $(-)^\vee$ is bimodule duality, defined in
Subsection~\ref{subsect: singular ideal}.  Identify $\DBurn(1,1)$ with
$\mathbb Z$ and define $c_{b,c}$ by $b^\vee\circ_Gc=c_{b,c}$.  Then
\begin{equation}
\label{eq: introduction sandwich ideal}
 E_{a,b}E_{c,d}=c_{b,c}E_{a,d}.
\end{equation}
If $A_b\in\Z(\Vec_G)$ is the Lagrangian algebra corresponding to $b$, then
$c_{b,c}=\dim\Hom_{\Z(\Vec_G)}(A_b,A_c)$, where Hom is taken between the
underlying objects.  The restriction of the one-sided action to $J_G$ is
$E_{a,b}\triangleright c=c_{b,c}a$.  The coefficients form the one-sided
composition matrix $C_G=(c_{b,c})$.  Using these identities, we determine
the radical of the complexified bottom ideal and prove that the full
one-sided action is faithful if and only if $G=1$.

Between the bottom and the top, let $F_{<G}$ be the span of all
distinguished-basis constituents of composites through groups of order
smaller than $|G|$.  For $G\neq1$ one has
\begin{equation}
\label{eq: introduction ideal chain}
 0\subset J_G\subseteq F_{<G}\subseteq I_G\subset R_G.
\end{equation}
We call $I_G/F_{<G}$ the \emph{essential quotient}.  Only noninvertible subdirect
pairs can survive there; the general structure of these subquotients remains
open.
We also decompose the relative tensor product of bimodule categories induced
from $H,K\leq G$ by standard two-sided subgroup induction.  Its summand
indexed by $HgK$ is a relative tensor product over
$\Vec_{g^{-1}Hg\cap K}$.

The main ring-theoretic consequence is that
$\mathbb C\otimes_{\mathbb Z}R_G$ is semisimple if and only if $G$ is
cyclic.
For noncyclic $G$, the complexified bottom ideal has a nonzero Jacobson
radical contained in the radical of $\mathbb C\otimes_{\mathbb Z}R_G$.  For
cyclic $p$-groups, we consider the chain of ideals defined by factorization
through subgroups and prove that the successive quotients are semisimple
after complexification.  This proves semisimplicity of the whole algebra.
The result for arbitrary cyclic groups follows by decomposing the ring as a
tensor product over the Sylow subgroups
(Corollary~\ref{cor: full semisimplicity cyclicity}).

Finally, we study the action of $\BrPic(\Vec_G)$ on $\Omega_G$.  The orbit of
an indecomposable module category $\M$ is sent to the tensor-equivalence class
of its dual category $\Fun_{\Vec_G}(\M,\M)$; by \cite[Proposition~3.6 and Remark~3.7]{MN}, this gives a
bijection with the tensor-equivalence classes of fusion categories in the
Morita class of $\Vec_G$.  We construct a canonical homomorphism
$\BrPic(\Vec_G)\to\Out(G/\Rad_{\mathrm{sol}}(G))$ for which taking
$[K,\nu]$ to the conjugacy class of the image of $K$ in this quotient
is equivariant.  In particular, the abstract quotient $K/\Rad_{\mathrm{sol}}(K)$ is constant
on each orbit.  We also bound the number of orbits below by the number of automorphism
orbits of subgroups of $G/\Rad_{\mathrm{sol}}(G)$
(Corollary~\ref{cor: orbit count solvable quotient}).
When $\Rad_{\mathrm{sol}}(G)=1$, we give an explicit
orbit criterion in terms of automorphisms and restrictions of global
cohomology classes.  The action is transitive exactly when $G$ is abelian
of square-free exponent.

For abelian $G$, the bijection from $\Omega_G$ to the Lagrangian subgroups of
$(G\oplus\widehat G,q)$, where $q(g,\chi)=\chi(g)$, intertwines the
Brauer--Picard action with the orthogonal-group action \cite{ENO2,NN,NR}.
We prove that the possible abstract group types of Lagrangian subgroups
are precisely $\{\operatorname{type}(K\oplus G/K):K\leq G\}$
(Theorem~\ref{thm: possible Lagrangian types}); equivalently, the possible
symmetric equivalence types of Lagrangian subcategories are precisely
$\Rep(K\oplus G/K)$ for $K\leq G$.  We also obtain lower bounds for the
number of orbits for abelian $p$-groups
(Proposition~\ref{prop: abelian orbit count lower bound}).
For homocyclic groups $G=C_{p^n}^k$, Zimmermann's orthogonal classification
\cite{Zimmermann} gives a complete classification of Brauer--Picard orbits
by the abstract group type of the associated Lagrangian subgroup
(Theorem~\ref{thm: homocyclic Lagrangian orbits}), with
$\binom{k+\lfloor n/2\rfloor}{k}$ orbits
(Corollary~\ref{cor: homocyclic orbit count}).
For mixed exponents, abstract group type need not determine the orbit:
Example~\ref{ex: mixed same type nonconjugate Lagrangians} gives two
isomorphic Lagrangian subgroups that are not orthogonally conjugate.

\medskip
\noindent\textbf{Relation to previous work.}
The classical Burnside ring grew out of Burnside's fixed-point calculations
and tables of marks \cite{Burnside}, with its modern ring formulation due to
Solomon \cite{Solomon}.  Cohomological decorations of one-sided $G$-sets
appear in the monomial and crossed Burnside rings \cite{Dr,OY}, in the
cohomological Burnside rings of Hartmann--Yal\c{c}{\i}n \cite{HY}, and in
the decorated-set formalism of Gunnells--Rose--Rumynin \cite{GRR}.  In the
other direction, the biset category and double Burnside rings were developed
by Bouc \cite{Bo1,Bo2}, while the fibered bisets of Boltje--Co\c{s}kun
\cite{BC} supply a degree-one decorated double theory.  For each pair
$(G,H)$, $\DBurn(G,H)$ is, as an abelian group, the degree-two cohomological
Burnside group of $G\times H$.  Our operation is instead the composition
$\DBurn(G,H)\times\DBurn(H,K)\to\DBurn(G,K)$, defined by relative tensor
product rather than Cartesian product of decorated sets.  The comparison
with fibered bisets is by cohomological degree: their degree-one decorations
are replaced here by degree-two cocycle classes.

The ordinary double Burnside algebra is semisimple in characteristic
zero exactly when the group is cyclic, by Bouc's criterion
\cite[Proposition~6.1.7]{Bo2}; see also
\cite[Remark~8.12]{BoltjeDanz}.
Boltje--Danz \cite[Theorem~8.11]{BoltjeDanz} obtain the cyclic
decomposition through inverse-category algebras, while
Bouc--Stancu--Th\'evenaz
\cite[Theorem~10.2 and Corollary~10.3]{BoucStancuThevenaz}
analyze the ideal of maps factoring through the trivial group and its
intersection with the radical.  The corresponding results in this paper involve
the projective/Lagrangian composition matrix and the cocycle-sensitive
factorization arising from Clifford composition.

On the categorical side, the classification of module and bimodule
categories over pointed fusion categories is standard; see
\cite{O,EGNO,GP}.  Relative tensor product belongs to the higher Morita
theory \cite{ENO2,Gr}, and we use the equivariantization model of
Galindo--Plavnik \cite[Theorem~7.15 and Remark~7.17(2)]{GP} to obtain
the explicit subgroup--cocycle Clifford formula.  The invertibility
criterion itself is due to Davydov; see \cite[Corollary~3.6.3]{D}
and the related formulations in \cite{NR,GP}.

For odd $p$, Mason and Ng proved that Lagrangian subgroups
isomorphic to $C_{p^n}^k$ in a fixed nondegenerate form are
conjugate under its isometry group
\cite[Theorem~13.7(ii)]{MasonNg}.
For $G=C_{p^n}^k$, we apply Zimmermann's classification
\cite{Zimmermann} to classify all Lagrangian subgroups of
$G\oplus\widehat G$ up to orthogonal conjugacy, including
$p=2$ (Theorem~\ref{thm: homocyclic Lagrangian orbits}).
Mason and Ng  showed that there exist nonconjugate Lagrangians
of different abstract types \cite[Example~14.1]{MasonNg};
our Example~\ref{ex: mixed same type nonconjugate Lagrangians}
gives nonconjugate Lagrangians of the same abstract type
for mixed exponents.

\medskip
\noindent\textbf{Outline.}
Section~\ref{sect: decorated bisets} develops the decorated-biset model
and proves the Clifford composition formula, including its one-sided
specialization and a derivation of the known invertibility criterion.
Sections~\ref{sect: top and bottom ideals} and~\ref{sect: factorization and Mackey}
turn to the structure of $R_G$: first the extremal based ideals, Gram matrix,
and canonical action, then factorization through smaller groups, the
Mackey theorem, and the cyclic case.
Section~\ref{sect: BrPic orbits Morita class} studies Brauer--Picard orbits,
beginning with general orbit results and transitivity, then passing to
abelian Lagrangian types and orbit counts, the homocyclic case, and the
mixed-exponent counterexample.

\section*{Acknowledgments}

The author is grateful to Alexei Davydov for useful discussions and for
sharing the unpublished work~\cite{ClarkDavydov}.  The author also
acknowledges discussions with ChatGPT, developed by OpenAI, which were
useful in formulating several of the questions, exploring possible
approaches, and assisting with typesetting and manuscript preparation.
The mathematical arguments and decisions concerning their presentation
are the author's own.

The work of the author was supported by the National Science Foundation
under Grant No.~DMS-2302267.


\section{Decorated bisets and Clifford composition}
\label{sect: decorated bisets}
\label{sect: Clifford composition}

Let $k$ be an algebraically closed field of characteristic zero, and let
$G$ and $H$ be finite groups.  We begin with a coordinate-free definition
of a cocycle-decorated biset.  The usual subgroup--cocycle description then
follows from Shapiro's lemma.

\subsection{Bisets and decorations}
\label{subsect: bisets and decorations}

A finite \emph{$(G,H)$-biset} is a finite set $X$ with commuting left
$G$- and right $H$-actions.  We regard it as a left $Q=G\times H$-set by
transporting the usual $G\times H^{\op}$-action along the isomorphism
$\iota_{G,H}:G\times H^{\op}\to G\times H$,
$(g,h^{\op})\mapsto(g,h^{-1})$; thus
\begin{equation}
\label{eq: biset as product group action}
 (g,h)\cdot x=gxh^{-1}.
\end{equation}
Consequently,
$\Stab_Q(x)=\{(g,h)\in G\times H\mid gx=xh\}$, and a transitive biset
with base point $x$ is the $Q$-set $Q/\Stab_Q(x)$.

For a finite $(G,H)$-biset $X$, let $\Fun(X,k^\times)$ be the group of
functions $X\to k^\times$, with pointwise multiplication and the permutation
$Q$-action
\begin{equation}
\label{eq: coefficient action decorated biset}
 \bigl((g,h)\cdot f\bigr)(x)=f(g^{-1}xh).
\end{equation}

\begin{definition}
A \emph{$k^\times$-decorated $(G,H)$-biset}, or simply a
\emph{decorated $(G,H)$-biset}, is a pair $(X,[\xi])$, where
$[\xi]\in H^2\bigl(Q,\Fun(X,k^\times)\bigr)$,
and cohomology is computed for the action
\eqref{eq: coefficient action decorated biset}.
\end{definition}

Choose a normalized cocycle $\xi$ representing $[\xi]$ and write
$\xi(q_1,q_2;x)=\xi(q_1,q_2)(x)$.  Its cocycle identity is
\begin{equation}
\label{eq: decorated cocycle identity}
 \xi(q_2,q_3;q_1^{-1}\!\cdot x)\,
 \xi(q_1,q_2q_3;x)
 =
 \xi(q_1,q_2;x)\,
 \xi(q_1q_2,q_3;x),
\end{equation}
and normalization means
$\xi(1,q;x)=\xi(q,1;x)=1$.

An isomorphism
$\varphi:(X,[\xi_X])\xrightarrow{\sim}(Y,[\xi_Y])$
of decorated $(G,H)$-bisets is a $(G,H)$-equivariant bijection
$\varphi:X\to Y$ such that
$\varphi^*[\xi_Y]=[\xi_X]$ in $H^2\bigl(Q,\Fun(X,k^\times)\bigr)$,
where $\varphi^*f=f\circ\varphi$.

Decorated bisets have disjoint unions.  Since
$\Fun(X\sqcup Y,k^\times)\cong
 \Fun(X,k^\times)\times\Fun(Y,k^\times)$ as $Q$-modules, there is a
canonical decomposition
\[
 H^2\bigl(Q,\Fun(X\sqcup Y,k^\times)\bigr)
 \cong
 H^2\bigl(Q,\Fun(X,k^\times)\bigr)
 \times
 H^2\bigl(Q,\Fun(Y,k^\times)\bigr),
\]
and $(X,[\xi_X])\sqcup(Y,[\xi_Y])$ is decorated by the class
corresponding to $([\xi_X],[\xi_Y])$.

\begin{remark}
The definition and the Shapiro description below work equally well with an
abelian coefficient group $A$ carrying a left $Q$-action.  One gives
$\Fun(X,A)$ the diagonal action
$(q\cdot f)(x)=q\cdot_A f(q^{-1}\!\cdot x)$
and decorates $X$ by a class in $H^2(Q,\Fun(X,A))$.  For transitive
$X\cong Q/L$, evaluation at the base point gives
$H^2(Q,\Fun(X,A))\cong H^2(L,\operatorname{Res}^Q_L A)$.  For the remainder
of the paper, $A=k^\times$ with trivial $Q$-action.
\end{remark}

\subsection{The subgroup--cocycle description}

Let $(X,[\xi])$ be transitive, choose $x\in X$, and put
$L_x=\Stab_Q(x)$.  Since
$\Fun(X,k^\times)\cong\operatorname{Coind}_{L_x}^Q(k^\times)$,
Shapiro's lemma identifies evaluation at $x$ with an isomorphism
\begin{equation}
\label{eq: Shapiro decorated biset}
 \Sh_x:
 H^2\bigl(Q,\Fun(X,k^\times)\bigr)
 \xrightarrow{\ \sim\ }
 H^2(L_x,k^\times),
 \qquad
 [\xi]\longmapsto[\mu_x],
\end{equation}
where, for a normalized representative $\xi$,
$\mu_x(\ell_1,\ell_2)=\xi(\ell_1,\ell_2;x)$.  If $x'=q\cdot x$, then
$L_{x'}=q L_x q^{-1}$ and $[\mu_{x'}]={}^q[\mu_x]$, where
$({}^q\mu)(q\ell q^{-1},qm q^{-1})=\mu(\ell,m)$.  Conversely, the inverse
Shapiro map decorates $Q/L$ from any class in $H^2(L,k^\times)$.
Thus transitive decorated $(G,H)$-bisets are parametrized by
$Q$-conjugacy classes of pairs $(L,[\mu])$, where $L\leq G\times H$ and
$[\mu]\in H^2(L,k^\times)$. We call such a pair a \emph{cocycle-decorated subgroup} of $G\times H$.

Every decorated biset decomposes uniquely, up to isomorphism and order, as a
disjoint union of transitive decorated bisets.  We write $[L,\mu]$ for the
isomorphism class represented by $(L,[\mu])$.

\begin{definition}
The \emph{degree-two decorated Burnside group} from $H$ to $G$ is the
Grothendieck group, under disjoint union, of finite decorated
$(G,H)$-bisets:
\[
 \DBurn(G,H)
 :=K_0^{\sqcup}\bigl(\text{finite decorated $(G,H)$-bisets}\bigr).
\]
Its distinguished basis is indexed by the $G\times H$-conjugacy classes of
pairs $(L,[\mu])$ above.  When $G=H$, we call $\DBurn(G,G)$ the
\emph{degree-two cocycle-decorated double Burnside group} of $G$.
\end{definition}

\subsection{Decorated bisets and bimodule categories}
\label{subsect: twisted linearization and bimodule categories}

\medskip
\noindent
\textbf{Categorical actions.}
Let $Q$ be a finite group, let $Z$ be a finite left $Q$-set, and let
$\beta\in Z^2\bigl(Q,\Fun(Z,k^\times)\bigr)$
be normalized.  Denote by $\V_Q(Z,\beta)$ the category $\Vec_Z$ of
finite-dimensional $Z$-graded vector spaces, equipped with the following
$Q$-action.  For $q\in Q$, put
$(\mathsf T_q U)_z=U_{q^{-1}z}$.
Thus $\mathsf T_q$ sends the simple object indexed by $z$ to the simple
object indexed by $qz$.  The composition isomorphisms of the action are
$a^\beta_{q_1,q_2}:\mathsf T_{q_1}\mathsf T_{q_2} \xrightarrow{\ \sim\ }\mathsf T_{q_1q_2}$
whose $z$-component is multiplication by $\beta(q_1,q_2;z)$.  Its
coherence equation is precisely \eqref{eq: decorated cocycle identity}.  If
$\beta'=\beta\,df$ for a normalized $1$-cochain $f$, the identity functor
$F$ between the two underlying graded categories, with comparison maps
$\mathsf T'_qF\to F\mathsf T_q$ given on the $z$-component by multiplication
by $f(q;z)$, is a $Q$-equivariant equivalence.  Thus the equivariant
equivalence class of $\V_Q(Z,\beta)$ depends only on the decorated $Q$-set
$(Z,[\beta])$.

For $Q=G\times H$, the identification used in
\eqref{eq: biset as product group action} induces the tensor
equivalence
\begin{equation}
\label{eq: tensor equivalence biset convention}
 \Vec_G\boxtimes\Vec_H^{\op}
 \simeq
 \Vec_{G\times H^{\op}}
 \xrightarrow{\ \iota_{G,H}\ }
 \Vec_{G\times H},
 \qquad
 \delta_g\boxtimes\delta_h\longmapsto\delta_{(g,h^{-1})}.
\end{equation}
For a decorated $(G,H)$-biset $\mathbb X=(X,[\xi])$, choose a normalized
representative $\xi$ and write
$\V_{G,H}(\mathbb X)=\V_{G,H}(X,\xi)$
for the resulting bimodule category.  The associated bimodule structure is
given by $\delta_g\triangleright U=\mathsf T_{(g,1)}U$ and
$U\triangleleft\delta_h=\mathsf T_{(1,h^{-1})}U$.  Its associativity and
commuting isomorphisms are induced by the composition isomorphisms $a^\xi$.
This convention is the categorical counterpart of the biset convention
\eqref{eq: biset as product group action}.

\medskip
\noindent
\textbf{Subgroup--cocycle model and classification.}
We now compare this construction with the standard subgroup--cocycle
description of bimodule categories.  For $\mathbb X=(X,[\xi])$, write
$\M_{G,H}(X,[\xi]):=\V_{G,H}(\mathbb X)$.
Suppose first that $(X,[\xi])$ is transitive, choose $x\in X$, put
$L=L_x=\Stab_Q(x)$, and set $[\mu]=\Sh_x([\xi])$.
Choose a normalized representative $\mu\in Z^2(L,k^\times)$ and form the
algebra object
\begin{equation}
\label{eq: subgroup cocycle algebra object}
 A(L,\mu^{-1})=\bigoplus_{\ell\in L}\delta_\ell
 \quad\text{in}\quad
 \Vec_{G\times H},
 \qquad
 \delta_\ell\delta_{\ell'}
 =\mu(\ell,\ell')^{-1}\delta_{\ell\ell'}.
\end{equation}

\begin{proposition}
\label{prop: subgroup cocycle bimodule model}
There is an equivalence of $\Vec_G$--$\Vec_H$ bimodule categories
\begin{equation}
\label{eq: transitive linearization algebra model}
 \M_{G,H}(X,[\xi])
 \simeq
 \operatorname{RMod}_{A(L,\mu^{-1})}(\Vec_{G\times H}).
\end{equation}
The equivalence class is independent of the point $x$, of the representative
$\mu$, and of all auxiliary choices.
\end{proposition}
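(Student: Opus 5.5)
We construct an explicit functor and check that it is an equivalence of $\Vec_{G\times H}$-module categories. By \eqref{eq: tensor equivalence biset convention}, a $\Vec_G$--$\Vec_H$ bimodule category is the same as a left $\Vec_{G\times H}$-module category. With this identification, $\M_{G,H}(X,[\xi])=\V_Q(X,\xi)$ is $\Vec_X$ on which $\delta_q$ acts by $\mathsf T_q$, with associativity constraint $a^\xi$. The right-hand side is $\Vec_{G\times H}$ acting by left multiplication on right $A(L,\mu^{-1})$-modules.

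Identify $X$ with $Q/L$ via $q\mapsto q\cdot x$ and choose coset representatives $\{r_c\}_{c\in Q/L}$ with $r_{L}=1$. A right $A(L,\mu^{-1})$-module in $\Vec_Q$ is a $Q$-graded space $V$ together with maps $V_q\to V_{q\ell}$ satisfying the $\mu^{-1}$-twisted associativity. Such a module is determined up to isomorphism by its restriction to the representatives $V_{r_c}$. Concretely, the functor $V\mapsto\bigoplus_c V_{r_c}$, with $V_{r_c}$ placed in degree $c\in X$, gives an equivalence of abelian categories $\operatorname{RMod}_{A(L,\mu^{-1})}(\Vec_Q)\simeq\Vec_X$. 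Its quasi-inverse is the free module $U\mapsto\bigoplus_c \delta_{r_c}\otimes U_c\otimes A$. In particular, the simple objects $\delta_{r_c}A$ correspond to $c$.

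The next step is to transport the module structure. For $q\in Q$ write $qr_c=r_{qc}\,\ell(q,c)$ with $\ell(q,c)\in L$. Then $\delta_q\otimes\delta_{r_c}A\cong\delta_{r_{qc}}A$ through the action of $\delta_{\ell(q,c)}$. Composing these isomorphisms for $q_1,q_2$ and comparing with the isomorphism for $q_1q_2$ yields the scalar
\[
\mu\bigl(\ell(q_1,q_2c),\ell(q_2,c)\bigr)^{\pm1}.
\]
The function $(q_1,q_2;c)\mapsto\mu(\ell(q_1,q_2c),\ell(q_2,c))$ is exactly the standard explicit inverse-Shapiro cocycle $\widetilde\mu\in Z^2(Q,\Fun(X,k^\times))$ attached to $\mu$ and the chosen representatives, possibly up to relabeling $c\mapsto q^{-1}c$ in the coefficient action \eqref{eq: coefficient action decorated biset}. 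Restricting $\widetilde\mu$ to $L\times L$ at the base point gives $\mu$, since $\ell(\ell_i,\cdot)=\ell_i$ there. Hence $\Sh_x[\widetilde\mu]=[\mu]=\Sh_x[\xi]$. Because $\Sh_x$ is an isomorphism, $[\widetilde\mu]=[\xi]$. By the paragraph on categorical actions, $\V_Q(X,\widetilde\mu)\simeq\V_Q(X,\xi)$ equivariantly, and the first half of the statement follows.

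Independence of choices then follows. Changing $\mu$ by a coboundary $d\phi$ gives the algebra isomorphism $\delta_\ell\mapsto\phi(\ell)^{\pm1}\delta_\ell$, so the module categories are equivalent. Changing the representatives $r_c$ changes $\widetilde\mu$ by a coboundary. Changing $x$ to $q\cdot x$ replaces $(L,\mu)$ by $(qLq^{-1},{}^q\mu)$. In that case $A(qLq^{-1},{}^q\mu^{-1})\cong\delta_qA(L,\mu^{-1})\delta_{q^{-1}}$, which is Morita equivalent to $A(L,\mu^{-1})$ via the invertible bimodule $\delta_q A$. This gives an equivalence of module categories, namely right multiplication by $\delta_{q^{-1}}$.

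The main obstacle is the exact bookkeeping of signs and inverses. One has to check that the convention $\mu^{-1}$ in \eqref{eq: subgroup cocycle algebra object} produces $\widetilde\mu$ rather than $\widetilde\mu^{-1}$. One also has to check that the left/right and $q$ versus $q^{-1}$ conventions in \eqref{eq: coefficient action decorated biset} and in $\mathsf T_q$ match, so that the scalar really is a cocycle for the stated action. I would settle this by computing directly at the base point with $q_1,q_2\in L$, where everything reduces to $\mu$ against the multiplication of $A$.
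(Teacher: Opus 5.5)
Your proposal is correct and follows the paper's route. In both, the simple right $A(L,\mu^{-1})$-modules are identified with $Q/L\cong X$, the cocycle is computed at the base point, and $[\xi']=[\xi]$ follows from injectivity of $\Sh_x$; your coset-representative transport makes explicit the step the paper leaves implicit, namely that the algebra model is of the form $\V_Q(X,\xi')$.

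The sign you left as $\pm1$ comes out as $+1$, just as in the paper's base-point computation. The two successive left multiplications give $\delta_\ell\delta_m=\mu(\ell,m)^{-1}\delta_{\ell m}$, so the transported composition isomorphism at $x$ is multiplication by $\mu(\ell,m)=\xi'(\ell,m;x)$; this is why the algebra is twisted by $\mu^{-1}$.
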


\begin{proof}
Put $Q=G\times H$.  The simple right
$A(L,\mu^{-1})$-modules are represented by
$\delta_q\otimes A(L,\mu^{-1})$.  Indeed, a nonzero homogeneous vector in a
right module generates a submodule supported on one left coset of $L$, and
simplicity forces the homogeneous components on that coset to be
one-dimensional.  Two such modules are isomorphic exactly when the elements
of $Q$ lie in the same left coset of $L$.  Hence the $Q$-set of simple
objects is $Q/L\cong X$.

For a simple object $S$ fixed up to isomorphism by a subgroup, and choices
$c_\ell:\mathsf T_\ell S\to S$, our stabilizer-cocycle convention is
\[
 c_\ell\circ\mathsf T_\ell(c_m)
 =\gamma(\ell,m)^{-1}c_{\ell m}\circ a_{\ell,m,S}.
\]
At the base point, where $S=A(L,\mu^{-1})$, identify
$\delta_\ell\otimes A(L,\mu^{-1})$ with $A(L,\mu^{-1})$ by multiplication.
For $\ell,m\in L$, the two successive multiplication maps differ from
$c_{\ell m}\circ a_{\ell,m,S}$ by $\mu(\ell,m)^{-1}$.  The preceding rule
therefore gives $\gamma=\mu$.

The algebra model and $\M_{G,H}(X,[\xi])$ have the same $Q$-set of simple
objects.  Write $[\xi']$ for the action class of the algebra model under this
identification.  The calculation gives
$\Sh_x([\xi'])=[\mu]=\Sh_x([\xi])$.  Since $\Sh_x$ is injective,
$[\xi']=[\xi]$.  Rescaling the comparison isomorphisms therefore equips the
equivalence of the underlying semisimple categories with a $Q$-equivariant
structure.  The tensor equivalence
\eqref{eq: tensor equivalence biset convention} gives
\eqref{eq: transitive linearization algebra model}.  Conjugating the base
point or changing $\mu$ by a coboundary produces an equivariantly equivalent
module category.
\end{proof}

When the transitive decorated biset is represented by $(L,[\mu])$, we write
$\M_{G,H}(L,\mu):=\M_{G,H}(X,[\xi])$.
The construction is additive under disjoint unions.  Thus, for a decorated
biset with orbit data $(L_i,[\mu_i])$,
\begin{equation}
\label{eq: linearization orbit decomposition}
 \M_{G,H}(X,[\xi])
 \simeq\bigoplus_{i\in I}\M_{G,H}(L_i,\mu_i).
\end{equation}

Conversely, every finite semisimple $\Vec_G$--$\Vec_H$ bimodule category
arises in this way, and transitive decorated bisets correspond exactly to
indecomposable bimodule categories; see \cite[Appendix~8]{GP} and
\cite{EGNO}.

For a fusion category $\mathcal C$, let $\mathbf{Mod}(\mathcal C)$ denote
 the finite semisimple $2$-category of finite semisimple left
$\mathcal C$-module categories, module functors, and module natural
transformations.

The classification and additivity above give basis-preserving isomorphisms
of abelian groups
\begin{equation}
\label{eq: decorated biset K0 identification}
 \Phi_{G,H}:\DBurn(G,H)
 \xrightarrow{\ \sim\ }
 K_0\bigl(\mathbf{Bimod}(\Vec_G,\Vec_H)\bigr),
 \qquad
 [L,\mu]\longmapsto[\M_{G,H}(L,\mu)].
\end{equation}
Here $K_0$ is the split Grothendieck group on equivalence classes of
objects under direct sum.

When $G=H$, we abbreviate $\M_{G,G}(L,\mu)$ to $\M(L,\mu)$ and write
$\mathbf{Bimod}(\Vec_G) :=\mathbf{Bimod}(\Vec_G,\Vec_G)$.


\subsection{Subgroup--cocycle data for composition}
\label{subsect: definition Clifford composition}

For a subgroup $S\leq P\times Q$ put
\[
\begin{aligned}
 p_1(S)&=\{a\in P\mid (a,b)\in S\text{ for some }b\in Q\},&
 k_1(S)&=\{a\in P\mid (a,1)\in S\},\\
 p_2(S)&=\{b\in Q\mid (a,b)\in S\text{ for some }a\in P\},&
 k_2(S)&=\{b\in Q\mid (1,b)\in S\}.
\end{aligned}
\]
The subgroups $k_1(S)$ and $k_2(S)$ are the coordinate kernels:
$k_1(S)\times1=\ker(p_2|_S)$ and
$1\times k_2(S)=\ker(p_1|_S)$.
Let $G,H,K$ be finite groups, and let $(L,[\mu])$ and $(M,[\nu])$
represent transitive decorated $(G,H)$- and $(H,K)$-bisets, respectively.
Choose normalized representatives $\mu\in Z^2(L,k^\times)$ and
$\nu\in Z^2(M,k^\times)$.
The $G\times K$-orbits in the ordinary balanced product of the underlying
transitive bisets are indexed by
$p_2(L)\backslash H/p_1(M)$.
We now describe the Clifford data associated with a representative $g$ of
such a double coset.

Set $M^g:={}^{(g,1)}M$ and $\nu^g:={}^{(g,1)}\nu$,
and define the fiber-product group
\[
 F_g:=L\times_H M^g
 =\left\{\bigl((a,b),(b,c)\bigr)\ \middle|\
 (a,b)\in L,\ (b,c)\in M^g\right\}.
\]
The homomorphism $q_g:F_g\longrightarrow G\times K$, given by
$q_g\bigl((a,b),(b,c)\bigr)=(a,c)$,
has image
\[
 N_g:=L*M^g
 =\left\{(a,c)\in G\times K\ \middle|\
 (a,b)\in L,\ (b,c)\in M^g\text{ for some }b\in H\right\}
\]
and kernel naturally identified with
$K_g:=k_2(L)\cap k_1(M^g) =k_2(L)\cap{}^g k_1(M)\leq H$.
Thus there is an exact sequence
\[
 1\longrightarrow K_g\longrightarrow F_g
 \xrightarrow{\ q_g\ }N_g\longrightarrow 1,
\]
where $b\in K_g$ is identified with
$\bigl((1,b),(b,1)\bigr)\in F_g$.

Let $\operatorname{pr}_L:F_g\to L$ and
$\operatorname{pr}_M:F_g\to M^g$ be the two projections and put
\[
 \eta_g:=\operatorname{pr}_L^*\mu\,
          \operatorname{pr}_M^*\nu^g
 \in Z^2(F_g,k^\times),
 \qquad
 \alpha_g:=\eta_g|_{K_g}\in Z^2(K_g,k^\times).
\]
Write $\widetilde A_g:=k_{\eta_g}[F_g]$ and
$A_g:=k_{\alpha_g}[K_g]$.
The quotient map $q_g$ makes $\widetilde A_g$ a strongly
$N_g$-graded algebra,
$\widetilde A_g=\bigoplus_{n\in N_g}(\widetilde A_g)_n$, with
$(\widetilde A_g)_1=A_g$.
Indeed, every homogeneous component contains invertible twisted group-basis
elements, so multiplication induces isomorphisms
$(\widetilde A_g)_n\ot_{A_g}(\widetilde A_g)_m \xrightarrow{\ \sim\ }(\widetilde A_g)_{nm}$.
The homogeneous components therefore define a categorical action of $N_g$
on $A_g\text{-}\mathbf{mod}$, and hence an action on
$\Irr(A_g)$:
$n\cdot[\pi] =\left[(\widetilde A_g)_n\ot_{A_g}\pi\right]$.

For $[\pi]\in\Irr(A_g)$ let
$N_{g,\pi}:= \left\{n\in N_g\ \middle|\ (\widetilde A_g)_n\ot_{A_g}\pi\cong\pi\right\}$
be the stabilizer of $[\pi]$ under this action.  Choose
isomorphisms $c_n:(\widetilde A_g)_n\ot_{A_g}\pi
\xrightarrow{\ \sim\ }\pi$ for $n\in N_{g,\pi}$.
Let
\[
 \vartheta_{n,m}:(\widetilde A_g)_n\ot_{A_g}(\widetilde A_g)_m
 \xrightarrow{\ \sim\ }(\widetilde A_g)_{nm}
\]
be the isomorphism induced by multiplication in $\widetilde A_g$.  Define
$\lambda_{g,\pi}(n,m)\in k^\times$ by
\[
c_n\circ(\id\ot c_m) =\lambda_{g,\pi}(n,m)^{-1}\, c_{nm}\circ(\vartheta_{n,m}\ot\id).
\]
Associativity of multiplication in $\widetilde A_g$ gives the cocycle
identity for $\lambda_{g,\pi}$.  Since $\pi$ is simple, any other system of
intertwiners has the form $c'_n=t_n c_n$ for scalars $t_n\in k^\times$, and
then
$\lambda'_{g,\pi}(n,m) =\lambda_{g,\pi}(n,m)\frac{t_{nm}}{t_n t_m}$.
Thus the cohomology class $[\lambda_{g,\pi}]$ is independent of the chosen
intertwiners.  We call $\lambda_{g,\pi}$ a \emph{stabilizer cocycle}.
Its class vanishes exactly when the intertwiners can be rescaled so that
the displayed coherence equation has scalar $1$.
The pair $\bigl(N_{g,\pi},[\lambda_{g,\pi}]\bigr)$ is the output
subgroup--cocycle pair associated with $[\pi]$.

\subsection{The categorical Clifford formula}
\label{subsect: categorical Clifford formula}

\begin{theorem}
\label{thm: categorical Clifford composition}
\label{cor: categorical Clifford formula}
\label{thm: Clifford formula universal product}
Let $(L,[\mu])$ and $(M,[\nu])$ represent transitive decorated
$(G,H)$- and $(H,K)$-bisets, and use the Clifford data of
Subsection~\ref{subsect: definition Clifford composition}.  There is an
equivalence of $\Vec_G$--$\Vec_K$ bimodule categories
\begin{equation}
\label{eqn: categorical Clifford bimodule decomposition}
 \M_{G,H}(L,\mu)\boxtimes_{\Vec_H}\M_{H,K}(M,\nu)
 \simeq
 \bigoplus_{\substack{
  g\in p_2(L)\backslash H/p_1(M)\\
  [\pi]\in N_g\backslash\Irr(A_g)}}
 \M_{G,K}(N_{g,\pi},\lambda_{g,\pi}).
\end{equation}
The decomposition is independent of the choices of cocycle representatives,
double-coset representatives, orbit representatives, and intertwiners, up
to the usual equivalence; conjugate repeated outputs are retained with
multiplicity.
\end{theorem}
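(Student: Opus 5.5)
I would prove the theorem by computing the relative tensor product in the algebra-object model of Proposition~\ref{prop: subgroup cocycle bimodule model}, and then decomposing the answer with Clifford theory for strongly graded algebras. First I replace both factors by module categories over twisted group algebras. The relative tensor product $\M_{G,H}(L,\mu)\boxtimes_{\Vec_H}\M_{H,K}(M,\nu)$ is then the category of right modules in $\Vec_{G\times H\times K}$ over $A(L,\mu^{-1})\otimes A(M,\nu^{-1})$. Here the two middle $H$-gradings are balanced, so this is the induced algebra over $\Vec_{G\times H\times H\times K}$ restricted along the diagonal in the middle. Equivalently, and more concretely, I would use the Galindo--Plavnik equivariantization model \cite[Theorem~7.15 and Remark~7.17(2)]{GP}. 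It presents $\M\boxtimes_{\Vec_H}\N$ as the $H$-equivariantization of $\M\boxtimes\N$ for the diagonal middle action, that is, as the category of $H$-equivariant objects in $\Vec_{X\times Y}$ twisted by $\xi\boxtimes\zeta$.

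\textbf{Step 1 (orbit decomposition).} The underlying decorated $(G\times H\times K)$-set $X\times Y$ splits into $G\times H\times K$-orbits, and these are indexed by $p_2(L)\backslash H/p_1(M)$. The equivariantization respects this splitting, which gives the outer sum. For a representative $g$, the stabilizer in $G\times H\times K$ of the base point $(x, (g,1)y)$ is exactly $F_g=L\times_H M^g$. By the Remark after the definition of decorated bisets (Shapiro's lemma), its decoration is $\eta_g=\mathrm{pr}_L^*\mu\,\mathrm{pr}_M^*\nu^g$, where conjugation by $(g,1)$ produces $\nu^g$. So the $g$-summand is equivalent, as a $\Vec_G$--$\Vec_K$ bimodule category, to $\Vec_{G\times K}$-graded modules induced from $\widetilde A_g$-modules. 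Concretely, it is $\operatorname{Ind}_{N_g}^{G\times K}$ of the category $\widetilde A_g\text{-}\mathbf{mod}$, viewed with its $N_g$-action through $q_g$, with $K_g=\ker q_g$ acting trivially on the grading.

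\textbf{Step 2 (Clifford theory).} The $N_g$-graded category $\widetilde A_g\text{-}\mathbf{mod}$ decomposes as a module category over $\Vec_{N_g}$ according to $N_g$-orbits on $\Irr(A_g)$. Since $\widetilde A_g$ is strongly graded, the restriction functor to $A_g$-modules identifies $\widetilde A_g$-modules with $N_g$-equivariant objects of $A_g\text{-}\mathbf{mod}$ for the categorical action $n\mapsto(\widetilde A_g)_n\otimes_{A_g}-$, which is Dade's equivalence. The subcategory of $A_g\text{-}\mathbf{mod}$ supported on the orbit of $[\pi]$ is, as an $N_g$-category, $\V_{N_g}(N_g/N_{g,\pi},\beta)$ with stabilizer cocycle $\lambda_{g,\pi}$. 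This uses the stabilizer-cocycle convention $c_\ell\circ\mathsf T_\ell(c_m)=\gamma(\ell,m)^{-1}c_{\ell m}\circ a_{\ell,m}$ already fixed in the proof of Proposition~\ref{prop: subgroup cocycle bimodule model}. The proof of that proposition, applied to this action, then identifies the component with $\M_{G,K}(N_{g,\pi},\lambda_{g,\pi})$ after inducing from $N_g$ to $G\times K$.

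\textbf{Step 3 (independence of choices).} Changing $\mu$ or $\nu$ by coboundaries rescales the basis of $\widetilde A_g$, and this gives isomorphic graded algebras. Changing $g$ within its double coset conjugates all the data by elements of $G\times K$. Changing the intertwiners changes $\lambda$ by a coboundary, as already noted. Changing the orbit representative $\pi$ conjugates $(N_{g,\pi},[\lambda_{g,\pi}])$ within $N_g$.

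The main obstacle is Step~2's sign and convention bookkeeping. I have to verify that the cocycle produced by the equivariantization model, once transported through Dade's equivalence and the Shapiro identification, is $\lambda_{g,\pi}$ and not its inverse or a conjugate. This includes checking that the $\mu^{-1}$ convention in $A(L,\mu^{-1})$ matches the defining relation for $\lambda_{g,\pi}$. I would settle this by tracking a single simple object $\pi$ at the base point, exactly as in the proof of Proposition~\ref{prop: subgroup cocycle bimodule model}, and then invoking injectivity of $\Sh_x$.
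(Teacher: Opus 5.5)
Your skeleton matches the paper's proof. Both use the Galindo--Plavnik equivariantization of $\Vec_{X\times Y}$ by the middle copy $H_{\mathrm m}$ of $H$, index orbits by $p_2(L)\backslash H/p_1(M)$, and take $F_g$ as the stabilizer of $(x,gy)$ with Shapiro decoration $\eta_g$. Both decompose $\Irr(A_g)$ into $N_g$-orbits with stabilizer cocycle $\lambda_{g,\pi}$, use Proposition~\ref{prop: subgroup cocycle bimodule model} to name each block, and argue independence of choices the same way.

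The misstated step is the passage from the $g$-summand to the $N_g$-category $A_g\text{-}\mathbf{mod}$. You justify it by the equivalence between ungraded $\widetilde A_g$-modules and $N_g$-equivariant objects of $A_g\text{-}\mathbf{mod}$. That equivalence is true, but it identifies $\widetilde A_g\text{-}\mathbf{mod}$ with the equivariantization $(A_g\text{-}\mathbf{mod})^{N_g}$. That category carries a $\Rep(N_g)$-action, not an $N_g$-action. Its simple objects are pairs consisting of an orbit of $[\pi]$ and an irreducible projective representation of $N_{g,\pi}$ with cocycle class $[\lambda_{g,\pi}]^{\pm1}$. It is not the category induced from $N_g$ to $G\times K$, and using it would produce the wrong simple objects.

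What you need is Dade's theorem in its graded form. $N_g$-graded $\widetilde A_g$-modules are equivalent to $A_g$-modules by taking the degree-one component, and this turns the grading shift by $n$ into $(\widetilde A_g)_n\otimes_{A_g}-$. Only this version is consistent with your next sentence, which decomposes $A_g\text{-}\mathbf{mod}$ itself as an $N_g$-category.

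You also assert, rather than prove, that the $g$-summand is a category of graded $\widetilde A_g$-modules. Equivalently, you assume that the residual $N_g$-action on the $H_{\mathrm m}$-equivariant objects over one $H_{\mathrm m}$-orbit is the strongly graded action. The paper proves this directly:
\begin{itemize}
\item Evaluation at $(x,gy)$ identifies those equivariant objects with $A_g\text{-}\mathbf{mod}$.
\item For $n=(a,c)$ lifted to $f=(a,b,c)\in F_g$, the element $(a,1,c)$ moves $(x,gy)$ within its $H_{\mathrm m}$-orbit, and $(1,b,1)$ brings it back.
\item The transported fiber is therefore $\pi$ with $a'\in A_g$ acting by $\pi(u_f^{-1}a'u_f)$, which is $u_fA_g\otimes_{A_g}\pi=(\widetilde A_g)_n\otimes_{A_g}\pi$.
\item Multiplication supplies the composition maps $\vartheta_{n,m}$.
\end{itemize}
Once this is established, your sign concern reduces to matching these composition maps. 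The reason is that $\lambda_{g,\pi}$ is defined with the same convention as the stabilizer cocycle in the proof of Proposition~\ref{prop: subgroup cocycle bimodule model}.
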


\begin{proof}
Let $X$ and $Y$ be the transitive underlying $(G,H)$- and $(H,K)$-bisets,
with decorations represented by $\xi$ and $\eta$.  Put
\[
 W=X\times Y,
 \qquad
 P=G\times H\times K,
 \qquad
 H_{\mathrm m}=1\times H\times1,
 \qquad
 R=P/H_{\mathrm m}\cong G\times K,
\]
where
$(a,b,c)\cdot(x,y)=\bigl(axb^{-1},byc^{-1}\bigr)$.
The external-product cocycle on this action is
\[
 \Omega\bigl((a_1,b_1,c_1),(a_2,b_2,c_2);x,y\bigr)
 =\xi\bigl((a_1,b_1),(a_2,b_2);x\bigr)
  \eta\bigl((b_1,c_1),(b_2,c_2);y\bigr).
\]
By \cite[Theorem~7.15 and Remark~7.17(2)]{GP}, specialized to trivial
associators, there is an equivalence of $\Vec_G$--$\Vec_K$ bimodule
categories
\begin{equation}
\label{eqn: GP middle equivariantization}
 \M_{G,H}(X,[\xi])\boxtimes_{\Vec_H}\M_{H,K}(Y,[\eta])
 \simeq
 \bigl(\Vec_W,\Omega\bigr)^{H_{\mathrm m}},
\end{equation}
where the right-hand side is the equivariantization by $H_{\mathrm m}$ with its residual
$R$-action.

Choose base points $x\in X$ and $y\in Y$ with stabilizers $L$ and $M$.
The $R$-orbits of $H_{\mathrm m}$-orbits in $W$ are indexed by
$p_2(L)\backslash H/p_1(M)$.
For a representative $g$, use the point $(x,gy)$.
Identify $F_g=L\times_HM^g$ with a subgroup of $P$ by
$((a,b),(b,c))\mapsto(a,b,c)$.
Then $F_g$ is its stabilizer in $P$, $K_g=k_2(L)\cap k_1(M^g)$
is its $H_{\mathrm m}$-stabilizer, and $N_g=L*M^g$ is the
stabilizer in $R$ of its $H_{\mathrm m}$-orbit.
Recalling that $q_g\bigl((a,b),(b,c)\bigr)=(a,c)$, the stabilizer sequence is
\[
 1\longrightarrow K_g\longrightarrow F_g
 \xrightarrow{q_g}N_g\longrightarrow1.
\]
Restriction to $F_g$ followed by evaluation at $(x,gy)$ gives
\[
 \bigl[(f,f')\longmapsto\Omega(f,f';x,gy)\bigr]
 =\bigl[\operatorname{pr}_L^*\mu\,
        \operatorname{pr}_M^*\nu^g\bigr]
 =[\eta_g]\quad\text{in }H^2(F_g,k^\times).
\]
For this component, we use the representative $\eta_g$, after the
corresponding rescaling of equivariant structures.

The $H_{\mathrm m}$-stabilizer at $(x,gy)$ is $K_g$, with projective cocycle
$\alpha_g=\eta_g|_{K_g}$.  Evaluation at $(x,gy)$ sends an
$H_{\mathrm m}$-equivariant object supported on its orbit to a vector space
$V$ with operators $\rho(b)$, $b\in K_g$, satisfying
$\rho(b)\rho(b')=\alpha_g(b,b')\rho(bb')$.  Thus the fiber is a left
$A_g$-module, where $A_g=k_{\alpha_g}[K_g]$.  Conversely, the module
determines an equivariant object by transport along the transitive orbit.
Evaluation therefore gives an equivalence with
$A_g\text{-}\mathbf{mod}$.  Its simple objects are indexed by $\Irr(A_g)$.

Under this identification, the residual action of $N_g$ on
$A_g\text{-}\mathbf{mod}$ is given by the strongly graded algebra
$\widetilde A_g=k_{\eta_g}[F_g] =\bigoplus_{n\in N_g}(\widetilde A_g)_n$, with
$(\widetilde A_g)_1=A_g$.
Multiplication gives the bimodule isomorphisms
\[
 \vartheta_{n,m}:
 (\widetilde A_g)_n\otimes_{A_g}(\widetilde A_g)_m
 \xrightarrow{\sim}(\widetilde A_g)_{nm},
\]
and transport by $n$ is
$T_n(\pi)=(\widetilde A_g)_n\otimes_{A_g}\pi$.
To see this directly, let $f=(a,b,c)\in F_g$ lift $n=(a,c)$.  This lift
fixes $(x,gy)$, so
$(a,1,c)\cdot(x,gy)=(1,b^{-1},1)\cdot(x,gy)$.
Thus $(a,1,c)$ moves the point within its $H_{\mathrm m}$-orbit,
and $(1,b,1)$ brings it back; the equivariant structure supplies
the corresponding identification of fibers.  If $u_f$ is the twisted
group-basis unit, then $(\widetilde A_g)_n=u_fA_g$.  Under
$u_f\otimes v\leftrightarrow v$, an element $a\in A_g$ acts as
$\pi(u_f^{-1}au_f)$.  This is the transported action on the equivariant
fiber; multiplication supplies the composition maps $\vartheta_{n,m}$.
The homogeneous component is intrinsic: changing the lift or the chosen
unit changes this presentation by an isomorphic functor.

Over the $G\times K$-orbit of the chosen $H_{\mathrm m}$-orbit, the
simple-object set is isomorphic, as a $G\times K$-set, to
$(G\times K)\times_{N_g}\Irr(A_g)$.  Consequently, the $G\times K$-orbits
are indexed by $N_g\backslash\Irr(A_g)$.

Fix $[\pi]\in\Irr(A_g)$.  The point represented by $(1,[\pi])$ has
stabilizer
$N_{g,\pi} =\{n\in N_g\mid (\widetilde A_g)_n\otimes_{A_g}\pi\simeq\pi\}$.
Choose intertwiners $c_n:(\widetilde A_g)_n\otimes_{A_g}\pi
\xrightarrow{\sim}\pi$ for $n\in N_{g,\pi}$.
The equation
$c_n\circ(\id\otimes c_m) =\lambda_{g,\pi}(n,m)^{-1} c_{nm}\circ(\vartheta_{n,m}\otimes\id)$
says precisely that $\lambda_{g,\pi}$ is the stabilizer cocycle of the
simple object $\pi$ for the residual $G\times K$-action.  

Let $\mathcal C_{g,\pi}$ be the full $G\times K$-stable semisimple
subcategory generated by the orbit of $\pi$.  Its simple objects form the
transitive set
$(G\times K)/N_{g,\pi}$,
and the coherence at the base point is the preceding intertwiner equation.
Hence
$\mathcal C_{g,\pi} \simeq\M_{G,K}(N_{g,\pi},\lambda_{g,\pi})$
as $\Vec_G$--$\Vec_K$ bimodule categories by
Proposition~\ref{prop: subgroup cocycle bimodule model}.  Decomposition into
$G\times K$-orbits of simple objects gives
\[
 \bigl(\Vec_W,\Omega\bigr)^{H_{\mathrm m}}
 \simeq
 \bigoplus_{\substack{
  g\in p_2(L)\backslash H/p_1(M)\\
  [\pi]\in N_g\backslash\Irr(A_g)}}
 \M_{G,K}(N_{g,\pi},\lambda_{g,\pi}).
\]
Together with \eqref{eqn: GP middle equivariantization}, this proves
\eqref{eqn: categorical Clifford bimodule decomposition}.

Finally, write $g'=h_1gh_2$ with $h_1\in p_2(L)$ and
$h_2\in p_1(M)$, and choose $(a,h_1)\in L$ and $(h_2,c)\in M$.  Then
$(a,h_1,c^{-1})\cdot(x,gy)=(x,g'y)$.  Thus $F_{g'}$ is conjugate to $F_g$
in $P$ by $(a,h_1,c^{-1})$, while the output subgroup--cocycle pairs are
conjugated in $G\times K$ by $(a,c^{-1})$.  Replacing $[\pi]$ by
$n\cdot[\pi]$ changes its stabilizer to
$nN_{g,\pi}n^{-1}$ and transports its cocycle class by this conjugation.
Replacing
$c_n$ by $t_nc_n$ changes $\lambda_{g,\pi}$ by a coboundary, and changing
the input cocycles gives equivalent twisted actions.  Repeated conjugate
outputs are retained with their multiplicities.  This also proves the
asserted independence of choices.
\end{proof}

We define the \emph{Clifford composition} of basis elements by
\begin{equation}
\label{eqn: decorated double Burnside product}
 [L,\mu]\circ_H[M,\nu]
 =
 \sum_{g\in p_2(L)\backslash H/p_1(M)}
 \ \sum_{[\pi]\in N_g\backslash\Irr(A_g)}
 [N_{g,\pi},\lambda_{g,\pi}],
\end{equation}
and extend it bilinearly.

\begin{corollary}
\label{cor: decorated bisets and bimodule categories}
The compositions $\circ_H$ are associative and have identity
$[\Delta(G),1]$ at $G$.  They make the groups $\DBurn(G,H)$ into a based
$\mathbb Z$-linear category with
$\operatorname{Hom}(H,G)=\DBurn(G,H)$, and the maps
$\Phi_{G,H}:\DBurn(G,H) \xrightarrow{\ \sim\ } K_0\bigl(\mathbf{Bimod}(\Vec_G,\Vec_H)\bigr)$
intertwine Clifford composition with relative tensor product.
\end{corollary}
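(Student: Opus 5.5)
The plan is to transport every claim from the categorical side, where it is already known, back to $\DBurn$ along the isomorphisms $\Phi_{G,H}$ of \eqref{eq: decorated biset K0 identification}. By Theorem~\ref{thm: categorical Clifford composition} and additivity of $\boxtimes_{\Vec_H}$ in each variable, we have, for basis elements,
\[
 \Phi_{G,K}\bigl([L,\mu]\circ_H[M,\nu]\bigr)
 =\bigl[\M_{G,H}(L,\mu)\boxtimes_{\Vec_H}\M_{H,K}(M,\nu)\bigr].
\]
Since both sides are bilinear, the identity extends from basis elements to all of $\DBurn(G,H)\times\DBurn(H,K)$. The independence-of-choices clause of that theorem shows that $\circ_H$ is well defined on isomorphism classes. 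This gives the intertwining statement first. Relative tensor product descends to split Grothendieck groups because it preserves equivalences and direct sums (it is biexact and additive in each variable).

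Associativity then follows from the higher Morita theory \cite{ENO2,Gr}. There is an equivalence of $\Vec_G$--$\Vec_{K'}$ bimodule categories
\[
 (\M\boxtimes_{\Vec_H}\N)\boxtimes_{\Vec_K}\mathcal P\simeq
 \M\boxtimes_{\Vec_H}(\N\boxtimes_{\Vec_K}\mathcal P),
\]
so the two bracketings agree in $K_0$. Because the $\Phi$'s are injective, the corresponding identity holds in $\DBurn(G,K')$.

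For the unit, the regular bimodule $\Vec_G$ satisfies $\Vec_G\boxtimes_{\Vec_G}\M\simeq\M\simeq\M\boxtimes_{\Vec_H}\Vec_H$. It remains to identify $\Vec_G$ with $\M_{G,G}(\Delta(G),1)$. The simple objects of $\Vec_G$ form the biset $G$ with $(g,h)\cdot x=gxh^{-1}$. This biset is transitive, and the stabilizer of $1$ is $\Delta(G)$. The action is strict, so its decoration class is trivial, and Proposition~\ref{prop: subgroup cocycle bimodule model} gives the identification. As a cross-check, one could also verify $[\Delta(G),1]\circ_G[M,\nu]=[M,\nu]$ directly from \eqref{eqn: decorated double Burnside product}: there is a single double coset, and $K_g=1$ gives a trivial Clifford step.

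Finally, "based" means that composites of basis elements are nonnegative integer combinations of basis elements. This is immediate from \eqref{eqn: decorated double Burnside product}. The main obstacle is purely bookkeeping, namely convention consistency: the identification of $\Vec_G$ with $(\Delta(G),1)$ must use the orientation \eqref{eq: tensor equivalence biset convention}, and the relative tensor product must land in the stated bimodule category. I would check both by comparing biset stabilizers.
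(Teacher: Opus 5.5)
Your proposal is correct and follows essentially the same route as the paper: transport everything along the isomorphisms $\Phi_{G,H}$ using the categorical Clifford theorem and additivity, then import associativity and unitality of relative tensor product, with $\M_{G,G}(\Delta(G),1)\simeq\Vec_G$ identifying the unit. Your stabilizer computation for that last identification (the biset $G$ with base point $1$ has stabilizer $\Delta(G)$ and trivial decoration) fills in a step the paper only asserts.
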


\begin{proof}
By Theorem~\ref{thm: categorical Clifford composition} and additivity,
\[
 \Phi_{G,K}(x\circ_Hy)
 =\Phi_{G,H}(x)\boxtimes_{\Vec_H}\Phi_{H,K}(y),
 \qquad
 x\in\DBurn(G,H),\quad y\in\DBurn(H,K).
\]
Associativity therefore follows from associativity of
relative tensor product; see \cite[Proposition~4.4]{Gr}.
The regular bimodule category $\Vec_G$ is its unit by
\cite[Proposition~3.15]{Gr}, and
$\M_{G,G}(\Delta(G),1)\simeq\Vec_G$.
Since the maps $\Phi_{G,H}$ are isomorphisms, this gives associativity
and the stated identities for Clifford composition.
\end{proof}

\begin{corollary}
\label{cor: decorated bisets Drinfeld double}
For every finite group $G$, there is a canonical ring isomorphism
$K^\mmod\bigl(\Rep(D(G))\bigr)\cong\DBurn(G,G)$,
where $K^\mmod(\B)$ denotes the Grothendieck ring of finite semisimple
module categories over a braided fusion category $\B$.
\end{corollary}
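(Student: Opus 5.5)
The plan is to compose three identifications, each already available, and to check that the composite respects products and the unit. First, by definition \eqref{eq: definition RG}, $K^\mmod(\Rep(D(G)))=K_0(\mathbf{Mod}(\Rep(D(G))))$, with multiplication induced by the relative tensor product $\boxtimes_{\Rep(D(G))}$. Here we use that $\Rep(D(G))$ is braided, so a left module category is automatically a bimodule category. Second, we use the braided equivalence $\Rep(D(G))\simeq\Z(\Vec_G)$. Third, we invoke the monoidal $2$-equivalence between $\Z(\mathcal C)$-module categories and $\mathcal C$-bimodule categories, \cite[Corollary~2.2]{O} and \cite[Equation~(17) and Proposition~3.11]{ENO2}, with $\mathcal C=\Vec_G$. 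This identifies $\mathbf{Mod}(\Z(\Vec_G))$ with $\mathbf{Bimod}(\Vec_G)$, and $\boxtimes_{\Z(\Vec_G)}$ with $\boxtimes_{\Vec_G}$.

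The next step is to pass to Grothendieck groups. A $2$-equivalence preserves direct sums and indecomposability, so it induces an isomorphism of split Grothendieck groups
\[
 K_0(\mathbf{Mod}(\Z(\Vec_G)))\cong K_0(\mathbf{Bimod}(\Vec_G))
\]
that sends indecomposables to indecomposables. Since the $2$-equivalence is monoidal, this isomorphism is multiplicative and carries the unit $\Z(\Vec_G)$, the regular module, to the regular bimodule $\Vec_G$. We then compose with $\Phi_{G,G}^{-1}$ from \eqref{eq: decorated biset K0 identification}. By Corollary~\ref{cor: decorated bisets and bimodule categories}, $\Phi_{G,G}$ intertwines Clifford composition $\circ_G$ with $\boxtimes_{\Vec_G}$ and sends $[\Delta(G),1]$ to $[\Vec_G]$. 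The composite is therefore a ring isomorphism, and it is based because each step matches distinguished bases.

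For canonicity, the equivalence $\Z(\Vec_G)\simeq\Rep(D(G))$ and the Ostrik--ENO equivalence are canonical up to equivalence, and $\Phi_{G,G}$ was shown to be independent of auxiliary choices. The induced map on $K_0$ is therefore independent of choices.

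The main point needing care is the monoidality of the Ostrik--ENO $2$-equivalence with respect to the two relative tensor products, and its compatibility with the order-of-factors convention. I would rely on the cited \cite[Proposition~3.11]{ENO2} for this. Conventions could produce an anti-isomorphism instead of an isomorphism. If so, I would compose with the duality $[L,\mu]\mapsto[L^{\op},\mu^{\op}]$, where $L^{\op}$ is $L$ with its two coordinates swapped. This duality is itself an anti-automorphism of $\DBurn(G,G)$, being induced by bimodule opposite, so the composite is again an isomorphism of based rings.
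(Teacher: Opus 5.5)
Your proposal is correct and follows essentially the same route as the paper: identify $\Rep(D(G))$-module categories with $\Vec_G$-bimodule categories by a monoidal $2$-equivalence, pass to split Grothendieck rings, and compose with $\Phi_{G,G}$ from Corollary~\ref{cor: decorated bisets and bimodule categories}. The paper uses Greenough's explicit functor $\mathcal M\mapsto\operatorname{Fun}_{\mathcal C\mid\mathcal C}(\mathcal C,\mathcal M)$, whose stated compatibility $\operatorname{Fun}_{\mathcal C\mid\mathcal C}(\mathcal C,\mathcal M\boxtimes_{\mathcal C}\mathcal N)\simeq\operatorname{Fun}_{\mathcal C\mid\mathcal C}(\mathcal C,\mathcal M)\boxtimes_{\Z(\mathcal C)}\operatorname{Fun}_{\mathcal C\mid\mathcal C}(\mathcal C,\mathcal N)$ already fixes the order of factors, so your anti-isomorphism fallback is not needed.
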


\begin{proof}
Put $\mathcal C=\Vec_G$.  The assignment
$\M\longmapsto\Fun_{\mathcal C\mid\mathcal C}(\mathcal C,\M)$, with
bimodule functors and bimodule natural transformations, gives the monoidal
$2$-equivalence by \cite[Proposition~7.12 and Theorem~7.14]{Gr}.  Its
tensor-product compatibility is
\[
 \Fun_{\mathcal C\mid\mathcal C}
  (\mathcal C,\M\boxtimes_{\mathcal C}\N)
 \simeq
 \Fun_{\mathcal C\mid\mathcal C}(\mathcal C,\M)
 \boxtimes_{\Z(\mathcal C)}
 \Fun_{\mathcal C\mid\mathcal C}(\mathcal C,\N).
\]
Consequently, the basis-preserving map from $\DBurn(G,G)$ sends $[L,\mu]$ to
the class of
\[
 \Fun_{\mathcal C\mid\mathcal C}(\mathcal C,\M_{G,G}(L,\mu))
\]
and preserves products by the preceding corollary.  Transporting along
$\Z(\Vec_G)\simeq_{\mathrm{br}}\Rep(D(G))$ finishes the proof.
\end{proof}

\begin{corollary}
\label{cor: Clifford formula double Burnside ring}
For every finite group $G$, the group $\DBurn(G,G)$ is a based ring with
basis indexed by the $(G\times G)$-conjugacy classes of pairs
$(L,[\mu])$, where $L\leq G\times G$ and $[\mu]\in H^2(L,k^\times)$.
Its unit is $[\Delta(G),1]$, and the product of basis elements is
\begin{equation}
\label{eqn: endomorphism decorated double Burnside product}
 [L,\mu]\,[M,\nu]
 =
 \sum_{g\in p_2(L)\backslash G/p_1(M)}
 \ \sum_{[\pi]\in N_g\backslash\Irr(A_g)}
 [N_{g,\pi},\lambda_{g,\pi}].
\end{equation}
Under $\Phi_{G,G}$, this is the multiplication of
$K_0(\mathbf{Bimod}(\Vec_G))$:
\begin{equation}
\label{eqn: K0 Bimod Clifford product}
 [\M(L,\mu)]\,[\M(M,\nu)]
 =
 \sum_{g\in p_2(L)\backslash G/p_1(M)}
 \ \sum_{[\pi]\in N_g\backslash\Irr(A_g)}
 [\M(N_{g,\pi},\lambda_{g,\pi})].
\end{equation}
\end{corollary}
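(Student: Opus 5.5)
This corollary is the specialization $G=H=K$ of the preceding results, so the plan is to assemble it from Corollary~\ref{cor: decorated bisets and bimodule categories} and Theorem~\ref{thm: categorical Clifford composition} rather than to prove anything new. First, the description of the basis comes from the definition of $\DBurn(G,G)$. Transitive decorated $(G,G)$-bisets are parametrized by $(G\times G)$-conjugacy classes of pairs $(L,[\mu])$ with $L\leq G\times G$ and $[\mu]\in H^2(L,k^\times)$, via the Shapiro isomorphism \eqref{eq: Shapiro decorated biset}. Since every decorated biset decomposes uniquely into transitive ones, these classes form a $\mathbb Z$-basis of the Grothendieck group.

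Next, I take $\circ_G$ as the multiplication. Corollary~\ref{cor: decorated bisets and bimodule categories}, applied with all three groups equal to $G$, shows that $\circ_G$ is associative with two-sided identity $[\Delta(G),1]$. The endomorphism ring of the object $G$ in the based $\mathbb Z$-linear category is therefore a ring. Formula \eqref{eqn: endomorphism decorated double Burnside product} is the defining formula \eqref{eqn: decorated double Burnside product} with $H=K=G$. In particular, the double cosets $p_2(L)\backslash G/p_1(M)$ and the Clifford data $N_g$, $A_g$, $N_{g,\pi}$, $\lambda_{g,\pi}$ are those of Subsection~\ref{subsect: definition Clifford composition}.

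For the ring to be \emph{based}, I need two things. The product of basis elements must be a nonnegative integer combination of basis elements, which is immediate since the right-hand side is a sum of basis classes counted with multiplicity. The unit must be a basis element, which holds because $[\Delta(G),1]$ is one. If a duality or involution is also expected as part of being based, I would defer that to the bimodule duality introduced later and not claim it here.

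Finally, \eqref{eqn: K0 Bimod Clifford product} follows by applying $\Phi_{G,G}$. By \eqref{eq: decorated biset K0 identification} it sends $[L,\mu]$ to $[\M(L,\mu)]$, and by Corollary~\ref{cor: decorated bisets and bimodule categories} it intertwines $\circ_G$ with $\boxtimes_{\Vec_G}$. Equivalently, one can take the classes of both sides of \eqref{eqn: categorical Clifford bimodule decomposition} in $K_0$ and use additivity \eqref{eq: linearization orbit decomposition}.

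The only point needing care is well-definedness: the sum must not depend on the chosen representatives, and conjugate outputs must be counted with multiplicity. This is already covered by the independence clause of Theorem~\ref{thm: categorical Clifford composition}, so I expect no real obstacle.
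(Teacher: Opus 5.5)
Your proposal is correct and follows the paper's proof, which simply specializes Theorem~\ref{thm: categorical Clifford composition} and Corollary~\ref{cor: decorated bisets and bimodule categories} to $G=H=K$. Your remarks on the basis (Shapiro's lemma with unique transitive decomposition), nonnegative structure constants, and independence of choices only spell out what those two results already contain.
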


\begin{proof}
This is Theorem~\ref{thm: categorical Clifford composition} and
Corollary~\ref{cor: decorated bisets and bimodule categories}, specialized
to $G=H=K$.
\end{proof}

For a cocycle $\alpha$ and commuting elements $x,y$ of its group, put
$\Alt_\alpha(x,y)=\alpha(x,y)\alpha(y,x)^{-1}$.

\begin{lemma}
\label{lem: twisted overlap simple modules}
Let $C$ be a finite abelian group, let $\alpha\in Z^2(C,k^\times)$,
and put $\mathcal R=\Rad(\Alt_\alpha)$.  Choosing a cochain
trivializing $\alpha|_{\mathcal R}$ identifies $\Irr(k_\alpha[C])$
with $\widehat{\mathcal R}$.  The natural action of $\widehat C$ on
$\Irr(k_\alpha[C])$ is transitive, and the stabilizer of every simple
module consists of the characters trivial on $\mathcal R$.
\end{lemma}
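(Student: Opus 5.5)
The plan is to use the standard structure theory of twisted group algebras of abelian groups. First, recall that $\Alt_\alpha:C\times C\to k^\times$ is a well-defined alternating bicharacter depending only on $[\alpha]$. Its radical $\mathcal R$ is the set of $r\in C$ whose twisted basis element $u_r$ commutes with every $u_c$: indeed $u_ru_c=\Alt_\alpha(r,c)u_cu_r$. Hence $\operatorname{span}\{u_r\mid r\in\mathcal R\}=k_{\alpha|_{\mathcal R}}[\mathcal R]$ is central in $k_\alpha[C]$, and I would check that it is the full center. A central element $\sum_c a_cu_c$ must have $a_c=0$ whenever $\Alt_\alpha(c,-)\neq1$, by comparing coefficients after conjugating by the $u_d$. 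Since $\Alt_\alpha|_{\mathcal R\times\mathcal R}=1$ and $k$ is algebraically closed, $\alpha|_{\mathcal R}$ is symmetric and therefore a coboundary on the abelian group $\mathcal R$. Choosing a trivializing cochain $t$ identifies the center with $k[\mathcal R]$, whose minimal idempotents $e_\chi$ are indexed by $\chi\in\widehat{\mathcal R}$.

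Since $k_\alpha[C]$ is semisimple (Maschke, characteristic zero), its simple modules correspond bijectively to the primitive central idempotents $e_\chi$. This gives the identification $\Irr(k_\alpha[C])\cong\widehat{\mathcal R}$: a simple module $\pi$ corresponds to the central character by which $t(r)^{-1}u_r$ acts. As a byproduct, each block $k_\alpha[C]e_\chi$ is central simple of dimension $[C:\mathcal R]$.

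Next, describe the action of $\widehat C$. The natural action is by twisting: $\psi\in\widehat C$ induces the algebra automorphism $u_c\mapsto\psi(c)u_c$, and $\psi\cdot\pi$ is $\pi$ composed with this automorphism (or its inverse, depending on convention; the stabilizer statement is insensitive to this). On central characters this sends $\chi$ to $\chi\cdot\psi|_{\mathcal R}^{\pm1}$. Therefore $\psi$ fixes $[\pi]$ exactly when $\psi|_{\mathcal R}=1$, which is the claimed stabilizer. Transitivity follows because restriction $\widehat C\to\widehat{\mathcal R}$ is surjective, by divisibility of $k^\times$ and duality for finite abelian groups.

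The only step needing care is the center computation, together with checking that the twisting action is compatible with the chosen trivialization. The twist multiplies $t(r)^{-1}u_r$ by $\psi(r)$ independently of $t$, so this should be routine.
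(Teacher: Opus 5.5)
Your proposal is correct and follows the paper's proof essentially step for step. You identify the center with $\bigoplus_{r\in\mathcal R}ku_r$, rescale by a trivializing cochain to get $k[\mathcal R]$, index simples by $\widehat{\mathcal R}$ via semisimplicity, note that twisting by $\psi\in\widehat C$ multiplies the central character by $\psi|_{\mathcal R}$, and conclude from surjectivity of $\widehat C\to\widehat{\mathcal R}$, while also spelling out two points the paper leaves implicit: the coefficient comparison giving the full center, and why $\alpha|_{\mathcal R}$ is a coboundary.
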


\begin{proof}
The relation $u_cu_d=\Alt_\alpha(c,d)u_du_c$ shows that the center
of $k_\alpha[C]$ is $\bigoplus_{r\in\mathcal R}ku_r$.
After choosing a cochain trivializing $\alpha|_{\mathcal R}$,
rescaling these basis elements identifies the center with
$k[\mathcal R]$.  Since $k_\alpha[C]$ is semisimple, its simple
modules are indexed by $\widehat{\mathcal R}$.  Pullback along
$u_c\mapsto\chi(c)u_c$, for $\chi\in\widehat C$, multiplies the
central character by $\chi|_{\mathcal R}$.  Every character of
$\mathcal R$ extends to $C$, proving the assertions.
\end{proof}

\begin{corollary}
\label{cor: abelian Clifford formula}
Let $G,H,K$ be finite abelian groups, and let $(L,[\mu])$ and
$(M,[\nu])$ be decorated subgroups of $G\times H$ and $H\times K$,
respectively.  Choose normalized representatives $\mu$ and $\nu$, and put
$K_1=k_2(L)\cap k_1(M)$, $N_1=L*M$, and $A_1=k_{\alpha_1}[K_1]$,
where
$\alpha_1(b,b') =\mu\bigl((1,b),(1,b')\bigr) \nu\bigl((b,1),(b',1)\bigr)$.
For $[\pi]\in\Irr(A_1)$, let
$N=N_{1,\pi}$ be the stabilizer of $[\pi]$ in $N_1$ and let
$[\lambda]=[\lambda_{1,\pi}]$ be its associated cocycle class, as in
Subsection~\ref{subsect: definition Clifford composition}.  The resulting
basis element $[N,\lambda]$ is independent of $[\pi]$, and
\begin{equation}
\label{eqn: abelian Clifford formula}
 [L,\mu]\circ_H[M,\nu]
 = [H:p_2(L)p_1(M)]\,
   \bigl|N_1\backslash\Irr(A_1)\bigr|\,[N,\lambda],
\end{equation}
so the product of two basis elements is monomial.
\end{corollary}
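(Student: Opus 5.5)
We specialize the Clifford formula \eqref{eqn: decorated double Burnside product} to abelian groups. The argument has three steps: show that all double-coset summands coincide, show that all orbit summands coincide, and then count the summands.

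\textbf{Double-coset summands.} Since $H$ is abelian, $p_2(L)\backslash H/p_1(M)=H/p_2(L)p_1(M)$, so there are $[H:p_2(L)p_1(M)]$ summands. Conjugation by $(g,1)$ is trivial on the abelian group $H\times K$. Hence $M^g=M$ and $\nu^g=\nu$ for every representative $g$. The data $F_g,K_g,N_g,\eta_g,\alpha_g$ are therefore literally equal to $F_1,K_1,N_1,\eta_1,\alpha_1$. Consequently each double coset contributes the same multiset of basis elements, namely
\[
 \sum_{[\pi]\in N_1\backslash\Irr(A_1)}[N_{1,\pi},\lambda_{1,\pi}].
\]
By definition, $\alpha_1=\eta_1|_{K_1}$, and evaluating $\operatorname{pr}_L^*\mu\,\operatorname{pr}_M^*\nu$ on $((1,b),(b,1))$ gives exactly the displayed formula for $\alpha_1$.

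\textbf{Orbit summands.} It remains to show that $[N_{1,\pi},\lambda_{1,\pi}]$ is independent of $[\pi]$. Then the inner sum is $|N_1\backslash\Irr(A_1)|$ copies of one element $[N,\lambda]$, and the product is monomial. I will use Lemma~\ref{lem: twisted overlap simple modules} with $C=K_1$. Every simple $A_1$-module has the form $\pi^\chi$, the pullback of $\pi$ along $u_c\mapsto\chi(c)u_c$ for some $\chi\in\widehat{K_1}$. Since $F_1$ is abelian, $\chi$ extends to a character $\widetilde\chi$ of $F_1$. Twisting by $\widetilde\chi$, i.e. $u_f\mapsto\widetilde\chi(f)u_f$, is a graded algebra automorphism $\sigma$ of $\widetilde A_1=k_{\eta_1}[F_1]$ that preserves each homogeneous component $(\widetilde A_1)_n$ and restricts to the $\chi$-twist on $A_1$. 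Transport of structure along $\sigma$ gives isomorphisms
\[
 \bigl((\widetilde A_1)_n\otimes_{A_1}\pi\bigr)^{\sigma}\cong(\widetilde A_1)_n\otimes_{A_1}\pi^\chi,
\]
and these are compatible with the maps $\vartheta_{n,m}$, because $\sigma$ is multiplicative. It follows that $n$ fixes $[\pi]$ if and only if it fixes $[\pi^\chi]$, so $N_{1,\pi}=N_{1,\pi^\chi}$. Moreover, the intertwiners $c_n$ for $\pi$ transport to intertwiners for $\pi^\chi$ satisfying the same coherence equation with the same scalars, so $\lambda_{1,\pi}=\lambda_{1,\pi^\chi}$ on the nose.

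\textbf{Main obstacle.} The delicate point is this transport-of-structure check. One must verify that the identification $\sigma$ is $A_1$-balanced and commutes with $\vartheta_{n,m}$, so that the stabilizer cocycle defined in Subsection~\ref{subsect: definition Clifford composition} is unchanged rather than merely cohomologous up to something nontrivial. This works because $\sigma$ is a graded algebra automorphism fixing the grading by $N_1$. It also uses that the class of $\lambda$ is intrinsic, by independence of intertwiners, so no choices interfere. With the two independence statements in hand, the three steps combine to give \eqref{eqn: abelian Clifford formula}.
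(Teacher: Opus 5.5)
Your proposal is correct and follows the paper's proof essentially step for step. You identify the double cosets with the cosets of $p_2(L)p_1(M)$, which carry identical Clifford data. You then extend a character of $K_1$ to the abelian group $F_1$, obtaining a grading-preserving automorphism of $k_{\eta_1}[F_1]$ that transports stabilizers and stabilizer cocycles between all simple $A_1$-modules. Your transport-of-structure check is slightly more explicit than the paper's: the paper records only equality of cocycle classes, while you note the cocycles agree on the nose for transported intertwiners.
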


\begin{proof}
Since $H$ is abelian, the double cosets are the cosets of
$p_2(L)p_1(M)$, and conjugation by a representative does not change any of
the Clifford data.  Thus the double-coset sum in
\eqref{eqn: decorated double Burnside product} contributes the factor
$[H:p_2(L)p_1(M)]$.

To compare the simple $A_1$-modules and their $N_1$-orbits, recall that
$F_1=L\times_HM$ and
$\widetilde A_1=k_{\operatorname{pr}_L^*\mu\,
\operatorname{pr}_M^*\nu}[F_1]$; this is the strongly $N_1$-graded
algebra whose degree-one part is $A_1$.  The group $F_1$ is abelian.
The character group $\widehat K_1$ acts
transitively on $\Irr(A_1)$ by tensor product; see
Lemma~\ref{lem: twisted overlap simple modules}.  Every
character of $K_1$ extends to $F_1$.  If $\widetilde\chi$ is such an
extension, the grading-preserving automorphism
$u_f\longmapsto\widetilde\chi(f)u_f$ for $f\in F_1$ induces the character
twist $\pi\mapsto\chi\otimes\pi$ on $A_1$-modules and is compatible with
the composition isomorphisms of the $N_1$-action.  Hence
$N_{1,\chi\otimes\pi}=N_{1,\pi}$ and
$[\lambda_{1,\chi\otimes\pi}]=[\lambda_{1,\pi}]$, where the latter equality
is in the cohomology of this common stabilizer.  Thus all $N_1$-orbits in
$\Irr(A_1)$ give the same decorated basis element.  Counting the orbits gives
\eqref{eqn: abelian Clifford formula}.
\end{proof}

\begin{remark}
For tensor products of pointed bimodule categories, see
\cite[Proposition~3.19]{ENO2}; \cite[Theorem~10.4]{ENO2} gives a description
using Lagrangian correspondences.
\end{remark}

\begin{example}
\label{ex: decorated double Burnside C2}
Let $C_2=\langle s\rangle$ and put $V=C_2\times C_2$.  The cyclic
subgroups have trivial second cohomology with coefficients in $k^\times$,
whereas
$H^2(V,k^\times)\cong C_2$.
Let $[\beta]$ be its nontrivial class.  We may choose the normalized
representative
$\beta\bigl((s^a,s^b),(s^{a'},s^{b'})\bigr)=(-1)^{ab'}$ for
$a,b,a',b'\in\mathbb F_2$,
whose alternating bicharacter is
$\Alt_\beta\bigl((s^a,s^b),(s^{a'},s^{b'})\bigr) =(-1)^{ab'-a'b}$.
The six basis elements of $\DBurn(C_2,C_2)$ are
\[
\begin{gathered}
 \mathbf 1=[\Delta(C_2),1],\qquad
 \omega=[V,\beta],\qquad
 q=[V,1],\\
 \ell=[C_2\times1,1],\qquad
 r=[1\times C_2,1],\qquad
 e=[1,1].
\end{gathered}
\]
With rows acting on the left and columns on the right, formula
\eqref{eqn: endomorphism decorated double Burnside product} gives
\[
\renewcommand{\arraystretch}{1.18}
\begin{array}{c|cccccc}
 \cdot & \mathbf 1 & \omega & q & \ell & r & e \\ \hline
 \mathbf 1 & \mathbf 1 & \omega & q & \ell & r & e \\
 \omega    & \omega    & \mathbf 1 & r & e & q & \ell \\
 q         & q         & \ell & 2q & 2\ell & q & \ell \\
 \ell      & \ell      & q & q & \ell & 2q & 2\ell \\
 r         & r         & e & 2r & 2e & r & e \\
 e         & e         & r & r & e & 2r & 2e
\end{array}
\]
Two entries illustrate the Clifford mechanism.  First consider $q^2$.
There is one double coset, and
$F_1\cong C_2^3$, $K_1\cong C_2$, $N_1\cong V$, and
$A_1=k[K_1]\cong k\oplus k$.
The action of $N_1$ on the two simple $A_1$-modules is trivial.  Each simple
module therefore forms a singleton orbit, with stabilizer $N_1$ and
trivial cocycle class.  Both orbits produce the same output $q$, so
$q^2=2q$.

For $\omega^2$ the groups $F_1$, $K_1$, and $N_1$ are the same, and the
restriction of the product cocycle to $K_1$ is again trivial; hence
$A_1\cong k[C_2]$ still has the two simples
$\chi_t(s^b)=(-1)^{tb}$ for $t\in\mathbb F_2$.
The difference is the residual action.  The homogeneous component of degree
$(s^a,s^c)\in N_1=V$ sends $\chi_t$ to
$\chi_{t+a+c}$.
Consequently $N_1$ acts transitively on the two simples.  The stabilizer of a
simple-module class is $\Delta(C_2)$, and its cocycle class is trivial.
There is therefore a single summand and
$\omega^2=\mathbf 1$.
The element $\omega$ exchanges the regular and rank-one
$\Vec_{C_2}$-module categories under the one-sided action; see
Example~\ref{ex: one-sided decorated Burnside C2 action}.

The table also displays a feature absent from the one-sided cohomological
Burnside ring: the double ring is noncommutative.  Indeed,
$q\omega=\ell$ and $\omega q=r$.
By Theorem~\ref{thm: categorical Clifford composition}, the same table is the
multiplication table of $K_0(\mathbf{Bimod}(\Vec_{C_2}))$ after replacing
each symbol $[L,\mu]$ by $[\M(L,\mu)]$.

For $G=S_3$ the rank is already $28$.  Ostrik enumerated the corresponding
indecomposable module categories over
$\Rep(D(S_3))\simeq\Z(\Vec_{S_3})$ by listing the conjugacy classes
of subgroups of $S_3\times S_3$ together with their cohomology classes; see
\cite[Sections~4.1--4.2]{O}.  A complete multiplication table was subsequently
computed by Clark and Davydov \cite{ClarkDavydov}.  The $C_2$ case above is
the smallest example in which the Clifford correction is visible.
\end{example}

\subsection{The action on one-sided decorated Burnside rings and module categories}
\label{sect: one-sided action}

Since $\Vec_1=\Vec$, a $(\Vec_G,\Vec_1)$-bimodule category is simply a
left $\Vec_G$-module category.  Indecomposable such categories are
parametrized by conjugacy classes of pairs $(H,[\alpha])$, where $H\leq G$
and $[\alpha]\in H^2(H,k^\times)$; see Example~2.1 of \cite{O}.  For a normalized
cocycle $\alpha\in Z^2(H,k^\times)$, write
\[
 \M(H,\alpha):=\M_{G,1}(H\times1,\alpha)
 =\operatorname{RMod}_{A(H,\alpha^{-1})}(\Vec_G),
 \qquad
 A(H,\alpha^{-1})=\bigoplus_{h\in H}\delta_h,
\]
where multiplication in $A(H,\alpha^{-1})$ is twisted by $\alpha^{-1}$,
in accordance with the convention in
\eqref{eq: subgroup cocycle algebra object}.  Put
$\OneBurn(G) :=\bigoplus_{[H,\alpha]}\mathbb Z[H,\alpha]$.
The identification $\OneBurn(G)\xrightarrow{\ \sim\ }\DBurn(G,1)$ given by
$[H,\alpha]\longmapsto[H\times1,\alpha]$,
and Corollary~\ref{cor: decorated bisets and bimodule categories} give the
basis-preserving isomorphism
\[
 \Phi_\Omega:=\Phi_{G,1}:\OneBurn(G)
 \xrightarrow{\ \sim\ }
 K_0\bigl(\mathbf{Mod}(\Vec_G)\bigr),
 \qquad
 [H,\alpha]\longmapsto[\M(H,\alpha)].
\]
Write
$\Phi_B:=\Phi_{G,G}:\DBurn(G,G) \xrightarrow{\ \sim\ } K_0\bigl(\mathbf{Bimod}(\Vec_G)\bigr)$
for the ring isomorphism of
Corollary~\ref{cor: decorated bisets and bimodule categories}.
Under the same identification the natural action is the specialization of
Clifford composition $\DBurn(G,G)\times \DBurn(G,1) \longrightarrow
\DBurn(G,1)$, where $x\triangleright y:=x\circ_G y$.
Thus the one-sided formula is the specialization of
\eqref{eqn: decorated double Burnside product} to
$(G,H,K)=(G,G,1)$.

We record the specialized group-theoretic data explicitly.  Let
$(L,[\mu])$ be a cocycle-decorated subgroup of $G\times G$, let
$(H,[\alpha])$ be a cocycle-decorated subgroup of $G$, and choose normalized
representatives $\mu$ and $\alpha$.  For
$g\in p_2(L)\backslash G/H$,
the fiber product from Subsection~\ref{subsect: definition Clifford composition}
identifies with
$F_g=\{(a,b)\in L\mid b\in{}^gH\}$.
Under this identification the exact sequence and product cocycle are
\[
 1\longrightarrow K_g\longrightarrow F_g
 \xrightarrow{\ q_g\ }N_g\longrightarrow1,
 \qquad q_g(a,b)=a,
\]
where $K_g\cong k_2(L)\cap{}^gH$ and
$N_g=p_1\bigl(L\cap(G\times{}^gH)\bigr)\leq G$,
and
$\eta_g=\mu|_{F_g}\,\operatorname{pr}_2^*({}^g\alpha) \in Z^2(F_g,k^\times)$.
Let
$A_g:=k_{\eta_g|_{K_g}}[K_g]$,
and, for $[\pi]\in\Irr(A_g)$, let $N_{g,\pi}\leq N_g$ be its stabilizer and
$\lambda_{g,\pi}\in Z^2(N_{g,\pi},k^\times)$ its stabilizer cocycle
obtained from the strongly $N_g$-graded algebra
$k_{\eta_g}[F_g]$ exactly as in Subsection~\ref{subsect: definition Clifford composition}.

\begin{corollary}
\label{thm: one-sided Clifford action}
With the notation above, the following hold.
\begin{enumerate}
\item[\textup{(i)}] \textbf{Burnside form.}
For basis elements of $\DBurn(G,G)$ and $\OneBurn(G)$,
\begin{equation}
\label{eqn: one-sided Clifford action formula}
 [L,\mu]\triangleright[H,\alpha]
 =
 \sum_{g\in p_2(L)\backslash G/H}
 \ \sum_{[\pi]\in N_g\backslash\Irr(A_g)}
 [N_{g,\pi},\lambda_{g,\pi}].
\end{equation}
The second sum is over representatives of the $N_g$-orbits, and conjugate
output pairs are combined with multiplicity.  Formula
\eqref{eqn: one-sided Clifford action formula} is independent of all choices
and makes $\OneBurn(G)$ a unital left $\DBurn(G,G)$-module.

\item[\textup{(ii)}] \textbf{Categorical form.}
The basis-preserving isomorphism
$\Phi_\Omega:\OneBurn(G) \xrightarrow{\ \sim\ } K_0\bigl(\mathbf{Mod}(\Vec_G)\bigr)$
and the ring isomorphism $\Phi_B$ identify the Burnside action with the
categorical action.  Explicitly,
\[
 \Phi_\Omega(x\triangleright y)
 =\Phi_B(x)\bt_{\Vec_G}\Phi_\Omega(y),
 \qquad
 x\in \DBurn(G,G),\quad y\in\OneBurn(G),
\]
and, for basis elements,
\[
 \M(L,\mu)\bt_{\Vec_G}\M(H,\alpha)
 \simeq
 \bigoplus_{\substack{
 g\in p_2(L)\backslash G/H\\
 [\pi]\in N_g\backslash\Irr(A_g)}}
 \M(N_{g,\pi},\lambda_{g,\pi}).
\]
\end{enumerate}
\end{corollary}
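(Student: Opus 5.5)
This corollary is a specialization of Theorem~\ref{thm: categorical Clifford composition} and Corollary~\ref{cor: decorated bisets and bimodule categories} to the triple $(G,H,K)=(G,G,1)$. The proof therefore reduces to translating the general Clifford data of Subsection~\ref{subsect: definition Clifford composition} into the one-sided coordinates listed before the statement. The plan is to first identify $\OneBurn(G)$ with $\DBurn(G,1)$ via $[H,\alpha]\mapsto[H\times1,\alpha]$. One checks this is a bijection on bases: $G\times1$-conjugacy of pairs $(H\times1,[\alpha])$ is $G$-conjugacy of $(H,[\alpha])$, and every subgroup of $G\times1$ has this form. Then $x\triangleright y:=x\circ_G y$ is defined through this identification.

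Next I would compute the data for $M=H\times1\leq G\times1$ and $\nu=\alpha$. Here $p_1(M)=H$, so the double cosets are $p_2(L)\backslash G/H$, and $M^g={}^gH\times1$ with $\nu^g={}^g\alpha$. The fiber product $F_g=L\times_G M^g$ consists of pairs $((a,b),(b,1))$ with $(a,b)\in L$ and $b\in{}^gH$. Projection to the first factor identifies it with $\{(a,b)\in L\mid b\in{}^gH\}$. Under this identification $q_g$ becomes $(a,b)\mapsto a$, viewed in $G\times1\cong G$. Its image is $N_g=p_1(L\cap(G\times{}^gH))$, and its kernel is $K_g=k_2(L)\cap k_1(M^g)=k_2(L)\cap{}^gH$. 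The product cocycle $\operatorname{pr}_L^*\mu\,\operatorname{pr}_M^*\nu^g$ becomes $\mu|_{F_g}\operatorname{pr}_2^*({}^g\alpha)$. Hence $A_g$, the strongly graded algebra $k_{\eta_g}[F_g]$, the stabilizers $N_{g,\pi}$ and the cocycles $\lambda_{g,\pi}$ coincide with those in the general construction. Finally, output pairs $(N_{g,\pi}\times1,\lambda_{g,\pi})$ correspond to $[N_{g,\pi},\lambda_{g,\pi}]$ in $\OneBurn(G)$. Formula \eqref{eqn: one-sided Clifford action formula} is then exactly \eqref{eqn: decorated double Burnside product}.

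Part (i) then follows from the established properties of composition. Independence of choices is the independence clause of Theorem~\ref{thm: categorical Clifford composition}. The module axioms come from Corollary~\ref{cor: decorated bisets and bimodule categories}: associativity of $\circ$ across $G,G,G,1$ gives $(xy)\triangleright z=x\triangleright(y\triangleright z)$. The identity $[\Delta(G),1]$ acts as the identity on $\DBurn(G,1)$ since it is the identity morphism at $G$.

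Part (ii) is the intertwining statement $\Phi_{G,1}(x\circ_Gy)=\Phi_{G,G}(x)\boxtimes_{\Vec_G}\Phi_{G,1}(y)$ from the proof of Corollary~\ref{cor: decorated bisets and bimodule categories}. The displayed equivalence is \eqref{eqn: categorical Clifford bimodule decomposition} with $K=1$, using $\M(H,\alpha)=\M_{G,1}(H\times1,\alpha)$ and the fact that $\Vec_G$--$\Vec_1$ bimodule categories are left $\Vec_G$-module categories.

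The only step needing care is the bookkeeping in the identification of $F_g$. One must check that the biset convention $(g,h)\cdot x=gxh^{-1}$ and the conjugation ${}^{(g,1)}$ yield ${}^gH$ rather than $H^g$. One must also check that the embedding $K_g\to F_g$, $b\mapsto((1,b),(b,1))$, matches $b\mapsto(1,b)\in L$. These checks are routine, so I expect no substantive obstacle.
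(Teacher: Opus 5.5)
Your proposal is correct and follows the paper's own proof. Like the paper, you specialize Theorem~\ref{thm: categorical Clifford composition} to $(G,G,1)$ with $M=H\times1$, check that the Clifford data reduce to the displayed one-sided data, and obtain independence of choices, unitality, module associativity and the categorical form from that theorem and Corollary~\ref{cor: decorated bisets and bimodule categories}. Your explicit bookkeeping of $M^g={}^gH\times1$, $F_g$, $K_g$ and $\eta_g$ simply spells out what the paper asserts in one line.
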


\begin{proof}
Apply Theorem~\ref{thm: categorical Clifford composition}, equivalently
formula~\eqref{eqn: decorated double Burnside product}, to
$(L,\mu)\in \DBurn(G,G)$ and
$(H\times1,\alpha)\in \DBurn(G,1)$.  The double cosets and all
Clifford data specialize to those displayed above, giving
\eqref{eqn: one-sided Clifford action formula}.  The categorical statement
is the case $(G,H,K)=(G,G,1)$ of
Theorem~\ref{thm: categorical Clifford composition}.  Unitality and module
associativity follow from
Corollary~\ref{cor: decorated bisets and bimodule categories}.
\end{proof}

\begin{example}
\label{ex: one-sided decorated Burnside C2 action}
Continue with the notation of
Example~\ref{ex: decorated double Burnside C2}.  The one-sided decorated
Burnside ring has basis $\xi=[1,1]$ and $\tau=[C_2,1]$.
Writing the acting element horizontally, the action of
$\DBurn(C_2,C_2)$ on $\OneBurn(C_2)$ is
\[
\renewcommand{\arraystretch}{1.18}
\begin{array}{c|cccccc}
 x & \mathbf 1 & \omega & q & \ell & r & e \\ \hline
 x\triangleright\xi
   & \xi & \tau & \tau & 2\tau & \xi & 2\xi \\
 x\triangleright\tau
   & \tau & \xi & 2\tau & \tau & 2\xi & \xi
\end{array}
\]
Categorically, $\xi$ corresponds to the regular module category
$\M(1,1)\simeq\Vec_{C_2}$,
whereas $\tau$ corresponds to the rank-one module category
$\M(C_2,1)\simeq\Vec$.  Thus the table records the relative tensor products
of these two module categories with the six indecomposable
$\Vec_{C_2}$-bimodule categories.

For example, consider the noninvertible bimodule $\M(V,1)$.  Since
$q\triangleright\xi=\tau$, associativity and $q^2=2q$ give
$q\triangleright\tau=(q^2)\triangleright\xi=2\tau$.  On the other hand, the
Fourier bimodule
$\M(V,\beta)$ is invertible and exchanges the two module categories:
$\M(V,\beta)\bt_{\Vec_{C_2}}\Vec_{C_2}\simeq\Vec$ and
$\M(V,\beta)\bt_{\Vec_{C_2}}\Vec\simeq\Vec_{C_2}$.
\end{example}

\begin{remark}
For $L=\Delta(G)$ there is one double coset, $K_g=1$, $N_g=H$, and the
output cocycle is $\alpha$.  Thus
$[\Delta(G),1]\triangleright[H,\alpha]=[H,\alpha]$.
More generally, for a diagonal decorated subgroup
$[\Delta(K),\beta]$ the groups $K_g$ are trivial, and
\eqref{eqn: one-sided Clifford action formula} reduces to
\[
 [\Delta(K),\beta]\triangleright[H,\alpha]
 =\sum_{g\in K\backslash G/H}
 \left[K\cap{}^gH,
 \beta|_{K\cap{}^gH}\,{}^g\alpha|_{K\cap{}^gH}\right].
\]
This is the usual multiplication formula in the one-sided cohomological
Burnside ring.  The map $[H,\alpha]\mapsto[\Delta(H),\alpha]$ is an
injective unital ring homomorphism, sending $[G,1]$ to
$[\Delta(G),1]$.  Injectivity follows because decorated diagonal pairs are
conjugate exactly when the original subgroup--cocycle pairs are
$G$-conjugate.  Applying the Clifford formula to two decorated diagonal
subgroups gives the corresponding decorated diagonals with the one-sided
product coefficients, since their coordinate kernels are trivial.  Under
this embedding, the action restricts to multiplication in the one-sided
cohomological Burnside ring.
\end{remark}

\subsection{Invertibility and Morita equivalence}
\label{subsect: invertibility criterion}

Let $G$ and $H$ be finite groups.  A decorated $(G,H)$-biset $\mathbb X$ is
\emph{invertible} if there is a decorated $(H,G)$-biset $\mathbb Y$ such
that $[\mathbb X]\circ_H[\mathbb Y]=[\Delta(G),1]$ and
$[\mathbb Y]\circ_G[\mathbb X]=[\Delta(H),1]$.  Here $\mathbb Y$ is required
to be an actual decorated biset.
Under the identifications $\Phi_{G,H}$ and $\Phi_{H,G}$ of
Corollary~\ref{cor: decorated bisets and bimodule categories}, these
equations are equivalent to
\[
 \V_{G,H}(\mathbb X)\bt_{\Vec_H}\V_{H,G}(\mathbb Y)\simeq\Vec_G,
 \qquad
 \V_{H,G}(\mathbb Y)\bt_{\Vec_G}\V_{G,H}(\mathbb X)\simeq\Vec_H.
\]
Indeed, equality of classes of actual semisimple categories determines their
indecomposable decompositions.  Thus an invertible decorated biset implements
a categorical Morita equivalence
$\Vec_G\simeq_{\mathrm{Mor}}\Vec_H$,
and hence braided equivalences  $\Rep(D(G))\simeq_{\mathrm{br}}\Rep(D(H))$.

A product of two nonempty transitive decorated bisets is nonzero: its
Clifford formula has a double coset and a simple module of a nonzero
semisimple twisted group algebra.  By positivity, a decomposition of either
factor into more than one transitive component could not give a single
identity term.  Hence an invertible decorated biset and its actual inverse
are transitive.  Represent the first by a pair $(L,[\mu])$, where
$L\leq G\times H$ and
$[\mu]\in H^2(L,k^\times)$.  Put $A=k_1(L)$ and $B=k_2(L)$.
The subgroups $A\times1$ and $1\times B$ commute in $L$.  Hence the
\emph{mixed commutator pairing}
\begin{equation}
\label{eqn: mixed commutator pairing}
 \beta_\mu:A\times B\longrightarrow k^\times,
 \qquad
 \beta_\mu(a,b)
 =\Alt_\mu\bigl((a,1),(1,b)\bigr)
 =\frac{\mu((a,1),(1,b))}{\mu((1,b),(a,1))}
\end{equation}
is defined.  It depends only on $[\mu]$ and is multiplicative in both
variables.  Its simultaneous conjugation invariance is
\[
 \beta_\mu(gag^{-1},hbh^{-1})=\beta_\mu(a,b)
 \quad ((g,h)\in L,\ a\in A,\ b\in B).
\]

The following criterion was obtained by Davydov in terms of ribbon
equivalences of centers \cite[Corollary~3.6.3]{D}.  For equal groups, a
bimodule-category proof and the inverse construction appear in
\cite[Proposition~5.2 and Remark~5.5]{NR}.
Galindo--Plavnik \cite[Theorem~7.9 and Remark~7.10]{GP} give the criterion
for bimodule categories between twisted pointed categories.
We recover the untwisted criterion from the Clifford composition formula.

\begin{theorem}
\label{thm: invertibility criterion}
The transitive decorated $(G,H)$-biset $[L,\mu]$ is invertible if and only if
\begin{equation}
\label{eqn: invertibility conditions}
 p_1(L)=G,
 \qquad
 p_2(L)=H,
 \qquad
 \beta_\mu:A\times B\longrightarrow k^\times
 \text{ is a perfect pairing}.
\end{equation}
Equivalently, $L$ is a subdirect product, $A$ and $B$ are abelian, and
\eqref{eqn: mixed commutator pairing} is nondegenerate.

Define $(g,h)^\dagger=(h^{-1},g^{-1})$ and
$L^\dagger=\{\ell^\dagger\mid \ell\in L\}\leq H\times G$,
and let $\mu^\dagger$ be the cocycle on $L^\dagger$ given by
$\mu^\dagger(\ell^\dagger,m^\dagger) =\mu(\ell^{-1},m^{-1})$ for
$\ell,m\in L$.
When \eqref{eqn: invertibility conditions} holds,
\begin{equation}
\label{eqn: inverse general decorated biset}
 [L,\mu]^{-1}=[L^\dagger,(\mu^\dagger)^{-1}].
\end{equation}
\end{theorem}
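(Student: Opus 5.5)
We compute both composites with the Clifford formula of Theorem~\ref{thm: categorical Clifford composition}, first for necessity and then for sufficiency with the explicit candidate inverse $[L^\dagger,(\mu^\dagger)^{-1}]$.

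\emph{Necessity.} Suppose $[L,\mu]$ is invertible, with transitive inverse $[M,\nu]$, as shown in the paragraph preceding the theorem. Then $[L,\mu]\circ_H[M,\nu]$ is a single term $[\Delta(G),1]$.
\begin{itemize}
\item The double-coset sum is nonempty, and each summand satisfies $N_{g,\pi}\le N_g=L*M^g\le p_1(L)\times p_2(M)$.
\item The output $\Delta(G)$ projects onto $G$ in the first factor, so $p_1(L)=G$. By symmetry, using $[M,\nu]\circ_G[L,\mu]=[\Delta(H),1]$, we get $p_2(L)=H$.
\item The coordinate kernel $k_1(N_g)$ contains $k_1(L)=A$, since $(a,1)\in L$ and $(1,1)\in M^g$. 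A stabilizer $N_{g,\pi}$ equal to $\Delta(G)$ has trivial $k_1$.
\end{itemize}
The Clifford data must therefore cut $A\times 1$ out of $N_g$ via the stabilizer condition. To make this precise I compare $L$ with the "self-composite" $[L,\mu]\circ_H[L^\dagger,(\mu^\dagger)^{-1}]$ below. Alternatively, I use the counting constraint: equal ranks of module categories forces a single summand with a single orbit.

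A cleaner route to necessity is to work with the one-sided action (Corollary~\ref{thm: one-sided Clifford action}). An invertible bimodule gives a bijection on $\Omega_G$, hence preserves the count of summands. I would fall back on this if the direct argument is messy.

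\emph{Main computation.} Take $M=L^\dagger$ and $\nu=(\mu^\dagger)^{-1}$, so that $p_1(M)=p_2(L)$ and $k_1(M)=B$.
\begin{itemize}
\item Subdirectness gives a single double coset $g=1$.
\item The fiber product is $F_1=\{((a,b),(b,a')) : (a,b),(a',b)\in L\}$, with kernel $K_1=B$ and image $N_1=\{(a,a') : a a'^{-1}\in A\}=\Delta(G)(A\times 1)$.
\end{itemize}
The key step is to compute $\alpha_1=\eta_1|_{B}$. Since $\nu$ restricted to $k_1(M)$ is the inverse of $\mu$ transported by inversion, I expect $\alpha_1$ to be cohomologically trivial, so that $\Irr(A_1)=\widehat B$. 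I then compute the $N_1$-action on $\widehat B$. The element $(a,1)$, with $a\in A$, lifts to $((a,1),(1,1))$ in $F_1$. Conjugating $u_{(1,b)}$ by this lift multiplies it by the commutator scalar $\Alt$, so $(a,1)$ sends $\chi\mapsto\chi\cdot\beta_\mu(a,-)^{\pm1}$. The diagonal part $\Delta(G)$ acts through conjugation, and with the chosen inverse cocycle its contribution should be trivial up to this twist.

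If $\beta_\mu$ is perfect, the following hold.
\begin{itemize}
\item $A$ acts simply transitively on $\widehat B$, so there is one orbit.
\item The stabilizer of the trivial character is $\Delta(G)$.
\item The stabilizer cocycle $\lambda$ is trivial, because the intertwiners for the trivial character can be taken to be the units $u_{((g,h),(h,g))}$, and $\mu\cdot(\mu^\dagger)^{-1}$ pulled back cancels on the diagonal.
\end{itemize}
This gives $[\Delta(G),1]$. The symmetric computation for $[L^\dagger,(\mu^\dagger)^{-1}]\circ_G[L,\mu]=[\Delta(H),1]$ uses perfectness in the other variable, which proves sufficiency and \eqref{eqn: inverse general decorated biset}. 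The same computation also completes necessity: if $\beta_\mu$ is not perfect, or not subdirect, then the stabilizer contains $(\operatorname{Rad}\times1)$ or there are several orbits, and the output is not a lone $[\Delta(G),1]$. I would still run the necessity argument through a general $[M,\nu]$ using the constraint $k_1(N_{g,\pi})=1$. Perfectness also forces $A$ and $B$ to be abelian, since $\beta_\mu$ kills commutators; this gives the stated equivalent form.

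\emph{Main obstacle.} The hardest step is the cocycle bookkeeping: showing that $\alpha_1$ is trivial on $B$ and that $\lambda$ on $\Delta(G)$ is a coboundary, with correct signs from the $\dagger$ convention. I would first check both claims against Example~\ref{ex: decorated double Burnside C2}, where $\omega^2=\mathbf 1$, and then do the general computation using normalized cocycles restricted to the relevant subgroups.
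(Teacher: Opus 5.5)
\textbf{Sufficiency.} Your argument is essentially the paper's. With $M=L^\dagger$ and $\nu=(\mu^\dagger)^{-1}$, one gets $\eta_1((a,b,c),(a',b',c'))=\mu((a,b),(a',b'))/\mu((c,b),(c',b'))$. This is identically $1$ on the kernel $B$ and on $q_1^{-1}(\Delta(G))=\{(a,b,a)\}$, so the sign bookkeeping you worried about causes no trouble. Perfectness makes $A\times1$ act simply transitively on $\widehat B$. Since $N_1=\Delta(G)(A\times 1)$ and $A\times1$ acts freely, the stabilizer is exactly $\Delta(G)$, with trivial stabilizer cocycle.

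\textbf{The gap: necessity.} After obtaining $p_1(L)=G$ and $p_2(L)=H$, you must show that $\beta_\mu$ is perfect for an \emph{arbitrary} inverse $[M,\nu]$.

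Your main necessity step (``the same computation also completes necessity'') only shows one thing. If $\beta_\mu$ is not perfect, then the particular candidate $[L^\dagger,(\mu^\dagger)^{-1}]$ is not an inverse. That does not exclude other $[M,\nu]$. To use it, you would need an independent proof that every inverse equals $[L^\dagger,(\mu^\dagger)^{-1}]$. For example, you could show that the inverse of an invertible bimodule category is its opposite and then compute $\mathcal M(L,\mu)^{\mathrm{op}}$ in subgroup--cocycle coordinates. You give neither.

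Your fallback routes do not close the gap either. The rank count only restates that the composite is a single term. The one-sided route is not developed.

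Your remaining suggestion is to use $k_1(N_{g,\pi})=1$ for a general $M$, but you only name it. Freeness of $A\times1$ alone gives an injectivity statement, not perfectness. Your commutator identity shows that $(a,1)$ twists $\pi$ by the character $\beta_\mu(a,-)^{-1}|_K$, so freeness embeds $A$ into the characters of $K$. That yields only $|A|\le|B|$.

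\textbf{The missing idea.} You need a counting argument that uses \emph{both} composites. Put $C=k_1(M)$, $D=k_2(M)$, $K=B\cap C$ and $N=L*M$, and let $I$ be the stabilizer, which is conjugate to $\Delta(G)$. Since there is a single orbit and $(A\times1)\cap I=1$,
\[
 |A|\leq[N:I]=|\operatorname{Irr}(k_\alpha[K])|\leq|K|\leq|B|.
\]
The reverse composite $[M,\nu]\circ_G[L,\mu]$ has kernel $D\cap A$, and $1\times B$ acts freely there, so $|B|\leq|D\cap A|\leq|A|$. Equality throughout then forces the following:
\begin{itemize}
\item $K=B$;
\item $|A|=|B|$;
\item $|\operatorname{Irr}(k_\alpha[B])|=|B|$, so all simple modules are one-dimensional and $B$ is abelian;
\item $A\times1$ acts simply transitively on $\operatorname{Irr}(k_\alpha[B])$.
\end{itemize}
Only at this point does your commutator formula $u_{\widetilde a}u_{\widetilde b}=\beta_\mu(a,b)\,u_{\widetilde b}u_{\widetilde a}$ turn freeness into a bijection $A\to\widehat B$, $a\mapsto\beta_\mu(a,-)$. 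That bijection is perfectness of $\beta_\mu$, and it also shows that $A$ is abelian.
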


\begin{proof}
Let $[M,\nu]$ be the inverse, with normalized cocycles $\mu,\nu$.
Output stabilizers in the product over $H$ lie in
$p_1(L)\times p_2(M)$, so its being the identity forces
$p_1(L)=p_2(M)=G$.  The reverse product gives
$p_2(L)=p_1(M)=H$.

Put $C=k_1(M)$, $D=k_2(M)$, $K=B\cap C$, and $N=L*M$.
On $F_1=L\times_HM$ put
$\eta_1=\operatorname{pr}_L^*\mu\,\operatorname{pr}_M^*\nu$
and $\alpha=\eta_1|_K$.  There is one double coset and one
$N$-orbit in $\Irr(k_\alpha[K])$, with stabilizer $I$ conjugate to
$\Delta(G)$.  Since $A\times1$ and $1\times D$ lie in $N$ and meet $I$
trivially,
\[
 \max\{|A|,|D|\}\leq[N:I]
 =|\Irr(k_\alpha[K])|\leq|K|\leq\min\{|B|,|C|\}.
\]
The reverse product similarly gives
$\max\{|B|,|C|\}\leq|D\cap A|\leq\min\{|A|,|D|\}$.
Thus $A=D$, $B=C=K$, $|A|=|B|$, and
$|\Irr(k_\alpha[B])|=|B|$.  All simple modules are one-dimensional,
so $B$ is abelian.  Their stabilizers are conjugate to $\Delta(G)$;
hence $A\times1$ acts freely and, by equality of cardinalities,
simply transitively.

For $a\in A$ and $b\in B=C$, set
$\widetilde a=((a,1),(1,1))$ and
$\widetilde b=((1,b),(b,1))$ in $F_1$.
These elements commute.  The corresponding units of $k_{\eta_1}[F_1]$ satisfy
\begin{equation}
\label{eqn: mixed commutator Clifford action}
 u_{\widetilde a}u_{\widetilde b}
 =\beta_\mu(a,b)\,u_{\widetilde b}u_{\widetilde a}.
\end{equation}
The contribution of $\nu$ is $1$, since
$\nu((1,1),(b,1))=\nu((b,1),(1,1))=1$.
Thus the homogeneous component containing $u_{\widetilde a}$ sends
a simple module $\rho$ to $\rho\otimes\beta_\mu(a,-)^{-1}$.
Freeness makes $A\to\widehat B$, $a\mapsto\beta_\mu(a,-)$, injective;
equality of orders makes it an isomorphism, proving perfectness.

Conversely, assume \eqref{eqn: invertibility conditions} and compose
$[L,\mu]$ with
$[L^\dagger,(\mu^\dagger)^{-1}]$.  There is one double coset, and
in triple coordinates
\[
 \begin{aligned}
 F_1&=\{(a,b,c):(a,b),(c,b)\in L\},\\
 \eta_1\bigl((a,b,c),(a',b',c')\bigr)
 &=\frac{\mu((a,b),(a',b'))}{\mu((c,b),(c',b'))}.
 \end{aligned}
\]
The restriction to the kernel $B$ is trivial, so its simple modules are
indexed by $\widehat B$.  Perfectness and the calculation above make the
$A\times1$-action simply transitive.
Also $\eta_1=1$ on
$q_1^{-1}(\Delta(G))=\{(a,b,a):(a,b)\in L\}$.
The trivial character of $B$ extends to the trivial representation of this
preimage.  Thus $\Delta(G)$ fixes it, with intertwiners satisfying the
cocycle equation with scalar $1$.
Since $N_1=\{(a,c):aA=cA\}$ has order $|G||A|$ and the orbit has size
$|A|$, the stabilizer is exactly $\Delta(G)$.
The Clifford formula gives $[\Delta(G),1]$; interchanging the endpoints
gives $[\Delta(H),1]$ for the reverse product.
\end{proof}

Goursat's lemma gives the following reformulation; see also
\cite[Lemma~3.6.4]{D}.
Let $A\triangleleft G$ and $B\triangleleft H$ be normal subgroups and let
$\phi:G/A\xrightarrow{\ \sim\ }H/B$.
The corresponding subdirect product is
\begin{equation}
\label{eqn: Goursat subdirect product}
 L_{A,B,\phi}
 =\{(g,h)\in G\times H\mid \phi(gA)=hB\}.
\end{equation}
If $A$ and $B$ are abelian, let $Q=G/A$ act on $A$ by conjugation
and on $B$ through $\phi:Q\to H/B$.  These actions are independent
of the chosen lifts.  With trivial $Q$-action on $k^\times$, the
simultaneous conjugation invariance of $\beta_\mu$ gives
\begin{equation}
\label{eqn: cross commutator map}
 \operatorname{cr}_{L_{A,B,\phi}}:
 H^2(L_{A,B,\phi},k^\times)
 \longrightarrow
 \Hom_Q(A\otimes B,k^\times),
 \qquad [\mu]\longmapsto\beta_\mu.
\end{equation}

\begin{corollary}
\label{cor: Goursat cohomological Morita criterion}
The groups $G$ and $H$ are categorically Morita equivalent if and only if
there exist abelian normal subgroups $A\triangleleft G$ and
$B\triangleleft H$, an isomorphism
$\phi:G/A\xrightarrow{\sim}H/B$, and a class
$[\mu]\in H^2(L_{A,B,\phi},k^\times)$ whose mixed commutator is perfect.
Thus the Morita equivalences between $\Vec_G$ and $\Vec_H$ are represented
by the Goursat data $(A,B,\phi,[\mu])$
with this property, modulo $G\times H$-conjugacy of the associated pairs
$(L_{A,B,\phi},[\mu])$.
\end{corollary}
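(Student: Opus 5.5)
The plan is to reduce the corollary to Theorem~\ref{thm: invertibility criterion} combined with Goursat's lemma.

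First, Morita equivalence $\Vec_G\simeq_{\mathrm{Mor}}\Vec_H$ means there is an invertible $\Vec_G$--$\Vec_H$ bimodule category. Under $\Phi_{G,H}$ and $\Phi_{H,G}$ (Corollary~\ref{cor: decorated bisets and bimodule categories}), this is the same as an invertible decorated $(G,H)$-biset in the sense of Subsection~\ref{subsect: invertibility criterion}. For one direction, an invertible bimodule category and its inverse are actual semisimple bimodule categories, so they come from decorated bisets, and equality of classes is implied by the equivalences. For the converse, equality of classes of actual categories determines the categories up to equivalence, as already noted before the theorem. By the positivity argument given there, any invertible decorated biset is transitive, so it is represented by a pair $(L,[\mu])$.

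Next I apply Theorem~\ref{thm: invertibility criterion}. The pair $(L,[\mu])$ is invertible if and only if $L$ is subdirect, $A=k_1(L)$ and $B=k_2(L)$ are abelian, and $\beta_\mu$ is perfect. By Goursat's lemma, a subdirect $L\leq G\times H$ is exactly $L_{A,B,\phi}$, where $A=k_1(L)\triangleleft G$, $B=k_2(L)\triangleleft H$, and $\phi:G/A\to H/B$ is the isomorphism induced by $L$. Conversely, any Goursat data $(A,B,\phi)$ gives a subdirect $L_{A,B,\phi}$ with $k_1=A$ and $k_2=B$. The mixed commutator of $[\mu]$ is the image under $\operatorname{cr}_{L_{A,B,\phi}}$ of \eqref{eqn: cross commutator map}, so the perfectness condition reads the same on both sides. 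Together these give the ``if and only if''.

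Finally, for the parametrization statement, I use that indecomposable bimodule categories, and in particular invertible ones, correspond bijectively to $G\times H$-conjugacy classes of pairs $(L,[\mu])$ via~\eqref{eq: decorated biset K0 identification}. Restricting this bijection to invertible pairs, and rewriting each $L$ in Goursat form, gives the stated description modulo conjugacy. Conjugation by $(g,h)$ replaces $(A,B,\phi)$ by $({}^gA,{}^hB,c_h\phi c_g^{-1})$ and transports $[\mu]$. This is consistent with conjugating the associated pairs, and perfectness is preserved by the invariance of $\beta_\mu$.

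The main obstacle is the bookkeeping in the first step. One has to be careful that a categorical inverse gives an inverse in the Burnside sense with an actual decorated biset, and that transitivity holds. Both are already supplied by the discussion preceding the theorem, so I expect the proof to be short.
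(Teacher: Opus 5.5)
Your proposal is correct and takes the same route as the paper. There the corollary is presented, without separate proof, as the Goursat reformulation of Theorem~\ref{thm: invertibility criterion}, resting on the preceding discussion that invertible decorated bisets correspond to invertible bimodule categories and are transitive, and on the classification of indecomposable bimodule categories by $G\times H$-conjugacy classes of pairs $(L,[\mu])$.
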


A perfect pairing identifies $B$ with $\widehat A$ as a $Q$-module by
$b\mapsto\beta_\mu(-,b)$, where
$(q\cdot\chi)(a)=\chi(q^{-1}\cdot a)$.
The extensions $1\to A\to G\to Q\to1$ and $1\to B\to H\to Q\to1$,
with the second quotient map $h\mapsto\phi^{-1}(hB)$, have fiber product
$G\times_QH=L_{A,B,\phi}$.
For pointed Morita equivalence see also \cite{NaiduMorita}; an
extension-theoretic formulation is given in
\cite[Theorem~3.9]{UribeMorita}.


\section{Extremal structure and the canonical one-sided action}
\label{sect: top and bottom ideals}

Put $R_G=\DBurn(G,G)$, and let $\mathcal B_G$ be its distinguished
basis, with invertible part $\mathcal B_G^\times$.
The noninvertible basis elements span the unique largest proper based ideal
$I_G$.  At the other extreme, the basis elements that factor through the
trivial group span the \emph{bottom ideal} $J_G$, the unique minimal nonzero
based ideal.  The composition matrix $C_G$ controls both the bottom ideal and
the restriction of the canonical one-sided action to it; its categorical
interpretation will be given below.  We compute the radical of
$\mathbb C\otimes_{\mathbb Z}J_G$ and prove that this algebra is semisimple
precisely when $G$ is cyclic.  We also prove that the full canonical
one-sided action is faithful only for the trivial group.

\subsection{Duality and the top quotient}
\label{subsect: singular ideal}

Bimodule duality gives a basis-preserving anti-involution
$(-)^\vee:\DBurn(G,H)\to\DBurn(H,G)$, satisfying
$(x\circ_Hy)^\vee=y^\vee\circ_Hx^\vee$.
The dual of a bimodule category $\mathcal M$ is its opposite category
$\mathcal M^\vee=\mathcal M^{\mathrm{op}}$, with the two actions interchanged
and twisted by categorical duals; see \cite[Section~2.9]{ENO2}.  Let
$\tau_G:R_G\longrightarrow\mathbb Z$
be the coefficient of the identity $[\Delta(G),1]$, and put
\begin{equation}
\label{eq: coefficient identity pairing}
 \langle x,y\rangle_G:=\tau_G(xy^\vee).
\end{equation}
The degeneracy of this form detects precisely the noninvertible part of the
basis.  We first record a consequence of the Clifford formula.

\begin{proposition}
\label{prop: invertible constituent}
Let $x,y$ be basis elements of $R_G$.  If an invertible basis element occurs
in $xy$, then both $x$ and $y$ are invertible.
\end{proposition}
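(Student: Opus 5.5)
We want: if an invertible basis element $z$ occurs in $xy$, then $x$ and $y$ are invertible. The plan is to move $z$ to the identity, obtain one-sided inverses of $x$ and $y$, and then check the invertibility criterion directly.

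\textbf{Step 1: reduce to the identity.} Let $z$ be an invertible basis element occurring in $xy$. Multiplying by $z^{-1}$ on the right gives $xyz^{-1}$. Since $z^{-1}$ is an actual invertible bimodule category, right multiplication by it permutes the distinguished basis. Expanding $xy$ with nonnegative coefficients, the term $z$ is sent to $[\Delta(G),1]$. So the identity occurs in $x\,y'$, where $y'=yz^{-1}$ is again a basis element. If we show that $x$ and $y'$ are invertible, then $y=y'z$ is invertible too. We may therefore assume that $[\Delta(G),1]$ occurs in $xy$.

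\textbf{Step 2: extract the group-theoretic data.} Write $x=[L,\mu]$ and $y=[M,\nu]$. By the Clifford formula (Corollary~\ref{cor: Clifford formula double Burnside ring}), some summand $(N_{g,\pi},\lambda_{g,\pi})$ is conjugate to $(\Delta(G),1)$.

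After conjugating, we may take $g=1$ by replacing $M$ with $M^g$. Then $N_{1,\pi}\le N_1=L*M\le p_1(L)\times p_2(M)$, and $N_{1,\pi}$ contains a conjugate of $\Delta(G)$. This forces $p_1(L)=G=p_2(M)$. Moreover $N_1$ projects onto $G$ in the first factor, and
\[
[N_1:N_{1,\pi}]\le|\Irr(A_1)|\le|K_1|\le\min(|k_2(L)|,|k_1(M)|).
\]
Also $N_1\supseteq k_1(L)\times k_2(M)$, and each factor meets $\Delta(G)$ trivially. Hence
\[
\max(|k_1(L)|,|k_2(M)|)\le\min(|k_2(L)|,|k_1(M)|).
\]

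\textbf{Step 3: obtain the missing projection conditions via duality.} This is the main obstacle. The inequality above is only one-sided, and $p_2(L)$, $p_1(M)$ are not yet controlled. The proof of Theorem~\ref{thm: invertibility criterion} used both products $xy$ and $yx$; here only one is available.

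I would use the duality anti-involution together with the trace pairing. The coefficient of the identity satisfies $\tau_G(xy)=\tau_G(yx)$, because the identity occurs in a composite of actual bimodule categories exactly when there is a nonzero bimodule functor between $\mathcal M^\vee$ and the other factor. This is an adjunction statement: $\tau_G(xy)=\dim\operatorname{Hom}(\mathcal M_x^\vee,\mathcal M_y)$-type multiplicity. That characterization is symmetric in the order, so the identity occurs in $yx$ as well. Moreover it forces $y=x^\vee$, since the relevant Hom between indecomposable bimodule categories is nonzero only when they are equivalent.

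With $y=x^\vee=[L^\dagger,(\mu^\dagger)^{-1}]$, apply Step 2 to both $xy$ and $yx$. This gives $p_1(L)=p_2(L)=G$ together with the two opposite inequalities
\[
|k_1(L)|\le|k_2(L)|\le|k_1(L)|.
\]
From there, the argument of Theorem~\ref{thm: invertibility criterion} goes through verbatim. Equality throughout gives $|\Irr(A_1)|=|K_1|$, so $k_2(L)$ is abelian. The stabilizer is a conjugate of $\Delta(G)$, so $k_1(L)\times1$ acts freely on $\Irr(A_1)$. Via \eqref{eqn: mixed commutator Clifford action}, this makes $\beta_\mu$ injective and hence perfect. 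Theorem~\ref{thm: invertibility criterion} then yields the invertibility of $x$, and $y=x^{-1}$.

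If the adjunction identification in Step 3 proves delicate, the fallback is to first transfer the problem from $y$ to $x$ by duality ($y^\vee x^\vee=(xy)^\vee$ also contains the identity), and then combine the inequalities from $xy$ and $y^\vee x^\vee$.
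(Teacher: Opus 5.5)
\textbf{Gap in Step~3.} Your Steps~1 and~2 are sound and essentially match the paper. Step~3, however, rests on a false claim. Bimodule functor categories between any two nonzero $\Vec_G$-bimodule categories are never zero (compare the positivity of every $c_{b,c}$). So nonvanishing of such a Hom cannot detect whether $[\Delta(G),1]$ occurs as a summand of $xy$, and it cannot force $y=x^\vee$.

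Example~\ref{ex: decorated double Burnside C2} gives a concrete counterexample. There $q=[V,1]$ satisfies $q^\vee=q$, and $\M(V,1)$ has nonzero bimodule endofunctors, yet $qq^\vee=q^2=2q$ has identity coefficient $0$.

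The identity $\tau_G(xy)=\tau_G(yx)$ does not rescue the step. In the paper it is obtained (Corollary~\ref{cor: singular ideal}) as a consequence of this very proposition, so invoking it here is circular.

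Your fallback gives nothing new either. The operation $\dagger$ interchanges the coordinate kernels and the coordinate projections, and duality reverses the order of the factors. Hence Step~2 applied to $y^\vee x^\vee$ returns exactly $p_1(L)=p_2(M)=G$ and $\max\{|k_1(L)|,|k_2(M)|\}\le\min\{|k_2(L)|,|k_1(M)|\}$ again.

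\textbf{The missing idea.} No second product is needed, because Goursat's lemma supplies the reverse inequalities. Since $p_1(L)=G$, the isomorphism $p_1(L)/k_1(L)\cong p_2(L)/k_2(L)$ gives $|k_2(L)|=|k_1(L)|\,|p_2(L)|/|G|\le|k_1(L)|$. Likewise $p_2(M)=G$ gives $|k_1(M)|\le|k_2(M)|$.

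With $A=k_1(L)$, $B=k_2(L)$, $C=k_1(M)$, $D=k_2(M)$, $K=B\cap C$, and $I=N_{1,\pi}$ (which equals a conjugate of $\Delta(G)$, not merely contains one), your chain closes up:
$\max\{|A|,|D|\}\le[N_1:I]\le|\Irr(A_1)|\le|K|\le\min\{|B|,|C|\}\le\min\{|A|,|D|\}$.
Equality therefore holds throughout. Thus $K=B=C$, all four kernels have the same order, and Goursat then also yields $p_2(L)=p_1(M)=G$.

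\textbf{Completing the argument.} From here your remaining argument works. Since $|\Irr(A_1)|=|K|$, all simples are one-dimensional, so $B$ is abelian. The element $(a,1)$ twists $[\pi]$ by $\beta_\mu(a,-)^{-1}$, and $(A\times1)\cap I=1$, so $a\mapsto\beta_\mu(a,-)$ is injective and hence $\beta_\mu$ is perfect. Therefore $x$ is invertible by Theorem~\ref{thm: invertibility criterion}.

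Then $y=x^{-1}$ follows without any duality argument. Left-multiply $xy=[\Delta(G),1]+(\text{nonnegative combination of basis elements})$ by $x^{-1}$, which permutes the basis. Positivity then forces the basis element $y$ to equal $x^{-1}$. The paper instead reruns the stabilizer argument with $1\times D$ to show that $\beta_\nu$ is perfect.
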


\begin{proof}
Multiplication by an invertible basis element permutes the distinguished
basis.  After multiplying on the left by the inverse of the given
constituent, we may therefore assume that the identity occurs in $xy$.
Write $x=[L,\mu]$ and $y=[M,\nu]$, with normalized cocycles, and choose
a double-coset representative giving an identity summand in the Clifford
formula.  After conjugating $(M,\nu)$ we may take this representative
to be $1$.

Put $A=k_1(L)$, $B=k_2(L)$, $C=k_1(M)$, and $D=k_2(M)$.
For the corresponding Clifford data put $K=B\cap C$, $N=L*M$, and
$\alpha=\alpha_1$.  Choose $\pi\in\Irr(k_\alpha[K])$ giving the
identity summand and write $I=N_{1,\pi}$.  Then $I$ is conjugate to
$\Delta(G)$, so its coordinate projections are surjective and its
coordinate kernels are trivial.  Since
$I\leq N\leq p_1(L)\times p_2(M)$, we obtain
$p_1(L)=p_2(M)=G$.  Moreover, $A\times1$ and $1\times D$ lie in $N$
and both meet $I$ trivially.

Goursat's lemma gives $|B|\leq|A|$ and $|C|\leq|D|$.
Since $I$ is the stabilizer of $[\pi]$,
\[
 \max\{|A|,|D|\}
 \leq[N:I]
 \leq|\Irr(k_\alpha[K])|
 \leq|K|
 \leq\min\{|B|,|C|\}
 \leq\min\{|A|,|D|\}.
\]
Thus equality holds throughout.  In particular, $K=B=C$ and
$|A|=|B|=|C|=|D|$.  Goursat's lemma now also gives
$p_2(L)=p_1(M)=G$.

Since $|\Irr(k_\alpha[K])|=|K|$, every simple
$k_\alpha[K]$-module is one-dimensional.  Thus the twisted group
algebra is commutative, and $K=B=C$ is abelian.
For $a\in A$ and $b\in B=C$, take the lifts
$\widetilde a=((a,1),(1,1))$ and
$\widetilde b=((1,b),(b,1))$ in $F_1=L\times_G M$.
On the transported module
$(\widetilde A_1)_{(a,1)}\ot_{A_1}\pi$,
\eqref{eqn: mixed commutator Clifford action} gives
\[
 u_{\widetilde b}(u_{\widetilde a}\ot v)
 =\beta_\mu(a,b)^{-1}
   u_{\widetilde a}\ot\pi(u_{\widetilde b})v.
\]
Thus $(a,1)$ sends $\pi$ to $\pi\ot\beta_\mu(a,-)^{-1}$.
If $\beta_\mu(a,-)=1$, it fixes $[\pi]$, so $(a,1)\in I$ and
$a=1$.  Hence $A\to\widehat B$, $a\mapsto\beta_\mu(a,-)$, is
injective.  Equality of orders makes it an isomorphism, so $A$ is
abelian and $\beta_\mu$ is perfect.  The same stabilizer argument
using $1\times D$ proves that the mixed commutator pairing for
$[M,\nu]$ is perfect.  Both factors are therefore invertible by
Theorem~\ref{thm: invertibility criterion}.
\end{proof}

\begin{corollary}
\label{cor: singular ideal}
The subgroup
\begin{equation}
\label{eq: singular ideal definition}
 I_G:=\mathbb Z\text{-span}
 \bigl(\mathcal B_G\setminus\mathcal B_G^\times\bigr)
\end{equation}
is a two-sided based ideal.  It is the unique largest proper based ideal of
$R_G$, and
\begin{equation}
\label{eq: top quotient BrPic}
 R_G/I_G\cong\mathbb Z[\BrPic(\Vec_G)].
\end{equation}
Moreover, $\tau_G(xy)=\tau_G(yx)$, and the radical of
\eqref{eq: coefficient identity pairing} is exactly $I_G$.
\end{corollary}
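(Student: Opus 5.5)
The plan is to read off every assertion from Proposition~\ref{prop: invertible constituent}, combined with the positivity of structure constants (Clifford composition of basis elements is a nonnegative, nonzero combination of basis elements).

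\textbf{Ideal property.} Let $x\in\mathcal B_G\setminus\mathcal B_G^\times$ and $y\in\mathcal B_G$. If some invertible basis element occurred in $xy$ or in $yx$, the proposition would force $x$ to be invertible, a contradiction. So both $xy$ and $yx$ lie in $I_G$. Since $I_G$ is spanned by a subset of the basis, it is a two-sided based ideal.

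\textbf{Maximality.} Every based ideal is spanned by a subset of $\mathcal B_G$. If a based ideal contains some invertible basis element $u$, then it contains $u^{-1}u=1$, so it is all of $R_G$. Hence every proper based ideal is spanned by noninvertible basis elements and lies in $I_G$. Properness of $I_G$ is clear because $1\notin I_G$.

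\textbf{Quotient.} The quotient $R_G/I_G$ is free on the images of $\mathcal B_G^\times$. The set $\mathcal B_G^\times$ is a group under composition: the product of two invertible basis elements is a single invertible basis element, because multiplication by an invertible basis element permutes the basis. Thus $R_G/I_G\cong\mathbb Z[\mathcal B_G^\times]$. Under $\Phi_B$, the group $\mathcal B_G^\times$ is identified with the group of equivalence classes of invertible $\Vec_G$-bimodule categories, which is $\BrPic(\Vec_G)$. Here I use Theorem~\ref{thm: invertibility criterion} and the discussion before it: an invertible element of $R_G$ that is a basis element has an actual inverse that is also a basis element, and it corresponds to an invertible bimodule category.

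\textbf{Trace property and radical.} For basis elements $x,y$, I claim $\tau_G(xy)\neq0$ holds only when $x,y$ are invertible and $y=x^{-1}$, in which case $\tau_G(xy)=1=\tau_G(yx)$. By the proposition, the identity occurring in $xy$ forces both factors to be invertible. Then $xy$ is a single basis element, equal to $1$ exactly when $y=x^{-1}$, which is equivalent to $yx=1$. Bilinearity then gives $\tau_G(xy)=\tau_G(yx)$ for all $x,y$.

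For the pairing I need $x^\vee$ for invertible $x$. Duality is a basis-preserving anti-involution compatible with composition and fixes the identity, so it sends invertible basis elements to invertible ones. Categorically, for an invertible bimodule category $\mathcal M$, the dual $\mathcal M^{\vee}$ is its inverse, so $x^\vee=x^{-1}$. I expect this identification to be the main point needing care: either cite \cite{ENO2}, or verify it directly from $L^\dagger$ and Theorem~\ref{thm: invertibility criterion}.

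Granting this, for basis elements we get $\langle x,y\rangle_G=\tau_G(xy^\vee)$. This is nonzero iff $y^\vee$ is invertible and equal to $x^{-1}$, which holds iff $x$ is invertible and $y=x$. So the Gram matrix in the basis $\mathcal B_G$ is the identity on $\mathcal B_G^\times$ and zero elsewhere, and its radical is exactly the span of the noninvertible basis elements, namely $I_G$.
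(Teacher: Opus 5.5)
Your proposal is correct and follows the paper's route: every assertion is read off from Proposition~\ref{prop: invertible constituent}, and you simply spell out the steps the paper's proof leaves implicit (positivity, the group structure on $\mathcal B_G^\times$, and the inverse-basis-element argument). The identification $x^\vee=x^{-1}$ you flag is exactly what the paper tacitly uses for its displayed Gram matrix, and it is the standard fact that $\mathcal M^{\mathrm{op}}$ inverts an invertible bimodule category \cite{ENO2}; for the radical claim alone, however, it suffices that $(-)^\vee$ permutes $\mathcal B_G^\times$, because the Gram matrix is then a permutation matrix on the invertible part and zero elsewhere.
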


\begin{proof}
Proposition~\ref{prop: invertible constituent} shows that $I_G$ is a
two-sided based ideal and gives \eqref{eq: top quotient BrPic}.
Every proper based ideal contains no invertible basis element, hence is
contained in $I_G$.

For basis elements $b,c$, Proposition~\ref{prop: invertible constituent}
also gives
\[
 \langle b,c\rangle_G=
 \begin{cases}
  1,&b=c\in\mathcal B_G^\times,\\
  0,&\text{otherwise}.
 \end{cases}
\]
The assertions about the trace and the radical follow by linearity.
\end{proof}

\subsection{The bottom ideal and its Lagrangian Gram matrix}
\label{subsect: bottom sandwich ideal}

Recall that $\Omega_G$ is the distinguished basis of the one-sided decorated Burnside ring
$\OneBurn(G)=\DBurn(G,1) \cong K_0\bigl(\mathbf{Mod}(\Vec_G)\bigr)$.
Thus $\Omega_G$ consists of the conjugacy classes of pairs
$a=[H,\alpha]$, where $H\leq G$ and
$[\alpha]\in H^2(H,k^\times)$.  For $a,b\in\Omega_G$ define
\begin{equation}
\label{eq: bottom matrix units}
 E_{a,b}:=a\circ_1b^\vee\in R_G.
\end{equation}
These are distinct basis elements.  In subgroup--cocycle coordinates,
$E_{[H,\alpha],[K,\beta]}$ has product subgroup $H\times K$; restricting
its decoration to the two coordinate subgroups recovers $[\alpha]$ and the
dual of $[\beta]$.  If $\mathcal M_a$ denotes the indecomposable
$\Vec_G$-module category represented by $a$, then under $\Phi_{G,G}$,
$E_{a,b}$ corresponds to $\mathcal M_a\boxtimes\mathcal M_b^\vee$.

There are positive integers $c_{b,c}$ defined by
\begin{equation}
\label{eq: one sided composition pairing}
 b^\vee\circ_Gc=c_{b,c},
 \qquad b,c\in\Omega_G.
\end{equation}
The matrix $C_G=(c_{b,c})$ is symmetric.  More explicitly, for
$b=[H,\alpha]$ and $c=[K,\beta]$,
\begin{equation}
\label{eq: one sided composition matrix entries}
 c_{b,c}
 =\sum_{g\in H\backslash G/K}
 \left|\Irr
 \left(k_{\theta_g}[H\cap{}^gK]\right)\right|,
 \qquad
 [\theta_g]
 =\left[\alpha^{-1}|_{H\cap{}^gK}\,
 {}^g\!\beta|_{H\cap{}^gK}\right].
\end{equation}
This is a specialization of the Clifford formula; compare
\cite[Proposition~4.10]{EtingofKinserWalton}.  Symmetry also follows
directly by taking duals in \eqref{eq: one sided composition pairing}.

Let
\begin{equation}
\label{eq: bottom ideal definition}
 J_G:=\mathbb Z\text{-span}
 \{E_{a,b}\mid a,b\in\Omega_G\}.
\end{equation}

\begin{proposition}
\label{prop: bottom sandwich ideal}
The subgroup $J_G$ is a two-sided based ideal of $R_G$, and
\begin{equation}
\label{eq: bottom sandwich multiplication}
 E_{a,b}E_{c,d}=c_{b,c}E_{a,d}.
\end{equation}
It is the unique minimal nonzero based ideal of $R_G$.
\end{proposition}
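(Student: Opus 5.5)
The plan is to argue entirely inside the based $\mathbb Z$-linear category of Corollary~\ref{cor: decorated bisets and bimodule categories}, using associativity of Clifford composition across different endpoint groups.

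\textbf{Step 1: the multiplication rule.} By definition \eqref{eq: bottom matrix units} and associativity,
\[
 E_{a,b}E_{c,d}=(a\circ_1b^\vee)\circ_G(c\circ_1d^\vee)
 =a\circ_1(b^\vee\circ_Gc)\circ_1d^\vee .
\]
The middle factor lies in $\DBurn(1,1)\cong\mathbb Z$, and by \eqref{eq: one sided composition pairing} it equals $c_{b,c}$ times the identity $[1,1]$. Bilinearity then gives \eqref{eq: bottom sandwich multiplication}.

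\textbf{Step 2: $J_G$ is a two-sided ideal.} For any basis element $x\in R_G$ we have $x E_{a,b}=(x\triangleright a)\circ_1 b^\vee$. By Corollary~\ref{thm: one-sided Clifford action}, $x\triangleright a$ is a nonnegative combination of elements of $\Omega_G$, so $xE_{a,b}$ lies in the span of the $E_{a',b}$. For right multiplication, $E_{a,b}x=a\circ_1(x^\vee\triangleright b)^\vee$ by the duality anti-involution, and the same argument applies. Since the $E_{a,b}$ are distinct basis elements (as noted before the statement), $J_G$ is a based ideal.

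\textbf{Step 3: minimality and uniqueness.} Let $I\ne0$ be a based ideal. It contains some basis element $x=[L,\mu]$. I would show that $E_{a,b}\in I$ for every pair $a,b$.

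First pick $a_0,b_0$ with $E_{a,b_0}\,x\,E_{b_0',b}$ nonzero for suitable choices. The key observation is that
\[
 E_{a,c}\,x\,E_{d,b}=a\circ_1(c^\vee\circ_G x\circ_G d)\circ_1 b^\vee ,
\]
where $c^\vee\circ_Gx\circ_Gd\in\DBurn(1,1)=\mathbb Z$ is a nonnegative integer. It is in fact positive, because a product of nonempty transitive decorated bisets is nonzero; this positivity is noted before Theorem~\ref{thm: invertibility criterion}, since each Clifford composite has at least one double coset and one simple module.

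Hence $E_{a,c}xE_{d,b}=nE_{a,b}$ with $n>0$. Because $I$ is a based ideal, every distinguished-basis constituent of an element of $I$ with positive coefficient lies in $I$, so $E_{a,b}\in I$. Thus $J_G\subseteq I$.

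Finally, $J_G$ is nonzero because $\Omega_G$ is nonempty. So $J_G$ is a nonzero based ideal contained in every nonzero based ideal, which makes it the unique minimal one.

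\textbf{Main obstacle.} The delicate point is the definition of ``based ideal'': the argument needs it to mean that an ideal is spanned by a subset of the distinguished basis, so that positive constituents of its elements belong to it. The other point to check is that the positivity of $c^\vee\circ_Gx\circ_Gd$ holds for every $x$. That follows from the nonvanishing of composites of transitive decorated bisets, applied twice through the endpoint groups $1\to G\to G\to 1$.
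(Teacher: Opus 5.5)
Your proof is correct and follows the paper's argument: the multiplication rule comes from associativity through $\DBurn(1,1)\cong\mathbb Z$, the ideal property from the fact that composites with $E_{a,b}$ still factor through the trivial group, and minimality from sandwiching a basis element $x$ between bottom elements to obtain $nE_{a,b}$ with $n>0$. The only difference is cosmetic. You use positivity of $c^\vee\circ_G x\circ_G d$ for all $c,d$, which makes your stray sentence about first picking $a_0,b_0$ unnecessary. The paper instead chooses $b$ occurring in $x\circ_G c$.
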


\begin{proof}
Associativity of Clifford composition and
\eqref{eq: one sided composition pairing} give
\[
 (a\circ_1b^\vee)(c\circ_1d^\vee)
 =a\circ_1(b^\vee\circ_Gc)\circ_1d^\vee
 =c_{b,c}E_{a,d}.
\]
More generally, composing on either side with an arbitrary element of
$R_G$ still factors through the trivial group, so $J_G$ is a two-sided
based ideal.

Let $K$ be a nonzero based ideal and choose a basis element $x\in K$.
For any $c\in\Omega_G$, the one-sided product $x\circ_Gc$ is nonzero; choose
$b\in\Omega_G$ occurring in it.  Then
$b^\vee\circ_Gx\circ_Gc=n$ for some $n>0$.  Consequently, for every
$a,d\in\Omega_G$,
$E_{a,b}\,x\,E_{c,d}=nE_{a,d}$.
Because $K$ is based, it contains $E_{a,d}$ for all $a,d$.  Hence
$J_G\subseteq K$.
\end{proof}

The composition matrix appearing in the bottom ideal has a categorical
analogue for every fusion category.  We use \emph{Lagrangian algebra} in its
standard sense:
a connected étale algebra of maximal Frobenius--Perron dimension in a
nondegenerate braided fusion category.

\begin{theorem}
\label{thm: Lagrangian Gram matrix}
Let $\mathcal C$ be a fusion category, let
$\{\mathcal M_i\}_{i\in I}$ represent its indecomposable module categories,
and let $A_i\in\Z(\mathcal C)$ be the Lagrangian algebra associated with
$\mathcal M_i$.  Then
\begin{equation}
\label{eq: Lagrangian Gram identity}
 \operatorname{rk}\operatorname{Fun}_{\mathcal C}
       (\mathcal M_i,\mathcal M_j)
 =\dim\Hom_{\Z(\mathcal C)}(A_i,A_j).
\end{equation}
If $X$ ranges over the simple objects of $\Z(\mathcal C)$, put
$m_{X,i}:=\dim\Hom_{\Z(\mathcal C)}(X,A_i)$ and $M=(m_{X,i})_{X,i}$.
Then the matrix on the left-hand side of
\eqref{eq: Lagrangian Gram identity} is
\begin{equation}
\label{eq: Lagrangian Gram factorization}
 M^tM.
\end{equation}
In particular, it is positive semidefinite, and it is nonsingular if and
only if the classes $[A_i]$ are linearly independent in
$K_0(\Z(\mathcal C))_{\mathbb Q}$.
\end{theorem}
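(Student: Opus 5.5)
The plan is to identify both sides of \eqref{eq: Lagrangian Gram identity} with the same Hom space in $\Z(\mathcal C)$, using the standard correspondence between indecomposable $\mathcal C$-module categories and Lagrangian algebras. I will first recall how $A_i$ arises. Let $\mathcal C^*_{\mathcal M_i}=\Fun_{\mathcal C}(\mathcal M_i,\mathcal M_i)$. The forgetful functor $F_i:\Z(\mathcal C)\simeq\Z(\mathcal C^*_{\mathcal M_i})\to\mathcal C^*_{\mathcal M_i}$ is a central functor, and $A_i=I_i(\mathbf 1)$, where $I_i$ is its right adjoint. Equivalently, $\Z(\mathcal C)$ acts on $\mathcal M_i$ through $\alpha$-induction, and $A_i$ is the internal End of the identity functor. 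The key step is to express $\Fun_{\mathcal C}(\mathcal M_i,\mathcal M_j)$ as the category of $A_i$--$A_j$ bimodules, or more precisely as $\operatorname{Mod}_{A_j}$-type data, inside $\Z(\mathcal C)$. Under the 2-equivalence between $\mathcal C$-module categories and $\Z(\mathcal C)$-module categories (quoted in the introduction via \cite{ENO2,Gr}), $\mathcal M_i$ corresponds to $\Fun_{\mathcal C|\mathcal C}(\mathcal C,\mathcal M_i\boxtimes\mathcal M_i^{\vee})$-type data. On the $\Z(\mathcal C)$ side, $\mathcal M_i$ corresponds to $\Z(\mathcal C)_{A_i}$, the category of $A_i$-modules, and $\Fun_{\mathcal C}(\mathcal M_i,\mathcal M_j)$ corresponds to the category of $A_i$--$A_j$-bimodules in $\Z(\mathcal C)$.

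The rank computation proceeds as follows. Because $A_i$ and $A_j$ are commutative algebras in the braided category, the bimodule category ${}_{A_i}\Z(\mathcal C)_{A_j}$ is equivalent to the category of right $A_i\otimes A_j$-modules. The simple objects of this module category correspond to its decomposition. I would rather count ranks via internal Homs. For a semisimple module category $\mathcal N$ over $\Z(\mathcal C)$ realized as $\operatorname{Mod}_B$, the rank equals $\dim\Hom(\mathbf 1,B)$ only when $\mathcal N$ is of a special form. So the cleaner route is the following. Take the bimodule category $\mathcal E=\Fun_{\mathcal C}(\mathcal M_i,\mathcal M_j)$, a $\mathcal C^*_{\mathcal M_j}$--$\mathcal C^*_{\mathcal M_i}$ bimodule category. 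Since both $\mathcal C^*$'s have center $\Z(\mathcal C)$, the Lagrangian algebras are full centres, and there is the standard identity (Kong--Runkel, Davydov--Müger--Nikshych--Ostrik)
$$\dim\Hom_{\Z(\mathcal C)}(A_i,A_j)=\operatorname{rk}\Fun_{\mathcal C}(\mathcal M_i,\mathcal M_j).$$
I would prove it by writing $A_i=\bigoplus_X m_{X,i}X$. Then $\dim\Hom(A_i,A_j)=\sum_X m_{X,i}m_{X,j}$, which also establishes the factorization $M^tM$ immediately once the identity is known.

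To prove the identity itself, compute $\operatorname{rk}\Fun_{\mathcal C}(\mathcal M_i,\mathcal M_j)$ as $\dim\Hom_{\Z(\mathcal C)}(\mathbf 1,\underline{\End}(\mathrm{id}))$ for the $\Z(\mathcal C)$-module category ${}_{A_i}\Z(\mathcal C)_{A_j}$. Since $\Z(\mathcal C)$ is nondegenerate, ${}_{A_i}\Z(\mathcal C)_{A_j}\simeq\Z(\mathcal C)_{A_i\otimes A_j}$ as module categories. Its simples correspond to simple $A_i\otimes A_j$-modules, which correspond to minimal idempotents. Because $A_i\otimes A_j$ is étale (separable and commutative), the number of simple local summands... this is where care is needed. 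I would instead use the adjunction
$$\Hom_{\Z(\mathcal C)_{A_i\otimes A_j}}(A_i\otimes A_j,A_i\otimes A_j)=\Hom_{\Z(\mathcal C)}(\mathbf 1,A_i\otimes A_j)=\Hom(A_i^*,A_j)\cong\Hom(A_i,A_j),$$
using self-duality of Lagrangian algebras. I would then show that the free module $A_i\otimes A_j$ contains every simple of $\Fun_{\mathcal C}(\mathcal M_i,\mathcal M_j)$ with multiplicity one. This multiplicity-one step is the main obstacle. I would prove it through the identification with $\mathcal C$-module functors: the free module corresponds to the functor $\underline{\Hom}$-object of $\mathcal M_i\boxtimes\mathcal M_j^{\mathrm{op}}$ under the equivalence $\Z(\mathcal C)_{A_i\otimes A_j}\simeq\Fun_{\mathcal C}(\mathcal M_i,\mathcal M_j)$. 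That object is the canonical "regular" functor $\bigoplus_F F$, whose endomorphism algebra is commutative of dimension equal to the rank. Positive semidefiniteness and the nonsingularity criterion follow from $M^tM$ by linear algebra: $M^tM$ is nonsingular iff the columns of $M$, which are the classes $[A_i]$, are independent.
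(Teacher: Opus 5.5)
Your plan has a genuine gap: it rests on identifying $\operatorname{Fun}_{\mathcal C}(\mathcal M_i,\mathcal M_j)$ with ${}_{A_i}\Z(\mathcal C)_{A_j}\simeq\Z(\mathcal C)_{A_i\otimes A_j}$, and that identification is false.

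Under the $2$-equivalence $\mathcal X\mapsto\operatorname{Fun}_{\mathcal C|\mathcal C}(\mathcal C,\mathcal X)$, the $\Z(\mathcal C)$-module category $\Z(\mathcal C)_{A_i}\simeq\mathcal C^*_{\mathcal M_i}$ corresponds to the bimodule category $\mathcal M_i\boxtimes\mathcal M_i^\vee$, not to $\mathcal M_i$. Hence ${}_{A_i}\Z(\mathcal C)_{A_j}\simeq\operatorname{Fun}_{\Z(\mathcal C)}(\Z(\mathcal C)_{A_i},\Z(\mathcal C)_{A_j})$ corresponds to bimodule functors $\mathcal M_i\boxtimes\mathcal M_i^\vee\to\mathcal M_j\boxtimes\mathcal M_j^\vee$. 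That category is a Deligne product of two one-sided functor categories, so it has rank $r^2$, where $r=\operatorname{rk}\operatorname{Fun}_{\mathcal C}(\mathcal M_i,\mathcal M_j)$.

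A concrete check: take $\mathcal C=\Vec_G$ and $\mathcal M_i=\mathcal M_j=\Vec_G$. Then $A$ is the regular representation $k^G$ placed in degree $e$, and $\dim\Hom(A,A)=|G|$. But $\Z(\Vec_G)_{A\otimes A}$ is the category of equivariant bundles on $G\times G\times G$ for a free $G$-action, so it has rank $|G|^2$. The free module $A\otimes A$ contains only $|G|$ of these simples. So the step you flag as the main obstacle ("the free module contains every simple with multiplicity one") cannot be proved as stated.

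What is true is automatic and weaker. Since $A_i\otimes A_j$ is commutative and separable, $\End(A_i\otimes A_j)\cong\Hom(\mathbf 1,A_i\otimes A_j)$ is commutative semisimple. So the free module is multiplicity free, and $\dim\Hom(A_i,A_j)$ counts its simple summands, i.e.\ the connected components of the \'etale algebra $A_i\otimes A_j$. Matching those components with the simple objects of $\operatorname{Fun}_{\mathcal C}(\mathcal M_i,\mathcal M_j)$ is the entire content of the theorem, and your proposal leaves it unproved. Quoting the ``standard identity'' of Kong--Runkel or DMNO simply assumes the statement.

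The repair starts from your first paragraph, and it is the paper's route.

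1. Transport along the Morita equivalence to $\mathcal D=(\mathcal C^*_{\mathcal M_i})^{\mathrm{rev}}$. Then $A_i$ becomes the canonical Lagrangian algebra, the image of $\mathbf 1$ under the adjoint of the forgetful functor $F:\Z(\mathcal D)\to\mathcal D$. Likewise $A_j$ becomes the Lagrangian algebra $A_{\mathcal P}$ of the $\mathcal D$-module category $\mathcal P=\operatorname{Fun}_{\mathcal C}(\mathcal M_i,\mathcal M_j)$.
2. Adjunction gives $\Hom(A_i,A_j)\cong\Hom_{\mathcal D}(\mathbf 1,F(A_{\mathcal P}))$.
3. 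The missing ingredient is the underlying-object formula $F(A_{\mathcal P})\cong\bigoplus_{p\in\Irr(\mathcal P)}\underline{\Hom}_{\mathcal D}(p,p)$, which the paper takes from Shimizu's coend description.
4. Since $\Hom_{\mathcal D}(\mathbf 1,\underline{\Hom}_{\mathcal D}(p,p))\cong\End_{\mathcal P}(p)$ is one-dimensional, the rank of $\mathcal P$ follows directly.

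This is the correct version of your internal-Hom rank count: it must be done in $\mathcal D$ for the module category $\mathcal P$, not in $\Z(\mathcal C)$, and $A_i\otimes A_j$ is not needed. Your deduction of $M^tM$, positive semidefiniteness and the nonsingularity criterion is fine.
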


\begin{proof}
Fix $i,j$ and put
\[
 \mathcal C_{\mathcal M_i}^*
 :=\operatorname{Fun}_{\mathcal C}(\mathcal M_i,\mathcal M_i),
 \qquad
 \mathcal D:=(\mathcal C_{\mathcal M_i}^*)^{\mathrm{rev}},
 \qquad
 \mathcal P
 :=\operatorname{Fun}_{\mathcal C}(\mathcal M_i,\mathcal M_j).
\]
Morita transport gives a braided equivalence
$\Z(\mathcal C)\simeq\Z(\mathcal D)$ that carries $A_i$ to the canonical
Lagrangian algebra $A_{\mathcal D}$ and $A_j$ to the Lagrangian algebra
$A_{\mathcal P}$ associated with the $\mathcal D$-module category
$\mathcal P$; see \cite[Section~2.9]{ENO2} and
\cite[Proposition~4.8]{DMNO}.  Let
$F_{\mathcal D}:\Z(\mathcal D)\to\mathcal D$ be the forgetful functor.
As an object of $\Z(\mathcal D)$, $A_{\mathcal D}$ is also the image of
$\mathbf1_{\mathcal D}$ under the left adjoint of $F_{\mathcal D}$.
Moreover, by \cite[Corollary~3.15]{ShimizuCoends},
\begin{equation}
\label{eq: underlying Lagrangian algebra}
 F_{\mathcal D}(A_{\mathcal P})
 \cong\int_{p\in\mathcal P}\uHom_{\mathcal D}(p,p)
 \cong
 \bigoplus_{p\in\Irr(\mathcal P)}
 \uHom_{\mathcal D}(p,p),
\end{equation}
where $\uHom_{\mathcal D}$ denotes the internal Hom.
Using the left adjunction for $A_{\mathcal D}$ and then the defining
property of the internal Hom gives
\begin{align*}
 \dim\Hom_{\Z(\mathcal D)}(A_{\mathcal D},A_{\mathcal P})
 &=
 \sum_{p\in\Irr(\mathcal P)}
 \dim\Hom_{\mathcal D}
       \bigl(\mathbf 1_{\mathcal D},\uHom_{\mathcal D}(p,p)\bigr)\\
 &=
 \sum_{p\in\Irr(\mathcal P)}
 \dim\End_{\mathcal P}(p)
 =\operatorname{rk}(\mathcal P).
\end{align*}
This proves \eqref{eq: Lagrangian Gram identity}.

Since $\Z(\mathcal C)$ is semisimple, decomposing the two algebras into
simple objects gives
$\dim\Hom_{\Z(\mathcal C)}(A_i,A_j) =\sum_Xm_{X,i}m_{X,j}$.
This is \eqref{eq: Lagrangian Gram factorization}.  Finally,
$M^tM$ and $M$ have the same kernel over $\mathbb Q$, which proves the last
assertion.
\end{proof}

For $\mathcal C=\Vec_G$, let $A_a\in\Z(\Vec_G)$ denote the Lagrangian
algebra associated with $a\in\Omega_G$.  The relative tensor-product
equivalence
$\mathcal M_b^\vee\boxtimes_{\Vec_G}\mathcal M_c \simeq\operatorname{Fun}_{\Vec_G}(\mathcal M_b,\mathcal M_c)$
from \cite[Proposition~3.5]{ENO2} identifies
\begin{equation}
\label{eq: sandwich coefficients Lagrangian Hom}
 c_{b,c}
 =\operatorname{rk}\operatorname{Fun}_{\Vec_G}
       (\mathcal M_b,\mathcal M_c)
 =\dim\Hom_{\Z(\Vec_G)}(A_b,A_c).
\end{equation}
Thus $C_G$ is the Gram matrix of the underlying object classes $[A_a]$.
In particular,
\begin{equation}
\label{eq: sandwich kernel Lagrangian relations}
 \ker(C_G)
 =
 \left\{(\lambda_a)_{a\in\Omega_G}\ \middle|\
 \sum_{a\in\Omega_G}\lambda_a[A_a]=0
 \text{ in }K_0(\Z(\Vec_G))_{\mathbb C}\right\}.
\end{equation}

\begin{remark}
In the classical Burnside setting, the integral kernel of the Gram
matrix of permutation characters consists precisely of the Brauer
relations among the corresponding permutation representations.
Here the permutation representations are replaced by the underlying
classes of the Lagrangian algebras $A_a$.  It would be interesting to study
these Lagrangian relations in their own right.
\end{remark}

We now pass to the complexified bottom ideal.  Put
$V_G:=\mathbb C\Omega_G$, $\mathcal J_G:=\mathbb C\otimes_{\mathbb Z}J_G$,
and $n_G:=|\Omega_G|$.
The matrix $C_G$ is the matrix of the symmetric bilinear form
$c_G$ on $V_G$ defined by
$c_G(b,c)=c_{b,c}$.  Under the vector-space identification
$\mathcal J_G\cong V_G\otimes V_G$, with $E_{a,b}\longleftrightarrow a\otimes b$,
the product is contraction of the two middle factors by $c_G$:
$(a\otimes b)(c\otimes d)=c_G(b,c)a\otimes d$.
Equivalently, after ordering $\Omega_G$ and identifying $E_{a,b}$ with a
matrix unit,
\begin{equation}
\label{eq: sandwich matrix product}
 X*Y=XC_GY.
\end{equation}
The vector space of matrices with this multiplication is called a
\emph{sandwich algebra}.

For the canonical representation $\rho_G$ on $\mathbb Z\Omega_G$
of Corollary~\ref{thm: one-sided Clifford action}, given by
$\rho_G(x)(c)=x\circ_Gc$, one has, for $a,b,c\in\Omega_G$,
\begin{equation}
\label{eq: bottom ideal one sided action}
\begin{aligned}
 \rho_G(E_{a,b})(c)
 &=E_{a,b}\circ_Gc
 =a\circ_1(b^\vee\circ_Gc)
 =c_{b,c}a,\\
 \rho_G(X)&=XC_G.
\end{aligned}
\end{equation}
In the second formula we use the same notation for the complexified
action on $V_G$ and take $X\in\mathcal J_G$.
Thus each basis element $E_{a,b}\in J_G$ acts on $\mathbb Z\Omega_G$
by the rank-one operator $c\mapsto c_G(b,c)a$.
The action of $J_G$ is therefore determined by the bilinear form $c_G$,
which may be degenerate.

\subsection{Non-faithfulness of the canonical one-sided action}
\label{subsect: nonfaithful one-sided action}

In addition to $V_G=\mathbb C\Omega_G$ and
$\mathcal J_G=\mathbb C\otimes_{\mathbb Z}J_G$, put
\[
 V_{G,\mathbb Q}:=\mathbb Q\Omega_G,
 \qquad
 J_{G,\mathbb Q}:=\mathbb Q\otimes_{\mathbb Z}J_G.
\]
Under the matrix identification above, the rationalized bottom-ideal
action is
\[
 J_{G,\mathbb Q}\longrightarrow
 \operatorname{End}_{\mathbb Q}(V_{G,\mathbb Q}),
 \qquad
 X\longmapsto XC_G.
\]

\begin{proposition}
\label{prop: one-sided action nonfaithful}
The canonical representation
$\rho_G:R_G\to\operatorname{End}_{\mathbb Z}(\OneBurn(G))$
is faithful if and only if $G=1$.  Suppose $G\neq1$.  If $C_G$ is
singular, then $\ker(\rho_G)\cap J_G\neq0$.  If $C_G$ is nonsingular,
then $X\mapsto XC_G$ gives an isomorphism
\[
 J_{G,\mathbb Q}\xrightarrow{\ \sim\ }
 \operatorname{End}_{\mathbb Q}(V_{G,\mathbb Q}).
\]
In the latter case, let $d$ be a positive integer such that
$dC_G^{-1}$ is integral, and let $X_d\in J_G$ correspond to
$dC_G^{-1}$.  Then $d\,\mathbf1_G-X_d$ is a nonzero element of
$\ker(\rho_G)$.
\end{proposition}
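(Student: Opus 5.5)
We treat the case $G=1$ first and then split $G\neq1$ according to whether $C_G$ is singular, using the sandwich description of $J_G$ from \eqref{eq: sandwich matrix product} and \eqref{eq: bottom ideal one sided action}.

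For $G=1$, $R_1=\DBurn(1,1)$ has the single basis element $[1,1]$, so $R_1\cong\mathbb Z$. Likewise $\OneBurn(1)\cong\mathbb Z$, and the unit acts as the identity, so $\rho_1$ is faithful. From now on assume $G\neq1$.

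The first step is a rank count that applies in both cases. The lattice $J_G$ has rank $n_G^2$, since the $E_{a,b}$ are distinct basis elements. The identity $\mathbf 1_G=[\Delta(G),1]$ does not lie in $J_G$ when $G\neq1$. Indeed, its subgroup $\Delta(G)$ is not of the form $H\times K$, because its coordinate kernels are trivial while $p_1(\Delta(G))=G\neq1$. Equivalently, $\mathbf 1_G$ is invertible, while $J_G\subseteq I_G$ by Corollary~\ref{cor: singular ideal}.

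\emph{Case $C_G$ singular.} Choose a nonzero rational vector $v$ with $C_Gv=0$; by \eqref{eq: sandwich kernel Lagrangian relations} this is the same as a nontrivial relation among the classes $[A_a]$. Clear denominators to get an integral $v$. Then take $X\in J_G$ corresponding to the matrix $e_a v^{t}$ for some fixed $a$; under the identification, $X=\sum_b v_bE_{a,b}$. Since $\rho_G(X)=XC_G=e_a(v^tC_G)=e_a(C_Gv)^t=0$ by symmetry of $C_G$, and $X\neq0$, this $X$ is a nonzero element of $\ker(\rho_G)\cap J_G$. So $\rho_G$ is not faithful.

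\emph{Case $C_G$ nonsingular.} The map $X\mapsto XC_G$ from $n_G\times n_G$ rational matrices to $\operatorname{End}_{\mathbb Q}(V_{G,\mathbb Q})$ is then bijective, which gives the stated isomorphism. Choose $d>0$ with $dC_G^{-1}$ integral, and let $X_d=\sum (dC_G^{-1})_{a,b}E_{a,b}\in J_G$. Then $\rho_G(X_d)=dC_G^{-1}C_G=d\cdot\id$, while $\rho_G(d\mathbf 1_G)=d\cdot\id$ by unitality in Corollary~\ref{thm: one-sided Clifford action}. Hence $d\mathbf 1_G-X_d\in\ker\rho_G$. This element is nonzero because its coefficient on the basis element $\mathbf 1_G$ is $d\neq0$, and $\mathbf 1_G$ is not among the $E_{a,b}$.

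The only step needing any care is this last one, namely checking that $\mathbf 1_G\notin J_G$ for $G\neq1$. The basedness of the ideals handles it, via the subgroup-shape argument above or via Proposition~\ref{prop: invertible constituent}. Everything else is linear algebra on the sandwich algebra.
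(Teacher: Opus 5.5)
Your proposal is correct and follows the paper's proof essentially step for step. Both use the bottom-ideal action $X\mapsto XC_G$: in the singular case you take an integral matrix annihilated by $C_G$, and in the nonsingular case you take $d\,\mathbf1_G-X_d$, which is nonzero because $\mathbf1_G\notin J_G\subseteq I_G$. Your explicit rank-one choice $X=e_av^t$ and your check of the coefficient of $\mathbf1_G$ are concrete versions of the paper's steps, not a different route.
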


\begin{proof}
For $G=1$, both $R_G$ and $\Omega_k^{(2)}(G)$ are canonically
$\mathbb Z$, and the action is ordinary multiplication, hence faithful.

Assume $G\neq1$.  By the bottom-ideal action formula
\eqref{eq: bottom ideal one sided action}, the restriction of $\rho_G$ to
$J_G$, after ordering the basis $\Omega_G$, is the matrix map
$X\mapsto XC_G$.

If $C_G$ is singular, there is a nonzero rational matrix $X$ with
$XC_G=0$.  Clearing denominators gives a nonzero integral matrix, still
denoted $X$, and hence a nonzero element of $J_G$ annihilating
$\Omega_k^{(2)}(G)$.  Thus $\ker(\rho_G)\cap J_G\neq0$.

Suppose now that $C_G$ is nonsingular.  Then right multiplication by
$C_G$ is an isomorphism
\[
 J_{G,\mathbb Q}\xrightarrow{\ \sim\ }
 \operatorname{End}_{\mathbb Q}(V_{G,\mathbb Q}).
\]
Choose $d>0$ such that $dC_G^{-1}$ is integral and let $X_d\in J_G$ be the
corresponding matrix element.  Since
\[
 \rho_G(X_d)=d\,\operatorname{id}_{V_G}
 =\rho_G(d\,\mathbf1_G),
\]
the element $d\,\mathbf1_G-X_d$ lies in $\ker(\rho_G)$.

It is nonzero.  Indeed, for $G\neq1$, the bottom ideal satisfies
$J_G\subseteq I_G$, while $\mathbf1_G$ is an invertible distinguished
basis element and hence does not belong to $I_G$.  Therefore
$d\,\mathbf1_G\notin J_G$, so it cannot equal $X_d$.  This proves the
proposition.
\end{proof}

\begin{example}
With the notation $\mathbf1,\omega,q,\ell,r,e$ of
Example~\ref{ex: decorated double Burnside C2}, the action table in
Example~\ref{ex: one-sided decorated Burnside C2 action} gives
\[
 \ker(\rho_{C_2})
 =
 \mathbb Z\bigl(\ell+r-\mathbf1-2\omega\bigr)
 \oplus
 \mathbb Z\bigl(q+e-2\mathbf1-\omega\bigr).
\]
Indeed, applying either displayed relation to $\xi$ and to $\tau$ gives
zero directly from that table.  Row reduction of the resulting four by six
integer action matrix shows that these two relations generate the full
kernel.
\end{example}

\subsection{The radical and nonsemisimplicity of the bottom ideal}
\label{subsect: radical bottom ideal}

\begin{proposition}
\label{prop: radical sandwich algebra}
Let $r_G=\operatorname{rank}_{\mathbb C}(C_G)$ and
$K_G=\operatorname{Rad}(c_G)$.  Then
\begin{equation}
\label{eq: radical sandwich tensor}
 \operatorname{Jac}(\mathcal J_G)
 =K_G\otimes V_G+V_G\otimes K_G
 =\{X\in\operatorname{Mat}_{n_G}(\mathbb C)\mid C_GXC_G=0\}.
\end{equation}
Moreover,
\begin{equation}
\label{eq: sandwich semisimple quotient}
 \mathcal J_G/\operatorname{Jac}(\mathcal J_G)
 \cong\operatorname{Mat}_{r_G}(\mathbb C),
 \qquad
 \dim\operatorname{Jac}(\mathcal J_G)=n_G^2-r_G^2.
\end{equation}
\end{proposition}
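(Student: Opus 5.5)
We work in the matrix model of \eqref{eq: sandwich matrix product}: $\mathcal J_G=\operatorname{Mat}_{n_G}(\mathbb C)$ with product $X*Y=XC_GY$, where $C_G$ is symmetric of rank $r_G$. The plan is to change bases so that $C_G$ becomes block diagonal. Choose an invertible complex matrix $P$ with $P^tC_GP=D:=\operatorname{diag}(I_{r_G},0)$; this is possible because $C_G$ is symmetric and real (indeed positive semidefinite by Theorem~\ref{thm: Lagrangian Gram matrix}). The map $X\mapsto P^{-1}X(P^t)^{-1}$ is an algebra isomorphism from $(\operatorname{Mat}_{n_G},*_{C_G})$ onto $(\operatorname{Mat}_{n_G},*_D)$. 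Indeed,
\[
 P^{-1}XC_GY(P^t)^{-1}=\bigl(P^{-1}X(P^t)^{-1}\bigr)\,D\,\bigl(P^{-1}Y(P^t)^{-1}\bigr).
\]
Moreover, the condition $C_GXC_G=0$ transforms into $DX'D=0$. In the tensor description, the basis change is the same map $P^{-1}$ applied to both tensor factors, and it carries $K_G$ to $\operatorname{span}\{e_{r_G+1},\dots,e_{n_G}\}$. So it suffices to treat $C=D$.

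With $C=D$, write $X=\begin{pmatrix}X_{11}&X_{12}\\X_{21}&X_{22}\end{pmatrix}$ in blocks of sizes $r_G$ and $n_G-r_G$. Then $X*Y$ has blocks $X_{i1}Y_{1j}$. Define $J$ as the set of $X$ with $X_{11}=0$; this is precisely $\{X\mid DXD=0\}$, and also precisely $K\otimes V+V\otimes K$, where $K=\operatorname{span}\{e_{r_G+1},\dots,e_{n_G}\}$. We check three things.

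First, $J$ is a two-sided ideal. The $(1,1)$ block of $X*Y$ is $X_{11}Y_{11}$, which vanishes when either factor lies in $J$.

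Second, $J$ is nilpotent. For $X,Y,Z\in J$ we have $X*Y*Z=XDYDZ$, and $DYD=0$, so $J^3=0$. Hence $J\subseteq\operatorname{Jac}$.

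Third, the quotient is semisimple. The map $X\mapsto X_{11}$ is a surjective algebra homomorphism onto $\operatorname{Mat}_{r_G}(\mathbb C)$ with ordinary multiplication, because $(X*Y)_{11}=X_{11}Y_{11}$. Its kernel is $J$. Since the quotient is semisimple, $\operatorname{Jac}\subseteq J$.

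Together these give $\operatorname{Jac}(\mathcal J_G)=J$, which is \eqref{eq: radical sandwich tensor}. They also give $\mathcal J_G/\operatorname{Jac}\cong\operatorname{Mat}_{r_G}(\mathbb C)$, and the dimension count $n_G^2-r_G^2$ follows.

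The expected main obstacle is bookkeeping under the basis change rather than any conceptual difficulty. One must check that the congruence $C\mapsto P^tCP$ pairs with the transformation $X\mapsto P^{-1}X(P^t)^{-1}$ compatibly, so that both descriptions of the radical correspond. This holds because $\operatorname{Rad}(c_G)=\ker C_G=P\,\ker D$.

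One degenerate case needs a remark: if $r_G=0$, the quotient is zero and the statement is read accordingly. This cannot actually occur, because every $c_{b,b}>0$.
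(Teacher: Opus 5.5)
Your proof is correct and follows essentially the paper's argument: a surjective homomorphism onto $\operatorname{Mat}_{r_G}(\mathbb C)$ whose kernel $K_G\otimes V_G+V_G\otimes K_G$ cubes to zero, with the congruence of $C_G$ to $\operatorname{diag}(I_{r_G},0)$ giving the matrix description $\{X\mid C_GXC_G=0\}$. The only difference is order: you do the congruence first and work in block coordinates, whereas the paper builds the homomorphism coordinate-free by contraction on $V_G/K_G$ and uses the row and column reduction only at the end.
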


\begin{proof}
Pass from $V_G$ to the quotient
$\overline V_G=V_G/K_G$.  The form induced by $c_G$ on
$\overline V_G$ is nondegenerate, and contraction identifies
\[
 \overline V_G\otimes\overline V_G
 \cong\operatorname{End}(\overline V_G)
 \cong\operatorname{Mat}_{r_G}(\mathbb C).
\]
The kernel of the resulting algebra homomorphism from $\mathcal J_G$ is
$K_G\otimes V_G+V_G\otimes K_G$.  Its square is contained in
$K_G\otimes K_G$, which annihilates $\mathcal J_G$ on both sides; hence its
cube is zero.  The kernel is therefore the Jacobson radical, and the
dimension formula follows.  Reducing $C_G$ by invertible row and column
operations to $\operatorname{diag}(I_{r_G},0)$ gives the equivalent matrix
description \eqref{eq: radical sandwich tensor}.
\end{proof}

Since $\mathcal J_G$ is an ideal in
$\mathcal A_G:=\mathbb C\otimes_{\mathbb Z}R_G$, the hereditary property of
the Jacobson radical for finite-dimensional algebras gives
\begin{equation}
\label{eq: bottom radical in full radical}
 \operatorname{Jac}(\mathcal J_G)
 =\mathcal J_G\cap\operatorname{Jac}(\mathcal A_G).
\end{equation}
Hence a singular $C_G$ implies that the full complexified ring is not
semisimple.

\begin{lemma}
\label{lem: untwisted mark factorization}
Let $\Omega_G^0\subseteq\Omega_G$ consist of the elements $[H,1]$, and
let $C_G^0$ be the corresponding principal submatrix of $C_G$.
Let $\mathcal A_2(G)$ represent the conjugacy classes of abelian
subgroups of $G$ generated by at most two elements.  Put
$m_A(H)=|(G/H)^A|$ and
$M=(m_A(H))_{A\in\mathcal A_2(G),\,[H,1]\in\Omega_G^0}$.
Let $D=\operatorname{diag}(d_A)_{A\in\mathcal A_2(G)}$, where
\[
 d_A=
 \frac{|\{(x,y)\in A^2:\langle x,y\rangle=A\}|}{|N_G(A)|}.
\]
Then
\begin{equation}
\label{eq: untwisted mark factorization}
 C_G^0=M^tDM.
\end{equation}
Consequently,
\begin{equation}
\label{eq: untwisted sandwich rank}
 \operatorname{rank}(C_G^0)=|\mathcal A_2(G)|,
 \qquad
 \dim\ker(C_G^0)=|\Omega_G^0|-|\mathcal A_2(G)|.
\end{equation}
In particular, $C_G^0$ is nonsingular if and only if $G$ is abelian and
generated by at most two elements.
\end{lemma}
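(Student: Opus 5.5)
Start from formula \eqref{eq: one sided composition matrix entries} with trivial cocycles: for $b=[H,1]$, $c=[K,1]$,
\[
 c_{b,c}=\sum_{g\in H\backslash G/K}\bigl|\Irr(k[H\cap{}^gK])\bigr|
 =\sum_{g\in H\backslash G/K}\#\{\text{conjugacy classes of }H\cap{}^gK\}.
\]
The number of conjugacy classes of a finite group $S$ equals $|\{(x,y)\in S^2: xy=yx\}|/|S|$. So I will rewrite each summand as a count of commuting pairs and then reorganize the double-coset sum as a sum over $G$-orbits on $G/H\times G/K$. The $G$-orbit of $(H,gK)$ has stabilizer $H\cap{}^gK$. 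Hence
\[
 c_{b,c}=\frac1{|G|}\sum_{(x,y)\ \mathrm{commuting\ in}\ G}\bigl|(G/H\times G/K)^{\langle x,y\rangle}\bigr|.
\]
This is a standard Burnside/orbit-counting step: the sum over orbits of commuting-pair counts in stabilizers, divided by stabilizer order, equals the sum over points of the same quantity divided by $|G|$. Exchanging the order of summation gives the displayed identity.

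Next I group the commuting pairs by the subgroup $A=\langle x,y\rangle$. This subgroup is abelian and generated by at most two elements, and conversely every such $A$ arises. The fixed-point count factors: $|(G/H\times G/K)^A|=m_A(H)\,m_A(K)$, and it depends only on the conjugacy class of $A$. The number of pairs generating some conjugate of a fixed $A$ is $[G:N_G(A)]\cdot|\{(x,y)\in A^2:\langle x,y\rangle=A\}|$. Dividing by $|G|$ gives exactly $d_A$, so $C_G^0=M^tDM$.

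For the rank statement, $D$ is positive diagonal, so $\operatorname{rank}(M^tDM)=\operatorname{rank}(M)$ over $\mathbb R$. It therefore suffices that $M$ has full row rank $|\mathcal A_2(G)|$. Here $M$ is a submatrix of the table of marks: its rows are indexed by the classes in $\mathcal A_2(G)$ and its columns by all subgroup classes $[H]$. Restricting to the columns $H\in\mathcal A_2(G)$, and ordering by subgroup size, gives a triangular matrix. Indeed $m_A(H)\ne0$ forces $A$ to be subconjugate to $H$, and the diagonal entries $|N_G(A)/A|$ are nonzero. Hence the rows are independent, which proves \eqref{eq: untwisted sandwich rank}.

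Finally, $C_G^0$ is nonsingular exactly when every subgroup class lies in $\mathcal A_2(G)$. In particular $G$ itself must, so $G$ is abelian and $2$-generated. Conversely, if $G$ is abelian and $2$-generated, then all its subgroups are abelian and $2$-generated. The main care point is the orbit-reorganization step, where I need to check the normalizations so that the identity holds exactly, with no stray factor.
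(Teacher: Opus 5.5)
Your proposal is correct and follows essentially the same route as the paper's proof. Both rewrite the class numbers $k(H\cap{}^gK)$ as commuting-pair counts over the orbits of $G/H\times G/K$, group the pairs by the conjugacy class of $A=\langle x,y\rangle$ to obtain $d_A\,m_A(H)m_A(K)$, and prove full row rank of $M$ by the triangular table-of-marks argument on the columns indexed by $\mathcal A_2(G)$. One small point is left implicit in your triangularity step: a subgroup that is subconjugate to another of the same order is conjugate to it. This is exactly what makes the square submatrix triangular with respect to any order by subgroup size.
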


\begin{proof}
For subgroups $H,K\leq G$, write $c_{H,K}=c_{[H,1],[K,1]}$.
The untwisted specialization of
\eqref{eq: one sided composition matrix entries} is
$c_{H,K}=\sum_{g\in H\backslash G/K}k(H\cap{}^gK)$,
where $k(L)$ is the number of conjugacy classes of $L$.
We first show that
\begin{equation}
\label{eq: commuting pair mark formula}
 c_{H,K}
 =\frac1{|G|}
 \sum_{\substack{x,y\in G\\xy=yx}}
 |(G/H)^{\langle x,y\rangle}|\,
 |(G/K)^{\langle x,y\rangle}|.
\end{equation}
The product of the two fixed-point numbers counts fixed points on
$G/H\times G/K$.  For a transitive orbit $G/L$, write $G_z$ for the
stabilizer of $z\in G/L$; thus $G_{gL}=gLg^{-1}$.  Its contribution is
\[
\begin{aligned}
 \frac1{|G|}
 \sum_{\substack{x,y\in G\\xy=yx}}
 |(G/L)^{\langle x,y\rangle}|
 &=\frac1{|G|}
   \sum_{z\in G/L}
   \bigl|\{(x,y)\in G_z^2:xy=yx\}\bigr|\\
 &=\frac{|G:L|}{|G|}
   \bigl|\{(x,y)\in L^2:xy=yx\}\bigr|
 =k(L).
\end{aligned}
\]
The diagonal $G$-orbits of $G/H\times G/K$ have stabilizers
$H\cap{}^gK$, indexed by $H\backslash G/K$.  Summing their
contributions gives \eqref{eq: commuting pair mark formula}.

Group the sum by the conjugacy class of $A=\langle x,y\rangle$.
For $A\in\mathcal A_2(G)$ there are $[G:N_G(A)]$ conjugates of $A$,
each with the same number of ordered generating pairs.  Since the
fixed-point numbers are conjugacy-invariant, their total contribution is
\[
 \frac{[G:N_G(A)]}{|G|}
 \bigl|\{(x,y)\in A^2:\langle x,y\rangle=A\}\bigr|\,
 m_A(H)m_A(K)
 =d_A\,m_A(H)m_A(K).
\]
Summing over $A\in\mathcal A_2(G)$ gives
\eqref{eq: untwisted mark factorization}.  Every $d_A$ is positive.

To see that $M$ has full row rank, restrict its columns to
$H\in\mathcal A_2(G)$ and order both rows and columns by nondecreasing
subgroup order.  One has $m_A(H)\neq0$ precisely when
$g^{-1}Ag\leq H$ for some $g\in G$.  This requires $|A|\leq|H|$;
if their orders are equal, it requires $A$ and $H$ to be conjugate,
hence the same chosen representative.  The resulting square matrix is
therefore upper triangular, with diagonal entries
$m_A(A)=|N_G(A):A|>0$.  Hence $M$ has full row rank, and the positivity
of $D$ gives \eqref{eq: untwisted sandwich rank}.

Since $\Omega_G^0$ is indexed by all conjugacy classes of subgroups,
$C_G^0$ is nonsingular exactly when every subgroup of $G$ is abelian
and generated by at most two elements.  Necessity follows by taking
$G$ itself.  Conversely, every subgroup of a finite abelian group
generated by at most two elements is again generated by at most two
elements.
\end{proof}

\begin{proposition}
\label{prop: elementary abelian Lagrangian relation}
Let $p$ be a prime and $E=C_p\times C_p$.  Order $\Omega_E$ by the two
families
\begin{align*}
 \Omega_E^+
 &:=\{[1,1]\}\cup
 \{[E,\alpha]\mid[\alpha]\in H^2(E,k^\times)\},\\
 \Omega_E^-
 &:=\{[L,1]\mid L\leq E,\ |L|=p\}.
\end{align*}
Each family has $p+1$ elements.  If $J_{p+1}$ is the all-ones matrix of
size $p+1$, then
\begin{equation}
\label{eq: elementary abelian sandwich matrix}
 C_E=
 \begin{pmatrix}
  (p^2-1)I_{p+1}+J_{p+1}&pJ_{p+1}\\
  pJ_{p+1}&(p^2-1)I_{p+1}+J_{p+1}
 \end{pmatrix}.
\end{equation}
The matrix $C_E$ has rank $2p+1$, hence is singular of corank one.
The associated Lagrangian algebras satisfy
\begin{equation}
\label{eq: elementary abelian Lagrangian relation}
 [A_{[1,1]}]
 +\sum_{[\alpha]\in H^2(E,k^\times)}[A_{[E,\alpha]}]
 =
 \sum_{\substack{L\leq E\\|L|=p}}[A_{[L,1]}]
\end{equation}
in $K_0(\Z(\Vec_E))$.
\end{proposition}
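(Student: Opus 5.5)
The plan is to compute every entry of $C_E$ directly from formula \eqref{eq: one sided composition matrix entries}. Since $E$ is abelian, each double coset $H\backslash E/K$ is a coset of $HK$, there are $[E:HK]$ of them, and conjugation by a representative changes nothing. So $c_{b,c}=[E:HK]\cdot|\Irr(k_\theta[H\cap K])|$ with $[\theta]=[\alpha^{-1}|\,\beta|]$. The number of simple modules of $k_\theta[C]$ for abelian $C$ equals $|\Rad(\Alt_\theta)|$, by Lemma~\ref{lem: twisted overlap simple modules}. We use the standard facts $H^2(E,k^\times)\cong C_p$ and $H^2(C_p,k^\times)=H^2(1,k^\times)=1$; in particular $\Omega_E$ has $1+p+(p+1)$ elements, so each family has $p+1$ elements.

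The blocks then work out as follows.
\begin{itemize}
\item Within $\Omega_E^-$, for lines $L,L'$: if $L=L'$ the entry is $p\cdot p=p^2$; if $L\neq L'$ then $LL'=E$ and $L\cap L'=1$, giving $1$. This is $(p^2-1)I+J$.
\item Within $\Omega_E^+$: $c_{[1,1],[1,1]}=p^2$, and $c_{[1,1],[E,\alpha]}=1$. For $c_{[E,\alpha],[E,\alpha']}$ there is one coset and the count is $|\Irr(k_{\alpha^{-1}\alpha'}[E])|$. This is $p^2$ if $[\alpha]=[\alpha']$ and $1$ otherwise, since a nontrivial class on $E=C_p^2$ has nondegenerate alternating form. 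Hence again $(p^2-1)I+J$.
\item Between the families: $c_{[1,1],[L,1]}=[E:L]=p$, and $c_{[E,\alpha],[L,1]}=|\Irr(k_{\alpha|_L}[L])|=|L|=p$, because $\alpha|_L$ is trivial. This is $pJ$.
\end{itemize}

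For the rank, decompose $\mathbb C^{p+1}=\mathbb C\mathbf 1\oplus\mathbf 1^\perp$ in each block. On $\mathbf 1^\perp\oplus\mathbf 1^\perp$, $J$ acts by $0$, so $C_E$ acts as $(p^2-1)I$. This contributes rank $2p$.

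On the span of $(\mathbf 1,0)$ and $(0,\mathbf 1)$, $C_E$ acts by the $2\times2$ matrix
\[
\begin{pmatrix}p^2+p&p(p+1)\\p(p+1)&p^2+p\end{pmatrix},
\]
whose eigenvalues are $2p(p+1)$ and $0$. The kernel is spanned by $(\mathbf 1,-\mathbf 1)$. So the rank is $2p+1$ with corank one.

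The Lagrangian relation follows from \eqref{eq: sandwich kernel Lagrangian relations}. It says the kernel vector $(\mathbf 1,-\mathbf 1)$ gives $\sum_{\Omega^+}[A_a]-\sum_{\Omega^-}[A_a]=0$ in $K_0(\Z(\Vec_E))_{\mathbb C}$. Since $K_0$ is a free abelian group, this relation holds integrally.

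The main care point is the cohomology claim for $E$: that $H^2(E,k^\times)\cong C_p$ and that any nontrivial class has nondegenerate $\Alt$. This holds because $\Alt$ identifies $H^2$ of an abelian group with its alternating bicharacters, and for $C_p^2$ a nonzero alternating form is nondegenerate.
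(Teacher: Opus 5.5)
Your proposal is correct, but it takes a different route from the paper.

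\textbf{Your route.} You compute every entry of $C_E$ from the Clifford-derived formula \eqref{eq: one sided composition matrix entries}, counting simple modules of the twisted group algebras with Lemma~\ref{lem: twisted overlap simple modules}. You find the kernel by splitting each block into constant vectors and their complement. You then obtain \eqref{eq: elementary abelian Lagrangian relation} formally from the Gram-kernel identity \eqref{eq: sandwich kernel Lagrangian relations}, applied to the kernel vector $(\mathbf 1,-\mathbf 1)$.

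\textbf{The paper's route.} The paper works with Lagrangian subgroups of $\mathbb H(E)$, using the forward-referenced Proposition~\ref{prop: Omega Lagrangian correspondence}. The first family becomes $0\oplus\widehat E$ together with the graphs of $x\mapsto\Alt_\alpha(x,-)^{-1}$, and the second becomes the subgroups $L\oplus L^\perp$. The Gram entries are read off as orders of pairwise intersections. The relation is proved directly: every nonzero isotropic element lies in exactly one subgroup of each family. The eigenvalue computation then serves only to show that this relation is the unique one up to scalar.

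\textbf{What each buys.} Your argument stays entirely inside the bottom-ideal machinery of Section~\ref{sect: top and bottom ideals} and avoids the forward reference. The relation also comes for free from corank one, with no incidence counting. The paper's argument explains the relation concretely, as an identity between the indicator functions of the two families of Lagrangian subgroups. That is the form it lifts through $S^\perp/S$ in the proof of Theorem~\ref{thm: bottom cyclicity criterion}. Your $K_0$ relation is the same statement once each $A_a$ is identified with the multiplicity-free sum of the simple objects in its Lagrangian subgroup. So it serves that later application equally well, but it still needs the correspondence at that point.
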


\begin{proof}
Identify the Lagrangian algebras in $\Z(\Vec_E)$ with the Lagrangian
subgroups of $\mathbb H(E)=E\oplus\widehat E$, with
$q(x,\chi)=\chi(x)$.
See Proposition~\ref{prop: Omega Lagrangian correspondence} for this
correspondence.
The family $\Omega_E^+$ corresponds to
$0\oplus\widehat E$ and the graphs of
$x\mapsto\Alt_\alpha(x,-)^{-1}$; the family $\Omega_E^-$ corresponds
to $L\oplus L^\perp$ for lines $L\leq E$, where
$L^\perp=\{\chi\in\widehat E:\chi|_L=1\}$.

Two distinct subgroups in either family intersect only in zero.
For the graphs, the ratio of two distinct alternating bicharacters
on $E$ is nondegenerate; for the other family, distinct lines and
their annihilators have trivial intersections.  Subgroups in opposite
families meet in a line: on a graph this follows from
$\Alt_\alpha|_{L\times L}=1$, while
$(0\oplus\widehat E)\cap(L\oplus L^\perp)=0\oplus L^\perp$.
Each subgroup has $p^2$ elements.  The Gram entries are the orders of
these intersections, giving
\eqref{eq: elementary abelian sandwich matrix}.

Every nonzero isotropic element $(x,\chi)$ belongs to exactly one
subgroup in each family.  For $x\neq0$, the second has
$L=\langle x\rangle$, and evaluation at $x$ bijects the alternating
bicharacters on $E$ with the characters trivial on $\langle x\rangle$,
which determines the graph in the first family.  For $x=0$ and
$\chi\neq1$, the two subgroups are $0\oplus\widehat E$ and
$\ker\chi\oplus(\ker\chi)^\perp$.  The zero element belongs to all
$p+1$ subgroups in each family.  Hence the sums of the corresponding
Lagrangian algebra classes are equal, proving
\eqref{eq: elementary abelian Lagrangian relation}.

The eigenvalues of $C_E$ are $p^2-1$, $2p(p+1)$, and $0$, with
multiplicities $2p$, $1$, and $1$, respectively.  Thus its rank is
$2p+1$, and the displayed relation is the only one up to scalar.
\end{proof}

\begin{theorem}
\label{thm: bottom cyclicity criterion}
For a finite group $G$, the following are equivalent:
\begin{enumerate}
\item $G$ is cyclic;
\item the Gram matrix $C_G$ is nonsingular;
\item the complex bottom ideal
      $\mathcal J_G=\mathbb C\otimes_{\mathbb Z}J_G$ is semisimple.
\end{enumerate}
\end{theorem}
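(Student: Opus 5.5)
The equivalence (2)$\Leftrightarrow$(3) follows from Proposition~\ref{prop: radical sandwich algebra}. That result gives $\dim\operatorname{Jac}(\mathcal J_G)=n_G^2-r_G^2$. Hence $\mathcal J_G$ is semisimple exactly when $r_G=n_G$, that is, when $C_G$ is nonsingular. If $C_G$ is nonsingular, the proposition also identifies $\mathcal J_G$ with $\operatorname{Mat}_{n_G}(\mathbb C)$. So the real content is (1)$\Leftrightarrow$(2), and I would prove it through the Lagrangian Gram description: by \eqref{eq: sandwich kernel Lagrangian relations}, $C_G$ is nonsingular if and only if the classes $[A_a]$, $a\in\Omega_G$, are linearly independent in $K_0(\Z(\Vec_G))_{\mathbb C}$.

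For (2)$\Rightarrow$(1) I argue by contrapositive. Assume $G$ is not cyclic and aim to show $C_G$ is singular. If $G$ is nonabelian, or abelian but needs more than two generators, then Lemma~\ref{lem: untwisted mark factorization} shows that the principal submatrix $C_G^0$ is singular. Since $C_G=M^tM$ is a Gram matrix and hence positive semidefinite, a singular principal submatrix gives a nonzero vector $v$ supported on $\Omega_G^0$ with $v^tC_Gv=v^tC^0_Gv=0$. Positive semidefiniteness then gives $C_Gv=0$, so $C_G$ is singular. The remaining case is $G$ abelian, noncyclic, and two-generated. Here $G$ has a subquotient, in fact a subgroup, $E\cong C_p\times C_p$ for some prime $p$, and Proposition~\ref{prop: elementary abelian Lagrangian relation} supplies a relation among Lagrangian algebras of $\Z(\Vec_E)$.

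The main obstacle is transporting this relation from $E$ to $G$. The plan is to induce it along $E\leq G$. The composite $\DBurn(G,E)\circ\DBurn(E,1)$ with the induction biset $[\Delta(E)\cdot?,1]$, or concretely $a=[H,\alpha]\mapsto[H,\alpha]$ viewed in $G$, is a $\mathbb Z$-linear map $\iota:\mathbb Z\Omega_E\to\mathbb Z\Omega_G$. By Mackey/Frobenius for relative tensor products, it satisfies $C_G(\iota x,\iota y)=C_E(\operatorname{res}\iota x,y)$. I would avoid a full Mackey computation: for $G$ abelian, the formula \eqref{eq: one sided composition matrix entries} becomes $c_{[H,\alpha],[K,\beta]}=[G:HK]\cdot|\Irr(k_\theta[H\cap K])|$. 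This shows that for $H,K\leq E$ one has $c^G=[G:E]\,c^E$, as the double cosets split into $[G:E]$ copies of each $E$-coset and the twisted group algebra data are unchanged. Distinct decorated subgroups of $E$ stay distinct in $\Omega_G$ because $G$ is abelian. Hence the kernel vector $v$ of $C_E$ gives a nonzero $\iota v$, and $(\iota v)^tC_G(\iota v)=[G:E]\,v^tC_Ev=0$. Positive semidefiniteness then gives $C_G\iota v=0$.

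For (1)$\Rightarrow$(2), let $G=C_n$. Then $H^2(H,k^\times)=0$ for every subgroup, so $\Omega_G=\Omega^0_G$ and $C_G=C_G^0$. A cyclic group is abelian and generated by at most two elements, so Lemma~\ref{lem: untwisted mark factorization} shows that $C_G^0$ is nonsingular. This finishes the cycle of implications.
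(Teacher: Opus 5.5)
Your proof is correct. For (2)$\Leftrightarrow$(3), (1)$\Rightarrow$(2), and the nonabelian case it essentially matches the paper. Your ``singular principal submatrix of a positive semidefinite Gram matrix yields a kernel vector'' is the same observation as the paper's ``a nonsingular Gram matrix is positive definite.''

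The real difference is the abelian noncyclic case. The paper takes a \emph{quotient} $G\twoheadrightarrow E\cong C_p\times C_p$. It reduces $\mathbb H(G)$ by the isotropic subgroup $\ker(\pi)\oplus 0$, lifts the relation of Proposition~\ref{prop: elementary abelian Lagrangian relation} to a relation among Lagrangian algebra classes for $G$, and then applies \eqref{eq: sandwich kernel Lagrangian relations}. You instead take a \emph{subgroup} $E\leq G$ and read off from \eqref{eq: one sided composition matrix entries} that, for abelian $G$, $c_{b,c}=[G:HK]\,|\Irr(k_\theta[H\cap K])|$. Hence the principal submatrix of $C_G$ on the decorated subgroups of $E$ is $[G:E]\,C_E$, which is singular, and positive semidefiniteness finishes exactly as in the nonabelian case.

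Your route is more elementary. It needs neither the Lagrangian correspondence for $G$ nor isotropic reduction, and it handles both noncyclic cases by one principle. On the Lagrangian side your kernel vector corresponds to reduction by $0\oplus E^\perp$, which every $\Lambda(H,\alpha)$ with $H\leq E$ contains, so the two arguments are dual. The paper's version in return exhibits the kernel vector explicitly as a relation among Lagrangian algebras, which is the interpretation it wants.

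Two clean-ups. Drop the sentence invoking ``Mackey/Frobenius'' and the induction biset with a placeholder, since it is neither justified nor used. Also drop the restriction to two-generated $G$ in the last case: your subgroup argument works for every noncyclic abelian $G$.
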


\begin{proof}
The equivalence of \textup{(ii)} and \textup{(iii)} follows from
Proposition~\ref{prop: radical sandwich algebra}.  If $G$ is cyclic, then
every subgroup is cyclic and $H^2(H,k^\times)=0$ for every $H\leq G$.
Thus $\Omega_G=\Omega_G^0$, and
Lemma~\ref{lem: untwisted mark factorization} shows that $C_G=C_G^0$ is
nonsingular.  Hence \textup{(i)} implies \textup{(ii)}, and therefore
\textup{(iii)}.

Suppose that $G$ is nonabelian.  By
Lemma~\ref{lem: untwisted mark factorization}, the principal submatrix
$C_G^0$ is singular.  If $C_G$ were nonsingular, then, being a Gram
matrix, it would be positive definite.  But every principal submatrix
of a positive definite matrix is positive definite, a contradiction.

It remains to consider an abelian noncyclic group.  Choose a prime $p$ and
a quotient $\pi:G\twoheadrightarrow E=C_p\times C_p$, and put
$K=\ker(\pi)$.  Inside the hyperbolic metric group
$\mathbb H(G)=G\oplus\widehat G$, the subgroup $S=K\oplus0$ is isotropic
and $S^\perp/S\cong E\oplus\widehat E=\mathbb H(E)$.
Taking inverse images under $S^\perp\to S^\perp/S$ sends distinct
Lagrangian subspaces of $\mathbb H(E)$ to distinct Lagrangian
subgroups of $\mathbb H(G)$.  An element of $S^\perp$ lies in such
an inverse image exactly when its image lies in the original subgroup.
Thus \eqref{eq: elementary abelian Lagrangian relation} lifts to a
nonzero relation among the underlying classes of Lagrangian algebras
in $\Z(\Vec_G)$.
By the correspondence used above, each of these algebras is $A_a$ for
some $a\in\Omega_G$.  Hence
\eqref{eq: sandwich kernel Lagrangian relations} makes $C_G$ singular.
\end{proof}

The following two examples illustrate degeneracy caused by cocycle
decorations and degeneracy already present in the untwisted matrix.

\begin{example}
\label{ex: Klein four bottom degeneracy}
For $p=2$, Proposition~\ref{prop: elementary abelian Lagrangian relation}
gives the first example with singular $C_G$, namely $V=C_2\times C_2$.
In the ordering
$[1,1],\ [H_1,1],\ [H_2,1],\ [H_3,1],\ [V,1],\ [V,\beta]$,
where $H_1,H_2,H_3$ are the subgroups of order $2$ and $\beta$ is the
nontrivial class in $H^2(V,k^\times)$, the matrix $C_V$ is the one
computed in \cite[Example~4.20]{EtingofKinserWalton} in terms of the
ranks of the corresponding bimodule categories.
Its kernel is generated by $(1,-1,-1,-1,1,1)^t$.
Thus $r_V=5$ and
$\dim\operatorname{Jac}(\mathcal J_V)=6^2-5^2=11$.
By Lemma~\ref{lem: untwisted mark factorization}, $C_V^0$ is
nonsingular, whereas $C_V$ is singular.  This is the smallest example
in which a cocycle decoration is essential to the dependence.
\end{example}

\begin{example}
Let $G=S_3$.  Put
$H_2=\langle(12)\rangle$ and $H_3=\langle(123)\rangle$.
All Schur multipliers relevant to the one-sided basis are trivial, so order
that basis as $[1,1]$, $[H_2,1]$, $[H_3,1]$, $[S_3,1]$.
Then
\begin{equation}
\label{eq: S3 sandwich matrix}
 C_{S_3}=
 \begin{pmatrix}
  6&3&2&1\\
  3&3&1&2\\
  2&1&6&3\\
  1&2&3&3
 \end{pmatrix}.
\end{equation}
This matrix has rank $3$ and kernel generated by $(1,-2,-1,2)^t$.
Hence $\dim\operatorname{Jac}(\mathcal J_{S_3})=16-9=7$, and
$\mathbb C\otimes R_{S_3}$ is not semisimple.  Here no nontrivial
one-sided cocycle decoration is available: the singularity comes from the
nonabelian subgroup geometry alone.
\end{example}


\section{Factorization and Mackey structure}
\label{sect: factorization and Mackey}

Having determined the extremal structure, we turn to factorization
through smaller groups.  Standard two-sided subgroup induction satisfies
a Mackey decomposition whose double-coset summands factor through
subgroup intersections.

\subsection{Factorization through smaller groups}
\label{subsect: factorization middle}

For $G\neq1$, let $F_{<G}$ be the based span of all constituents of
composites that factor through a strictly smaller group:
\begin{equation}
\label{eq: smaller group factorization ideal}
 F_{<G}:=\sum_{|H|<|G|}
 \DBurn(G,H)\circ_H\DBurn(H,G)\subseteq R_G.
\end{equation}
The sum denotes the span of the distinguished basis constituents of these
composites.

For a subgroup $L\leq G$, let $F_G(L)$ denote the based span of the
indecomposable constituents of all composites
\[
 \mathcal X\boxtimes_{\Vec_L}\mathcal Y,
\]
where $\mathcal X$ is a $(\Vec_G,\Vec_L)$-bimodule category and
$\mathcal Y$ is a $(\Vec_L,\Vec_G)$-bimodule category.  Equivalently,
$F_G(L)$ is the based span of the constituents of
\[
 \DBurn(G,L)\circ_L\DBurn(L,G).
\]
A basis element lies in $F_G(L)$ precisely when it occurs as a
constituent of a composite through $L$.  If $L<G$,
then $F_G(L)\subseteq F_{<G}$.

\begin{proposition}
\label{prop: top bottom filtration}
For every nontrivial finite group $G$ there is a canonical chain of
two-sided based ideals
\begin{equation}
\label{eq: top bottom filtration}
 0\subset J_G\subseteq F_{<G}\subseteq I_G\subset R_G.
\end{equation}
Every basis element $[L,\mu]$ for which $p_1(L)<G$ or $p_2(L)<G$ belongs
to $F_{<G}$.  Consequently $I_G/F_{<G}$ is spanned by the images of
noninvertible subdirect pairs $(L,[\mu])$ with $p_1(L)=p_2(L)=G$.
\end{proposition}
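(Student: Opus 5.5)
We verify the four assertions in order: $F_{<G}$ is a based ideal; the middle inclusions $J_G\subseteq F_{<G}\subseteq I_G$; the outer inclusions are strict; and non-subdirect basis elements lie in $F_{<G}$.

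\emph{$F_{<G}$ is a two-sided based ideal.} For $x\in R_G$ and $u\in\DBurn(G,H)$, $v\in\DBurn(H,G)$ with $|H|<|G|$, associativity (Corollary~\ref{cor: decorated bisets and bimodule categories}) gives
\[
 x\,(u\circ_Hv)=(x\circ_Gu)\circ_Hv .
\]
Hence every constituent of $x$ times a constituent of $u\circ_Hv$ is again a constituent of a composite through $H$. The same holds on the right. Positivity of structure constants in the Clifford formula ensures that constituents of products of constituents are constituents of the full product. Thus $F_{<G}$ is closed under multiplication by basis elements on both sides, and it is based by definition.

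\emph{Middle inclusions.} Since $G\neq1$, the trivial group is strictly smaller. By definition $E_{a,b}=a\circ_1b^\vee$ lies in $\DBurn(G,1)\circ_1\DBurn(1,G)$, so $J_G\subseteq F_{<G}$. For $F_{<G}\subseteq I_G$, suppose an invertible basis element $[L,\mu]$ occurred in some $u\circ_Hv$ with $|H|<|G|$. Decompose $u$ and $v$ into transitive pieces; then it occurs in $[L_1,\mu_1]\circ_H[M_1,\nu_1]$ for transitive factors. Compose on the left with $[L,\mu]^{-1}$, which permutes basis elements. The identity $[\Delta(G),1]$ then occurs in a composite $x'\circ_Hy$ through $H$. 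Now run the counting from the proof of Proposition~\ref{prop: invertible constituent} with a middle group $H$ instead of $G$. The stabilizer $I$ conjugate to $\Delta(G)$ lies in $N\le G\times G$, and $N$ is the image of $F_1\le L'\times_H M$. The inequalities $|A|\le[N:I]\le|K|$ and their companions, together with Goursat, give $|G|\,|A| \le |N| \le |p_2(L')|\cdot|A|\cdot|K|/|K|$. More simply, $I\cong G$ embeds via $p_1$ into $p_1(L')$, and the identification of $I$ forces $G/A\cong p_2(L')/B$ with $|A|\ge|B|$, so $|G|\le|p_2(L')|\le|H|$. That is a contradiction. This cardinality comparison is the step I expect to need the most care: I must check that $p_1(L')=G$ and that $|G:A|=|p_2(L'):B|$ with $|B|\le|A|$, which follow from the same chain as before.

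\emph{Strictness of the outer inclusions.} $J_G\neq0$ trivially. $I_G\neq R_G$ because $\mathbf 1_G\notin I_G$.

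\emph{Non-subdirect basis elements.} Take $[L,\mu]$ with, say, $P:=p_1(L)<G$. I would factor it through $P$ as $[L,\mu]=[\Delta_{G,P},1]\circ_P[L,\mu]_{P}$. Here $\Delta_{G,P}=\{(p,p):p\in P\}\le G\times P$ is the induction biset, and $[L,\mu]_P$ is the same pair regarded in $P\times G$. In the Clifford formula for this composite, there is one double coset since $p_1(L)=P$, and $K_1=1$, so there is a single summand. That summand is $N_1=L$ with cocycle $\mu$, because $\Delta$ contributes the trivial cocycle. Hence $[L,\mu]\in F_G(P)\subseteq F_{<G}$. The case $p_2(L)<G$ is symmetric, using $[L,\mu]_P\circ_P[\Delta_{P,G},1]$.

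\emph{Final consequence.} $I_G$ is spanned by noninvertible basis elements, and by the previous paragraph those with a proper projection die in $I_G/F_{<G}$. Therefore $I_G/F_{<G}$ is spanned by the images of noninvertible subdirect pairs, as claimed.
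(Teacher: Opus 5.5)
\textbf{The gap: the decisive inequality for $F_{<G}\subseteq I_G$ is stated backwards.}

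Your plan follows the paper's proof closely:
\begin{itemize}
\item associativity plus positivity for the ideal property;
\item $E_{a,b}=a\circ_1b^\vee$ for $J_G\subseteq F_{<G}$;
\item the orbit--stabilizer count of Proposition~\ref{prop: invertible constituent} combined with Goursat for $F_{<G}\subseteq I_G$;
\item left composition with the undecorated inclusion biset $[\Delta(P),1]$ for non-subdirect pairs.
\end{itemize}
The ideal, strictness, and non-subdirect parts are correct. Your check that the last composite has one double coset, trivial $K_1$, and output exactly $(L,[\mu])$ is right.

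The problem is the inequality in the third step. Goursat for $L'$ gives $|G|/|A|=|p_2(L')|/|B|$, that is, $|G|=|p_2(L')|\,|A|/|B|$. If $|A|\ge|B|$, as you write and propose to verify at the end, this yields $|G|\ge|p_2(L')|$. That is no contradiction. In fact, Goursat alone already gives $|B|\le|A|\,|H|/|G|<|A|$ here, so the inequality $|B|\le|A|$ holds automatically and cannot rule anything out. Your displayed upper bound $|N|\le|p_2(L')|\,|A|$ (after cancelling $|K|$) is not justified by what precedes it. It fails for general fiber products: for $L'=G\times H$ and $M=H\times G$ one gets $N=G\times G$, so $|N|=|G|^2>|H|\,|G|$.

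\textbf{The fix.} You need the opposite inequality, which your own chain supplies if you carry it to the end. First move the relevant double-coset representative to $1$ by conjugating $(M,\nu)$. Then $A\times1\leq N$ and $(A\times1)\cap I=1$, so
\[
 |A|\leq[N:I]\leq|\Irr(k_\alpha[K])|\leq|K|\leq|B|,
\]
using $K=B\cap k_1(M)\leq B$. Combined with Goursat this gives
\[
 |G|=|p_2(L')|\,|A|/|B|\le|p_2(L')|\le|H|<|G|,
\]
which is the desired contradiction. Equivalently, as in the paper, Goursat gives $|B|<|A|$, contradicting $|A|\le|B|$. With this correction your argument is complete.
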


\begin{proof}
Associativity shows that the span of morphisms factoring through a fixed
group $H$ is stable under multiplication on the left and right by $R_G$;
taking all basis constituents and summing over $|H|<|G|$ gives a based
two-sided ideal.  The ideal $J_G$ consists of the factorizations through
$1$, so $J_G\subseteq F_{<G}$.

Suppose that $[L,\mu]\circ_H[M,\nu]$, with
$L\leq G\times H$, $M\leq H\times G$, and $|H|<|G|$, has an
invertible constituent.  Multiplying on the left by its inverse,
we may assume that this constituent is the identity.  Choose the
double-coset representative producing that summand and, after
conjugating $(M,\nu)$, take it to be $1$.
Put $A=k_1(L)$, $B=k_2(L)$, and write
$K=B\cap k_1(M)$, $N=L*M$, and $\alpha=\alpha_1$ for the
Clifford data.  Choose $\pi\in\Irr(k_\alpha[K])$ producing the
identity and let $I$ be its stabilizer in $N$.
Then $I$ is conjugate to $\Delta(G)$, so
$p_1(L)=p_2(M)=G$.  Moreover, $A\times1\leq N$ and
$(A\times1)\cap I=1$.  Orbit--stabilizer therefore gives
\[
 |A|\leq[N:I]\leq|\Irr(k_\alpha[K])|\leq|K|\leq|B|.
\]
On the other hand, Goursat's lemma gives
\[
 \frac{|G|}{|A|}
 =\frac{|p_2(L)|}{|B|}
 \leq\frac{|H|}{|B|},
\]
so $|B|<|A|$, a contradiction.  Thus
$F_{<G}\subseteq I_G$.

Finally, suppose $P=p_1(L)<G$.  Regard $(L,[\mu])$ first as a decorated
$(P,G)$-biset.  Composing it on the left with the undecorated inclusion
biset represented by $\Delta(P)\leq G\times P$ reproduces
$[L,\mu]$ and factors it through $P$.  The argument using
$Q=p_2(L)<G$ is symmetric.  Thus only subdirect pairs can remain in the
essential quotient.
\end{proof}

We call $I_G/J_G$ the \emph{middle quotient}.
The filtration \eqref{eq: top bottom filtration} separates it into the
subquotients $F_{<G}/J_G$ and $I_G/F_{<G}$.
The former is inherited from factorizations through smaller groups;
the latter is the essential quotient.
In a Clifford product, passing from the image $N_g$ to the stabilizer
$N_{g,\pi}$ can shrink the coordinate projections.

\begin{remark}
\label{rem: middle quotients}
The structure of $I_G/F_{<G}$ and of $I_G/J_G$ is open in general.
Subsection~\ref{subsect: cyclic structural example} describes the
filtration by factorization through subgroups for cyclic $p$-groups
and proves semisimplicity of the complexified ring for cyclic groups.
\end{remark}

\subsection{Standard subgroup induction and Mackey decomposition}
\label{subsect: standard subgroup induction Mackey}

For $H\leq G$ and a finite semisimple $\Vec_H$-bimodule category
$\mathcal M$, define its
\emph{standard two-sided subgroup induction} by
\begin{equation}
\label{eq: standard two-sided subgroup induction}
 \operatorname{Ind}_H^G(\mathcal M)
 :=\Vec_G\boxtimes_{\Vec_H}\mathcal M
          \boxtimes_{\Vec_H}\Vec_G,
\end{equation}
where the two copies of $\Vec_G$ have their
$(\Vec_G,\Vec_H)$- and $(\Vec_H,\Vec_G)$-bimodule structures
induced by the inclusion $H\leq G$.
On decorated-biset classes we use the same notation:
\[
 \operatorname{Ind}_H^G(x)
 :=[\Delta(H),1]_{G,H}\circ_H x\circ_H
   [\Delta(H),1]_{H,G},
 \qquad x\in R_H.
\]
For $H,K\leq G$ and $D\in H\backslash G/K$, let $\Vec_D$ be the
full subcategory of $\Vec_G$ whose simple objects are indexed by $D$,
with its $(\Vec_H,\Vec_K)$-bimodule structure inherited from
multiplication in $G$.

The same double-coset decomposition underlies the classical Mackey
formula.  The $(H,K)$-biset $G$ is the disjoint union of the $HgK$, and
each such biset factors through $g^{-1}Hg\cap K$, with its embedding into
$H$ given by conjugation by $g$.  Replacing these bisets by the
corresponding bimodule categories gives the following decomposition of
products of standard inductions.

\begin{theorem}
\label{thm: standard subgroup Mackey}
Let $H,K\leq G$, and let $\mathcal M$ and $\mathcal N$ be finite
semisimple $\Vec_H$- and $\Vec_K$-bimodule categories, respectively.
There is an equivalence of $\Vec_G$-bimodule categories
\begin{equation}
\label{eq: Mackey decomposition}
 \operatorname{Ind}_H^G(\mathcal M)
 \boxtimes_{\Vec_G}\operatorname{Ind}_K^G(\mathcal N)
 \simeq\bigoplus_{D\in H\backslash G/K}
   \Vec_G\boxtimes_{\Vec_H}\mathcal M
   \boxtimes_{\Vec_H}\Vec_D
   \boxtimes_{\Vec_K}\mathcal N
   \boxtimes_{\Vec_K}\Vec_G.
\end{equation}
For $g\in D$, the $D$-summand factors through
$\Vec_{g^{-1}Hg\cap K}$.  Consequently,
\[
 [\operatorname{Ind}_H^G(\mathcal M)]
 [\operatorname{Ind}_K^G(\mathcal N)]
 \in\sum_{D=HgK\in H\backslash G/K}F_G(g^{-1}Hg\cap K).
\]
\end{theorem}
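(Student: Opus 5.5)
The plan is to reduce the statement to the middle factor $\Vec_H\boxtimes_{\Vec_H}\Vec_G\boxtimes_{\Vec_G}\dots$. By associativity of relative tensor product \cite[Proposition~4.4]{Gr} and the unit property $\Vec_G\boxtimes_{\Vec_G}\mathcal P\simeq\mathcal P$ \cite[Proposition~3.15]{Gr}, the left-hand side of \eqref{eq: Mackey decomposition} becomes
\[
 \Vec_G\boxtimes_{\Vec_H}\mathcal M\boxtimes_{\Vec_H}
 \bigl(\Vec_G\boxtimes_{\Vec_G}\Vec_G\bigr)
 \boxtimes_{\Vec_K}\mathcal N\boxtimes_{\Vec_K}\Vec_G
 \simeq
 \Vec_G\boxtimes_{\Vec_H}\mathcal M\boxtimes_{\Vec_H}
 \Vec_G
 \boxtimes_{\Vec_K}\mathcal N\boxtimes_{\Vec_K}\Vec_G .
\]
Here the middle $\Vec_G$ is the $(\Vec_H,\Vec_K)$-bimodule obtained by restriction. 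As a $(\Vec_H,\Vec_K)$-bimodule category, $\Vec_G$ splits as $\bigoplus_{D\in H\backslash G/K}\Vec_D$. The simple objects form the biset $G$, and the double cosets are exactly its $H\times K$-orbits. The associativity and action constraints are trivial, so they respect this splitting. Relative tensor product is additive in each variable, so distributing over the direct sum gives \eqref{eq: Mackey decomposition}.

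For the factorization claim, I will identify $\Vec_D$ in decorated-biset terms. With $x=g$ as base point, its stabilizer in $H\times K$ under $(h,k)\cdot x=hxk^{-1}$ is
\[
 \{(h,k)\mid hg=gk\}=\{(gkg^{-1},k)\mid k\in g^{-1}Hg\cap K\}.
\]
The decoration is trivial. So $\Vec_D\simeq\M_{H,K}(L_g,1)$ with $L_g$ this twisted diagonal.

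Put $S=g^{-1}Hg\cap K$. I will show that $[L_g,1]=[\Delta_g,1]_{H,S}\circ_S[\Delta(S),1]_{S,K}$, where $\Delta_g=\{(gsg^{-1},s)\}\leq H\times S$. In this composite the coordinate kernels are trivial. Hence $K_g=1$, there is one simple module, and there is no Clifford correction. The double cosets $p_2(\Delta_g)\backslash S/p_1(\Delta(S))=S\backslash S/S$ consist of one element. The image $N_1$ is exactly $L_g$, with trivial cocycle. Theorem~\ref{thm: categorical Clifford composition} then gives $\Vec_D\simeq\Vec_{HgK}\simeq\mathcal X_g\boxtimes_{\Vec_S}\mathcal Y_g$ for suitable bimodule categories over $\Vec_S$. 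Equivalently, this is the categorical form of the classical fact that the biset $HgK$ is $H\times_{S}K$ after twisting by $c_g$.

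Substituting this into the $D$-summand and regrouping by associativity writes it as
\[
 \bigl(\Vec_G\boxtimes_{\Vec_H}\mathcal M\boxtimes_{\Vec_H}\mathcal X_g\bigr)
 \boxtimes_{\Vec_S}
 \bigl(\mathcal Y_g\boxtimes_{\Vec_K}\mathcal N\boxtimes_{\Vec_K}\Vec_G\bigr).
\]
The first factor is a $(\Vec_G,\Vec_S)$-bimodule and the second an $(\Vec_S,\Vec_G)$-bimodule. By definition of $F_G(S)$, all indecomposable constituents of the summand lie in $F_G(g^{-1}Hg\cap K)$. Taking classes and using Corollary~\ref{cor: decorated bisets and bimodule categories} gives the final containment.

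The main obstacle is the first step. One must check that the splitting of $\Vec_G$ by double cosets is a decomposition of bimodule categories compatible with the balancing used to define $\boxtimes_{\Vec_H}$ and $\boxtimes_{\Vec_K}$, and not merely of underlying categories. I would handle this by noting that relative tensor product is a $2$-functor, hence preserves direct sums of bimodule categories in each slot. This avoids inspecting the balancing data directly. The Clifford computation for $[L_g,1]$ is routine.
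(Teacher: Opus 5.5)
Your proposal is correct and follows essentially the same route as the paper. Both arguments use associativity (with $\Vec_G\boxtimes_{\Vec_G}\Vec_G\simeq\Vec_G$), split the middle $\Vec_G$ as $\bigoplus_D\Vec_D$, distribute, and factor $\Vec_D\simeq\Vec_H\boxtimes_{\Vec_S}\Vec_K$ with $S=g^{-1}Hg\cap K$. The only difference is that the paper obtains this last factorization from the biset isomorphism $H\times_S K\cong HgK$ with free balancing action, while you get it from the Clifford formula for $[\Delta_g,1]\circ_S[\Delta(S),1]$, where the trivial middle kernel leaves no Clifford correction.
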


\begin{proof}
Associativity of relative tensor product identifies the left-hand side
of \eqref{eq: Mackey decomposition} with
\[
 \Vec_G\boxtimes_{\Vec_H}\mathcal M
 \boxtimes_{\Vec_H}\Vec_G
 \boxtimes_{\Vec_K}\mathcal N
 \boxtimes_{\Vec_K}\Vec_G.
\]
As a $(\Vec_H,\Vec_K)$-bimodule category, the middle copy of
$\Vec_G$ decomposes as
\begin{equation}
\label{eq: Mackey double coset decomposition}
 \Vec_G\simeq\bigoplus_{D\in H\backslash G/K}\Vec_D.
\end{equation}
Distributing relative tensor product over this finite direct sum gives
\eqref{eq: Mackey decomposition}.

Fix $g\in D$ and put $I_g=g^{-1}Hg\cap K$.  Give $H$ the right
$I_g$-action $u\cdot t=ugtg^{-1}$, and give $K$ the left
$I_g$-action by multiplication.  The map
\[
 H\times_{I_g}K\xrightarrow{\ \sim\ }HgK,
 \qquad [u,v]\longmapsto ugv,
\]
is an $(H,K)$-biset isomorphism: the balancing relation
$(ugtg^{-1},v)\sim(u,tv)$ is respected because
$(ugtg^{-1})gv=ugtv$, and the map is surjective between sets
of order $|H||K|/|I_g|$.
The balancing action of $I_g$ on $H\times K$ is free.  With the
trivial decorations, the biset isomorphism therefore gives
\begin{equation}
\label{eq: Mackey graph factorization}
 \Vec_D\simeq\Vec_H\boxtimes_{\Vec_{I_g}}\Vec_K,
\end{equation}
where the right $\Vec_{I_g}$-action on $\Vec_H$ is induced by
$t\mapsto gtg^{-1}$.  Substituting this factorization into the
$D$-summand proves that it factors through $\Vec_{I_g}$, and hence
that its class belongs to $F_G(I_g)$.  The summands themselves are
independent of the representative, since they are defined from
$\Vec_D$.
\end{proof}

The conclusion concerns factorization through $g^{-1}Hg\cap K$; an individual
summand need not be a standard two-sided induction from a
$\Vec_{g^{-1}Hg\cap K}$-bimodule category.

Under the identification of
Corollary~\ref{cor: decorated bisets and bimodule categories},
\eqref{eq: Mackey decomposition} gives the corresponding identity
in $R_G$, which extends to arbitrary $x\in R_H$ and $y\in R_K$
by bilinearity.  The Clifford formula of
Theorem~\ref{thm: categorical Clifford composition} expands each
double-coset summand into indecomposable classes.

\begin{remark}
Let
\[
 d_H:=\operatorname{Ind}_H^G(\mathbf1_H)=[\Delta(H),1].
\]
Then the Mackey formula specializes to
\[
 d_Hd_K
 =
 \sum_{HgK\in H\backslash G/K}d_{g^{-1}Hg\cap K}.
\]
For $H=K=1$, this gives $d_1^2=|G|d_1$.
\end{remark}

\subsection{Cyclic groups}
\label{subsect: cyclic structural example}

For $G=C_p$, the noninvertible basis elements are precisely the four
undecorated classes $[H\times K,1]$, with $H,K\in\{1,G\}$, so
$I_G=J_G$.  Indeed, cyclic subgroups have trivial second cohomology, the
noncoordinate lines are graphs of automorphisms, and every nontrivial
decoration on $G\times G$ has perfect mixed commutator; the remaining
classes are therefore invertible by
Theorem~\ref{thm: invertibility criterion}.  We now determine the
complexified ring for arbitrary cyclic groups.  For
$N=\prod_{p\mid N}p^{n_p}$, external product gives
$R_{C_N}\cong\bigotimes_{p\mid N}R_{C_{p^{n_p}}}$ as based rings.  Indeed,
every subgroup of $C_N\times C_N$ and its second cohomology class decompose
uniquely over the primes, and the Clifford formula computes products
componentwise.  It therefore suffices to consider cyclic groups of
prime-power order.  We identify the factorization ideals by a numerical
height and prove semisimplicity by computing the successive quotients.
Fix a prime $p$ and write
$G_j=\mathbb Z/p^j\mathbb Z$, with $G_0=1$ as a group, using additive
coordinates.  For $j\leq m$, identify $G_j$ with the standard subgroup
of $G_m$ by $x\mapsto p^{m-j}x$; standard quotient maps are reduction
maps.  For $[L,\mu]\in\DBurn(G_m,G_n)$, write
$p_1(L)=G_a$ and $p_2(L)=G_b$.  By Goursat's lemma, let $G_s$ be the
common quotient
$G_a/k_1(L)\cong G_b/k_2(L)\cong G_s$.  If
$\operatorname{ord}[\mu]=p^t$, with $t=0$ for the trivial class, define
the \emph{factorization height} of $[L,\mu]$ by
\begin{equation}
\label{eq: cyclic factorization height}
 \operatorname{ht}(L,\mu):=s+t.
\end{equation}

\begin{proposition}
\label{prop: cyclic factorization height}
Let $[L,\mu]\in\DBurn(G_m,G_n)$ be as above.
\begin{enumerate}
\item[\textup{(i)}] One has $s+t\leq\min(a,b)$, and $[L,\mu]$
factors through $G_{s+t}$.

\item[\textup{(ii)}] If the monomial expression supplied by
Corollary~\ref{cor: abelian Clifford formula} is
$[L,\mu]\circ_{G_n}[M,\nu]=c[N,\lambda]$ with $c>0$, then
$\operatorname{ht}(N,\lambda)\leq
\min\{\operatorname{ht}(L,\mu),\operatorname{ht}(M,\nu)\}$.
\end{enumerate}
\end{proposition}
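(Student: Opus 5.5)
The plan is to make everything explicit through Goursat's lemma for cyclic $p$-groups, and to read the order of a class in $H^2$ off its alternating bicharacter. Write $L\leq G_a\times G_b$ with $k_1(L)=G_{a-s}$, $k_2(L)=G_{b-s}$ and $L/(k_1(L)\times k_2(L))\cong G_s$. Since every subgroup is abelian, $[\mu]\mapsto\Alt_\mu$ identifies $H^2(L,k^\times)$ with the alternating bicharacters on $L$.

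\emph{The inequality in (i).} We have $|L|=p^{a+b-s}$ and $L$ has rank at most two. Both projections of $L$ are surjective onto $G_a$ and $G_b$, and $L$ sits inside $G_a\times G_b$, so the exponent of $L$ is $p^{\max(a,b)}$. Hence $L\cong C_{p^{\max(a,b)}}\times C_{p^{\min(a,b)-s}}$. Its Schur multiplier is therefore cyclic of order $p^{\min(a,b)-s}$, which gives $t\leq\min(a,b)-s$.

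\emph{The factorization in (i).} I would build $[L,\mu]=[L_1,\mu_1]\circ_{G_{s+t}}[L_2,\mu_2]$ and check it with Corollary~\ref{cor: abelian Clifford formula}.
\begin{itemize}
\item Take $L_1=G_a\times_{G_s}G_{s+t}$ with trivial cocycle. Its coordinate kernels are $G_{a-s}$ and $G_t$.
\item Take $L_2\leq G_{s+t}\times G_b$ to be the analogous fibre product, decorated by a class $\mu_2$ whose mixed commutator pairs the kernel $G_t$ of the middle coordinate against a suitable subgroup of order $p^t$ of $k_2$.
\end{itemize}
In the composite there is one coset, and the overlap group is $K_1=G_t$. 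The stabilizer cocycle in Lemma~\ref{lem: twisted overlap simple modules} makes this kernel act through the characters of $G_t$. The Clifford stabilizer then cuts $N_1=L_1*L_2$ down to a subgroup of the right index, in the same way as in the proof of Theorem~\ref{thm: invertibility criterion}. One then arranges that this subgroup equals $L$ and that the inherited stabilizer cocycle has $\Alt$ equal to $\Alt_\mu$. This Clifford bookkeeping is the step I expect to be the main obstacle: matching both the output subgroup and the output cocycle exactly.

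If the direct construction is awkward, I would fall back on a two-step factorization. First factor $[L,1]$ through $G_s$ by composing restriction/inflation and deflation/induction bisets. Then show that the decoration $\mu$ can be pushed onto a middle group of order $p^{s+t}$, using that $\Alt_\mu$ factors through $L/\Rad(\Alt_\mu)\cong C_{p^t}^2$.

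\emph{Part (ii).} By Corollary~\ref{cor: abelian Clifford formula}, $N=N_{1,\pi}\leq N_1=L*M$.
\begin{itemize}
\item \emph{Goursat quotient.} The Goursat quotient of $N_1$ is a quotient of the common section of the quotients $G_{s_L}$ and $G_{s_M}$. Passing to the subgroup $N$ can only lower $s$, up to compensating growth of the coordinate kernels.
\item \emph{Cocycle order.} The alternating form of $\lambda$ on $N$ is computed from $\eta_1=\operatorname{pr}_L^*\mu\,\operatorname{pr}_M^*\nu$ on the preimage of $N$ in $F_1$. By the commutator identity \eqref{eqn: mixed commutator Clifford action}, stabilizing $[\pi]$ means that this preimage commutes with $K_1$ modulo $\Alt$. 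So $\Alt_\lambda$ is induced from $\Alt_\mu\cdot\Alt_\nu$ restricted to a subgroup. Its order therefore drops by at least as much as $s$ grows.
\end{itemize}
The cleanest way to organize this is to prove the single inequality $s_N+t_N\leq s_L+t_L$. For this I would write $N$ in Goursat form and apply the structural bound from the first step, $t_N\leq\min(a_N,b_N)-s_N$, together with the estimate that $\min(a_N,b_N)$ is controlled by the part of $L$ surviving in $N$. The inequality with $M$ in place of $L$ then follows by applying the duality $(-)^\vee$, which reverses composition and preserves height.
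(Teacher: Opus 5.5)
\textbf{There is a genuine gap in both parts; only the bound $t\leq\min(a,b)-s$ is established.} That bound is correct and matches the paper: $L\cong C_{p^{\max(a,b)}}\times C_{p^{\min(a,b)-s}}$ has cyclic Schur multiplier of order $p^{\min(a,b)-s}$.

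\textbf{The factorization in (i).} Your primary construction cannot work once $t\geq1$.
\begin{enumerate}
\item If the first factor $[L_1,1]$ is undecorated, then $\eta_1=\operatorname{pr}_{L_2}^*\mu_2$. So for $x\in k_1(L_1)$ the lift $\bigl((x,0),(0,0)\bigr)\in F_1$ has trivial $\Alt_{\eta_1}$ against all of $F_1$.
\item Since $K_1$ is cyclic, its simple modules are one-dimensional, and $\Alt_\lambda$ descends from $\Alt_{\eta_1}$ on the preimage of the stabilizer. Hence $k_1(L_1)\times0$ lies in every output stabilizer and in the radical of every output alternating form.
\item Your $L_1$ has $|k_1(L_1)|=p^{a-s}=|k_1(L)|$. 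An output equal to $(L,\mu)$ would therefore force $k_1(L)\times0\subseteq\Rad(\Alt_\mu)$. This is false for $t\geq1$: for $s=0$, say, $L=G_a\times G_b$ and $\Alt_\mu\bigl((1,0),(0,1)\bigr)$ has order $p^t$.
\end{enumerate}
Already for $C_2$ and $[L,\mu]=\omega$, your $L_1$ is $q$, and the table of Example~\ref{ex: decorated double Burnside C2} gives $qy\in\{q,2q,\ell,2\ell\}$ for every basis element $y$.

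Your fallback does not fill the hole. Factoring $[L,1]$ through $G_s$ says nothing about $[L,\mu]$, and ``pushing the decoration onto a middle group'' is exactly what must be shown. The missing step is to descend the whole pair:
\begin{enumerate}
\item Put $h=s+t$ and $\overline L=(q_a\times q_b)(L)\leq G_h\times G_h$.
\item With $a\geq b$, $e=(1,u)$ and $f=(0,p^s)$, the kernel of $L\to\overline L$ is $\langle p^he,p^tf\rangle\subseteq\langle p^te,p^tf\rangle=\Rad(\Alt_\mu)$. So $\mu$ descends to some $\overline\mu$.
\item Then $[L,\mu]=Q_a\circ[\overline L,\overline\mu]\circ Q_b^\vee$. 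Here the decoration sits on the middle factor. The outer graphs are undecorated and have trivial coordinate kernel on the $G_h$ side, so each composite has one double coset and trivial overlap.
\end{enumerate}

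\textbf{Part (ii) is not proved.} The claim that the cocycle order ``drops by at least as much as $s$ grows'' is the desired inequality $s_N+t_N\leq s_L+t_L$ itself. Your proposed tool, $t_N\leq\min(a_N,b_N)-s_N$, cannot yield it, because $\min(a_N,b_N)$ is not bounded by $\operatorname{ht}(L,\mu)$. For example, $[G_n\times G_n,1]$ has height $0$, yet its product with $[\Delta(G_n),1]$ has $N=G_n\times G_n$.

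The missing idea is to feed the factorization of (i) back in:
\begin{enumerate}
\item Write $[L,\mu]=Q_a\circ[\overline L,\overline\mu]\circ Q_b^\vee$.
\item By Corollary~\ref{cor: abelian Clifford formula}, $[\overline L,\overline\mu]\circ Q_b^\vee\circ[M,\nu]=c'[Z,\eta]$ for a basis element $[Z,\eta]$ with $p_1(Z)\leq G_h$. The inequality of (i) then gives $\operatorname{ht}(Z,\eta)\leq h$ automatically.
\item Left composition with $Q_a$ preserves height. Both $p_1$ and $k_1$ are pulled back along $q_a$, so the Goursat quotient is unchanged. The alternating form is pulled back along a surjection, so its order is unchanged.
\end{enumerate}
With this in place, your duality step for the bound by $\operatorname{ht}(M,\nu)$ is fine as stated.
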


\begin{proof}
For \textup{(i)}, interchange the coordinates if necessary so that
$a\geq b$, and choose compatible generators.  For a unit $u$ modulo
$p^s$, lifted modulo $p^b$, put $e=(1,u)$ and $f=(0,p^s)$.  Then
$L=\{(x,y)\in G_a\times G_b:y\equiv ux\pmod{p^s}\}$ and
$L=\langle e\rangle\oplus\langle f\rangle\cong G_a\oplus G_{b-s}$.
Since $k^\times$ is divisible,
$H^2(L,k^\times)\cong\Hom(\wedge^2L,k^\times)\cong C_{p^{b-s}}$.
For $\beta=\Alt_\mu$, the class is determined by $\beta(e,f)$, whose
order is $p^t$.  Hence $t\leq b-s$, proving
$s+t\leq b=\min(a,b)$.

Put $h=s+t$, let $q_a:G_a\twoheadrightarrow G_h$ and
$q_b:G_b\twoheadrightarrow G_h$ be the standard quotient maps, and set
$\overline L=(q_a\times q_b)(L)$.  The kernel of
$L\to\overline L$ is $D=\langle p^he,p^tf\rangle$.  To show that
$D\subseteq\Rad(\beta)$, it suffices to pair its generators with $e$ and
$f$.  The pairings of multiples of $e$ with $e$, and of multiples of
$f$ with $f$, are trivial by alternation, while
\[
 \beta(p^he,f)=\beta(e,f)^{p^h}=1,
 \qquad
 \beta(p^tf,e)=\beta(f,e)^{p^t}=1.
\]
Here $\beta(e,f)$ has order $p^t$ and $h\geq t$.  Therefore
$D\subseteq\Rad(\beta)$, and $\beta$ descends to an alternating
bicharacter $\overline\beta$ on $\overline L$; for its class
$[\overline\mu]$, one has
$(q_a\times q_b)^*[\overline\mu]=[\mu]$.

Put
\[
 Q_a=\left[\{(u,q_a(u)):u\in G_a\},1\right]
     \in\DBurn(G_m,G_h),
 \qquad
 Q_b=\left[\{(v,q_b(v)):v\in G_b\},1\right]
     \in\DBurn(G_n,G_h).
\]
Here $G_a\leq G_m$ and $G_b\leq G_n$ are understood by the standard
subgroup convention above.  The exact factorization through $G_h$ is
\begin{equation}
\label{eq: cyclic exact factorization}
 [L,\mu]
 =
 Q_a\circ_{G_h}[\overline L,\overline\mu]
 \circ_{G_h}Q_b^\vee.
\end{equation}
Write $\Gamma(q_a)$ for the graph of $q_a$.
For the first composition, $p_2(\Gamma(q_a))=G_h$ because $q_a$ is
surjective, so Corollary~\ref{cor: abelian Clifford formula} has a single
double coset.  Also $k_2(\Gamma(q_a))=1$, so its intersection with
$k_1(\overline L)$ is trivial; the twisted group algebra on this
intersection is $k$, with one simple module and hence one Clifford orbit.
The composition therefore gives one term, with the pullback decoration,
appearing once.  On the right, the reversed graph of $q_b$ has full first
projection $G_h$ and trivial first-coordinate kernel, so the same
conclusions apply.  The resulting subgroup is the inverse image of
$\overline L$ under $q_a\times q_b$, namely $L$, and its decoration is
$[\mu]$ by the pullback identity above.  This proves
\eqref{eq: cyclic exact factorization} and hence factorization through
$G_h$.

For \textup{(ii)}, put $x=[L,\mu]\in\DBurn(G_m,G_n)$ and
$y=[M,\nu]\in\DBurn(G_n,G_\ell)$, and suppose
$x\circ_{G_n}y=c[N,\lambda]$ with $c>0$.  Set
$h=\operatorname{ht}(L,\mu)$ and use
\eqref{eq: cyclic exact factorization}.

We first record that these graph factors preserve height.  If
$[S,\sigma]\in\DBurn(G_h,G_d)$, then
$Q_a\circ_{G_h}[S,\sigma]=[S',\sigma']$, where
\[
 S'=\{(u,v)\in G_a\times G_d:(q_a(u),v)\in S\}
\]
is regarded as a subgroup of $G_m\times G_d$, and
$[\sigma']=(q_a\times\id)^*[\sigma]$.
One has $p_1(S')=q_a^{-1}(p_1(S))$ and
$k_1(S')=q_a^{-1}(k_1(S))$, and therefore
$p_1(S')/k_1(S')\cong p_1(S)/k_1(S)$.
Moreover, the map $S'\to S$, given by
$(u,v)\mapsto(q_a(u),v)$, is surjective and
$\Alt_{\sigma'}=(q_a\times\id)^*\Alt_\sigma$.
The two alternating bicharacters therefore have the same image in
$k^\times$, so their cohomology classes have the same order and
$\operatorname{ht}(S',\sigma')=\operatorname{ht}(S,\sigma)$.
The analogous right-hand conclusion for composition with $Q_b^\vee$
follows by duality.

Applying Corollary~\ref{cor: abelian Clifford formula} successively gives
\[
 [\overline L,\overline\mu]\circ_{G_h}Q_b^\vee
 \circ_{G_n}y=c'[Z,\eta],
 \qquad c'>0,
\]
where $[Z,\eta]\in\DBurn(G_h,G_\ell)$.  Part \textup{(i)} gives
$\operatorname{ht}(Z,\eta)\leq h$.  By associativity,
$x\circ_{G_n}y=c'Q_a\circ_{G_h}[Z,\eta]$, and height preservation for
$Q_a$ gives
$\operatorname{ht}(N,\lambda)\leq h=\operatorname{ht}(L,\mu)$.
Apply the same conclusion to the dual product
$(x\circ_{G_n}y)^\vee=y^\vee\circ_{G_n}x^\vee$.
Duality preserves height: interchanging coordinates and
inverting and transporting the cocycle changes neither the order of the
Goursat quotient nor the order of the decoration.  Therefore
$\operatorname{ht}(N,\lambda)\leq\operatorname{ht}(M,\nu)$, proving
\textup{(ii)}.
\end{proof}

\begin{corollary}
\label{cor: cyclic factorization filtration}
For $G_n=C_{p^n}$ and $0\leq r\leq n$,
\begin{equation}
\label{eq: cyclic factorization ideal by height}
 F_{G_n}(G_r)
 =
 \mathbb Z\text{-span}
 \{[L,\mu]\in R_{G_n}:\operatorname{ht}(L,\mu)\leq r\}.
\end{equation}
These ideals form the factorization filtration
\begin{equation}
\label{eq: cyclic factorization chain}
 J_{G_n}=F_{G_n}(G_0)
 \subseteq F_{G_n}(G_1)
 \subseteq\cdots\subseteq
 F_{G_n}(G_n)=R_{G_n}.
\end{equation}
Moreover, $\operatorname{ht}(L,\mu)$ is the least $r$ for which
$[L,\mu]$ factors through $G_r$.
\end{corollary}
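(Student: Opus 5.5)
We prove the two inclusions in \eqref{eq: cyclic factorization ideal by height} and then read off the remaining claims. For ``$\supseteq$'': let $[L,\mu]\in R_{G_n}$ have height $h\leq r$. Proposition~\ref{prop: cyclic factorization height}(i) factors $[L,\mu]$ exactly as a single composite through $G_h$, namely $Q_a\circ_{G_h}[\overline L,\overline\mu]\circ_{G_h}Q_b^\vee$. It remains to move from $G_h$ up to $G_r$. We do this by inserting the graph of the standard quotient $q:G_r\twoheadrightarrow G_h$. Using the height-preservation computation from the proof of part (ii), $Q_a$ equals $\Gamma(q_a')\circ_{G_r}\Gamma(q)$-type composites with a single term, where $q_a'=q_a$ factored through $G_r$. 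More simply, one writes
\[
 [L,\mu]=\bigl(Q_a'\circ_{G_r}[\Gamma(q),1]\bigr)\circ_{G_h}[\overline L,\overline\mu]\circ_{G_h}Q_b^\vee,
\]
with $Q_a'$ the graph of the surjection $G_a\to G_r$ when $a\geq r$. This exhibits $[L,\mu]$ as a single constituent of a composite through $G_r$. Here $a\geq h$, and if $a<r$ we instead use $G_h\leq G_r$ via the standard inclusion biset. In either case the argument is the same one-double-coset, trivial-kernel computation from Corollary~\ref{cor: abelian Clifford formula}. Since $F_{G_n}(G_r)$ is based, $[L,\mu]\in F_{G_n}(G_r)$.

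For ``$\subseteq$'': take $x\in\DBurn(G_n,G_r)$ and $y\in\DBurn(G_r,G_n)$ basis elements. By Proposition~\ref{prop: cyclic factorization height}(i), applied to groups $G_n,G_r$, we have $\operatorname{ht}(x)\leq\min(a,b)\leq r$. By Corollary~\ref{cor: abelian Clifford formula}, $x\circ_{G_r}y$ is monomial, $c[N,\lambda]$ with $c>0$. Part (ii) then gives $\operatorname{ht}(N,\lambda)\leq\operatorname{ht}(x)\leq r$. Extending bilinearly, every constituent of a composite through $G_r$ has height at most $r$.

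The remaining claims now follow directly. Monotonicity of the chain is immediate from the height description. The endpoints are as follows. At $r=n$ every basis element has height at most $n$ by part (i), so $F_{G_n}(G_n)=R_{G_n}$; this is also clear from the identity factorization. At $r=0$, $F_{G_n}(G_0)$ is by definition spanned by constituents of composites through the trivial group, which is $J_{G_n}$. The description of height as the least such $r$ is the conjunction of the two inclusions: $[L,\mu]$ lies in $F_{G_n}(G_r)$ iff $\operatorname{ht}\leq r$.

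The step I expect to be the main obstacle is the ``lifting'' from $G_h$ to $G_r$ in the first inclusion. I will first try to write the identity element $[\Delta(G_h),1]$ of $\DBurn(G_h,G_h)$ as $[\Gamma(q),1]^\vee$-type composite $\Gamma\circ_{G_r}\Gamma^\vee$. This fails: that composite is not the identity but a multiple. The robust fix is to compose $[\overline L,\overline\mu]$ on both sides with quotient graphs $G_r\to G_h$ through $G_r$. Via Corollary~\ref{cor: abelian Clifford formula} and positivity, the result is a positive multiple of the pulled-back basis element $[\widetilde L,\widetilde\mu]$ in $\DBurn(G_r,G_r)$. One then checks directly, via Goursat, that $[L,\mu]=Q''_a\circ[\widetilde L,\widetilde\mu]\circ Q''^\vee_b$ with single terms.
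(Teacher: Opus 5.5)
Your inclusion ``$\subseteq$'', the endpoint identifications and the minimality statement follow the paper. Your exact factorization for $a\geq r$ is also correct: $q_a$ factors as $G_a\twoheadrightarrow G_r\twoheadrightarrow G_h$, and the composite of the two graphs over $G_r$ has one double coset and trivial kernel. Hence $[L,\mu]=Q_a'\circ_{G_r}\bigl([\Gamma(q),1]\circ_{G_h}[\overline L,\overline\mu]\circ_{G_h}Q_b^\vee\bigr)$.

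The gap is ``$\supseteq$'' when the quotient-graph trick is unavailable on either side, i.e.\ $a,b<r$. An example is $[\Delta(G_1),1]\in R_{G_3}$ with $r=2$. Your fallback via ``the standard inclusion biset'' for $G_h\leq G_r$ is not a one-double-coset computation. Composing the restriction and inclusion bisets for $G_h\leq G_r$ over $G_r$ gives $|G_h\backslash G_r/G_h|=p^{r-h}$ double cosets. Your ``robust fix'' also fails in this case. Take $Q_a''$ to be a graph of a homomorphism $\psi:G_a\to G_r$ with $q\circ\psi=q_a$, as in your other factors. Then $\psi(G_a)\leq p^{r-a}G_r$, and its reduction modulo $p^h$ is a proper subgroup of $G_h$ once $h\geq1$. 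So no such $\psi$ exists, and the claimed Goursat check ``with single terms'' is unsupported.

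What is missing is that a positive multiple suffices. You already use the fact behind this when you say $F_{G_n}(G_r)$ is based: membership only requires $[L,\mu]$ to be a constituent of some composite through $G_r$. The composite you rejected as ``not the identity but a multiple'' is exactly the paper's argument. One has $[\Delta(G_h),1]_{G_h,G_r}\circ_{G_r}[\Delta(G_h),1]_{G_r,G_h}=p^{r-h}1_{G_h}$. Inserting this into the exact factorization through $G_h$ from Proposition~\ref{prop: cyclic factorization height}\textup{(i)} writes $p^{r-h}[L,\mu]$ as a composite through $G_r$. This works uniformly in $a,b,h$, with no case split.

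If you prefer exact factorizations, handle $a\leq r$ directly. Then $p_1(L)=G_a\leq G_r$, so you may regard $L\leq G_r\times G_n$ as a decorated $(G_r,G_n)$-biset and precompose with the inclusion biset $[\Delta(G_r),1]_{G_n,G_r}$. Its second projection is all of $G_r$ and its kernel is trivial, so the Clifford formula returns $[L,\mu]$ exactly, as in the proof of Proposition~\ref{prop: top bottom filtration}. Combined with your quotient-graph argument for $a\geq r$, this closes the gap.
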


\begin{proof}
If a basis element occurs in a composite through
$G_r$, Proposition~\ref{prop: cyclic factorization height}\textup{(i)}
bounds the heights of both factors by $r$, and part \textup{(ii)} bounds
the output by $r$.  Hence $F_{G_n}(G_r)$ is
contained in the right-hand side of
\eqref{eq: cyclic factorization ideal by height}.

Conversely, let $[L,\mu]$ have height $h\leq r$.  The factorization
\eqref{eq: cyclic exact factorization} is through $G_h$.  The undecorated
restriction--inclusion pair for $G_h\leq G_r$ satisfies
\[
 [\Delta(G_h),1]_{G_h,G_r}\circ_{G_r}
 [\Delta(G_h),1]_{G_r,G_h}=p^{r-h}1_{G_h}.
\]
Inserting this pair into \eqref{eq: cyclic exact factorization} expresses
a positive multiple of $[L,\mu]$ as a composite through $G_r$.  Since
$F_{G_n}(G_r)$ is the based span of the constituents of such composites,
$[L,\mu]\in F_{G_n}(G_r)$.  This proves the reverse inclusion.
No division in the integral ring is involved.

The height-span description in
\eqref{eq: cyclic factorization ideal by height} is increasing in $r$.
Its endpoints are $F_{G_n}(G_0)=F_{G_n}(1)=J_{G_n}$ and $R_{G_n}$, the
latter because all heights are at most $n$; this proves
\eqref{eq: cyclic factorization chain}.  If $[L,\mu]$ has height $h$,
\eqref{eq: cyclic exact factorization} supplies a factorization through
$G_h$, while any factorization through $G_r$ forces $h\leq r$ by the
height bound and monotonicity in
Proposition~\ref{prop: cyclic factorization height}\textup{(i),(ii)}.
Thus $h$ is the least possible index.
\end{proof}

Fix $n$ and write
$\mathcal F_r:=\mathbb C\otimes_{\mathbb Z}F_{G_n}(G_r)$ for
$0\leq r\leq n$.  For every $r\geq1$, put
\begin{equation}
\label{eq: cyclic layer algebra definition}
 E_r
 :=
 \mathbb C\otimes_{\mathbb Z}
 \left(R_{G_r}/F_{G_r}(G_{r-1})\right).
\end{equation}

\begin{proposition}
\label{prop: cyclic matrix layers}
For $1\leq r\leq n$, there is an algebra isomorphism
\begin{equation}
\label{eq: cyclic matrix layer}
 \mathcal F_r/\mathcal F_{r-1}
 \cong
 \operatorname{Mat}_{n-r+1}(E_r).
\end{equation}
\end{proposition}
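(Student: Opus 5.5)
The plan is to identify $\mathcal F_r/\mathcal F_{r-1}$ with a sandwich algebra over $E_r$ whose sandwich matrix is diagonal and invertible. By Corollary~\ref{cor: cyclic factorization filtration}, the quotient $\mathcal F_r/\mathcal F_{r-1}$ has as basis the images of the basis elements $[L,\mu]\in R_{G_n}$ of height exactly $r$. For such an element write $p_1(L)=G_a$ and $p_2(L)=G_b$. Proposition~\ref{prop: cyclic factorization height}\textup{(i)} gives $r\le\min(a,b)$, so $a,b\in\{r,\dots,n\}$, which is a set of $n-r+1$ indices.

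The first step is to parametrize this basis. The exact factorization \eqref{eq: cyclic exact factorization} writes
\[
 [L,\mu]=Q_a\circ_{G_r}z\circ_{G_r}Q_b^\vee,
 \qquad z=[\overline L,\overline\mu]\in R_{G_r}.
\]
By the height-preservation argument in the proof of Proposition~\ref{prop: cyclic factorization height}\textup{(ii)}, $z$ also has height $r$. Inside $G_r\times G_r$ this forces $p_1(\overline L)=p_2(\overline L)=G_r$, since otherwise $s+t\le\min$ of the projection exponents would be $<r$.

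Conversely, take any height-$r$ basis element $z$ of $R_{G_r}$ and any $a,b\in\{r,\dots,n\}$. The monomial computation in that same proof shows that $Q_a\circ z\circ Q_b^\vee$ is a single basis element. Its projections are $G_a$ and $G_b$, and it has height $r$. The triple $(a,b,z)$ is recovered from it: $a$ and $b$ come from the projections, and $z$ is the pushforward along $q_a\times q_b$ together with the descended class. Hence $(a,b,z)\mapsto Q_a\circ z\circ Q_b^\vee$ is a bijection onto the height-$r$ basis. The height-$r$ basis elements $z$ of $R_{G_r}$ are exactly a basis of $E_r$, by the same corollary applied to $G_r$. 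So we get a linear isomorphism
\[
 \Psi:\operatorname{Mat}_{n-r+1}(E_r)\to\mathcal F_r/\mathcal F_{r-1},
 \qquad e_{ab}\otimes z\mapsto Q_a\circ z\circ Q_b^\vee .
\]

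The second step, which I expect to be the main obstacle, is computing $Q_b^\vee\circ_{G_n}Q_c\in R_{G_r}$ modulo lower height. Using Corollary~\ref{cor: abelian Clifford formula}, there are $[G_n:G_b G_c]$ double cosets and the kernel intersection is trivial. The output subgroup is
\[
 \{(q_b(v),q_c(v)) : v\in G_b\cap G_c\}.
\]
If $b=c$ this is $\Delta(G_r)$, with coefficient $p^{n-b}$. If $b\ne c$, say $b<c$, then $q_c$ restricted to $G_b$ is not surjective onto $G_r$. The output then has a proper projection, hence height $<r$, and lies in $F_{G_r}(G_{r-1})$. Therefore
\[
 Q_b^\vee\circ Q_c\equiv\delta_{bc}\,p^{n-b}\,\mathbf 1_{G_r}
 \pmod{F_{G_r}(G_{r-1})}.
\]

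Combining this with associativity and with the fact that lower-height terms stay lower under composition (Proposition~\ref{prop: cyclic factorization height}\textup{(ii)}) gives
\[
 \Psi(e_{ab}\otimes z)\,\Psi(e_{cd}\otimes w)\equiv\delta_{bc}\,p^{n-b}\,\Psi(e_{ad}\otimes zw).
\]
Here $zw$ is the product taken in $E_r$. One care point is that an element of height $<r$ in $R_{G_r}$, after conjugation by the graph factors $Q_a$ and $Q_d^\vee$, lands in height $<r$ in $R_{G_n}$; this is again height preservation.

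The last step is rescaling. Since the sandwich matrix $\operatorname{diag}(p^{n-b})$ is invertible over $\mathbb C$, we replace $\Psi$ by
\[
 e_{ab}\otimes z\mapsto p^{-(n-b)}\,\Psi(e_{ab}\otimes z).
\]
This converts the sandwich product into ordinary matrix multiplication, giving the isomorphism \eqref{eq: cyclic matrix layer}.
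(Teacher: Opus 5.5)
Your argument is the paper's proof in different notation. Your $Q_a$ is the paper's $\ell_{a-r}$, and the normal form $Q_a\circ z\circ Q_b^\vee$, the congruence for $Q_b^\vee\circ Q_c$ modulo $F_{G_r}(G_{r-1})$, and the final diagonal rescaling are exactly the paper's steps.

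There is one slip in the middle composite. The intersection $k_2(Q_b^\vee)\cap k_1(Q_c)=\ker q_b\cap\ker q_c$ is not trivial; it has order $p^{\min(b,c)-r}$. For $b=c$ it contributes $p^{b-r}$ singleton Clifford orbits, so the coefficient is $p^{n-b}\cdot p^{b-r}=p^{n-r}$, not $p^{n-b}$. This does not affect your conclusion, because your rescaling works for any invertible diagonal sandwich matrix. With the correct value you can rescale uniformly by $p^{-(n-r)}$, as the paper does.
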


\begin{proof}
For $0\leq i\leq n-r$, let
$q_i:G_{r+i}\twoheadrightarrow G_r$ be the standard quotient with kernel
$G_i$, chosen compatibly so that
$q_k|_{G_{r+j}}=p^{k-j}q_j$ for $j\leq k$.  Define
\[
 \ell_i=\left[\{(x,q_i(x)):x\in G_{r+i}\},1\right]
 \in\DBurn(G_n,G_r),
\]
where $G_{r+i}$ is viewed as the standard subgroup of $G_n$.
Its dual $\ell_i^\vee\in\DBurn(G_r,G_n)$ is the reversed graph.
Every height-$r$ basis element satisfies
$p_1(L)=G_{r+i}$ and $p_2(L)=G_{r+j}$ for unique
$0\leq i,j\leq n-r$, and descends uniquely along $q_i\times q_j$ to a
height-$r$ basis element $x$ of $R_{G_r}$.  The decoration descends
because $G_i\times1$ and $1\times G_j$ lie in $L$ and in the radical of
$\Alt_\mu$.  Thus
\[
 [L,\mu]=\ell_i\circ_{G_r}x\circ_{G_r}\ell_j^\vee.
\]
Conversely, every such expression has height $r$.  The identities
$p_1(L)=G_{r+i}$ and $p_2(L)=G_{r+j}$ determine $i,j$, while the image
of $L$ under $q_i\times q_j$, equipped with the unique cohomology class
whose pullback is $[\mu]$, recovers $x$.

By Corollary~\ref{cor: abelian Clifford formula} and the definition of
$\ell_i$, for $0\leq j\leq k\leq n-r$ one has
\begin{equation}
\label{eq: cyclic quotient section product}
 \ell_j^\vee\circ_{G_n}\ell_k
 =p^{n-r-|j-k|}
 \left[\{(a,p^{k-j}a):a\in G_r\},1\right].
\end{equation}
For $k<j$, interchange the coordinates in the graph on the right.

For $j=k$, the output subgroup is $\Delta(G_r)$.  If $j\neq k$, the
undecorated output has height
$\max\{r-|j-k|,0\}\leq r-1$.  Consequently
\begin{equation}
\label{eq: cyclic quotient section diagonal}
 \ell_j^\vee\ell_k
 \equiv
 \delta_{jk}p^{n-r}1_{G_r}
 \pmod{F_{G_r}(G_{r-1})}.
\end{equation}

Let $e_{ij}$ be the standard matrix units, and write
$e_{ij}(\overline x)$ for the matrix having
$\overline x\in E_r$ in entry $(i,j)$ and zero elsewhere.  Define
\[
 \Phi:\operatorname{Mat}_{n-r+1}(E_r)
 \longrightarrow
 \mathcal F_r/\mathcal F_{r-1},
 \qquad
 \Phi\bigl(e_{ij}(\overline x)\bigr)
 =p^{-(n-r)}\,\overline{\ell_i x\ell_j^\vee}.
\]
Here $x\in\mathbb C\otimes_{\mathbb Z}R_{G_r}$ is any lift of
$\overline x\in E_r$, and composition is extended
complex-bilinearly.  Height monotonicity from
Proposition~\ref{prop: cyclic factorization height} makes the map
independent of the lift, since
Corollary~\ref{cor: cyclic factorization filtration} places lower-height
terms in $\mathcal F_{r-1}$.
By \eqref{eq: cyclic quotient section diagonal},
\[
 \begin{aligned}
 \Phi(e_{ij}(\overline x))\Phi(e_{kl}(\overline y))
 &=p^{-2(n-r)}
   \overline{\ell_i x\ell_j^\vee\ell_k y\ell_l^\vee}\\
 &=\delta_{jk}p^{-(n-r)}
   \overline{\ell_i(xy)\ell_l^\vee}\\
 &=\Phi\bigl(e_{ij}(\overline x)e_{kl}(\overline y)\bigr).
 \end{aligned}
\]
The normal form above shows that these images form a basis.  Thus
$\Phi$ is a linear bijection and hence the isomorphism in
\eqref{eq: cyclic matrix layer}.
\end{proof}

For $r=0$, the bottom must be treated separately.  Here
$\mathcal F_0=\mathbb C\otimes_{\mathbb Z}J_{G_n}$.  In the order
$G_0,G_1,\ldots,G_n$, its Gram matrix $C_{G_n}$ is
$\bigl(p^{n-|i-j|}\bigr)_{0\leq i,j\leq n}$.  It is nonsingular by
Theorem~\ref{thm: bottom cyclicity criterion}, and hence
\begin{equation}
\label{eq: cyclic bottom matrix algebra}
 \mathcal F_0
 \cong
 \operatorname{Mat}_{n+1}(\mathbb C).
\end{equation}

\begin{proposition}
\label{prop: cyclic positive layers semisimple}
For every $r\geq1$, the algebra $E_r$ is semisimple over $\mathbb C$.
\end{proposition}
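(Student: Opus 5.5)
The approach is to show that $E_r$ is a twisted algebra of a finite inverse semigroup, up to harmless positive scalars. Such algebras over $\mathbb C$ are semisimple; this is the mechanism behind Boltje--Danz's treatment of the ordinary double Burnside algebra of a cyclic group.

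First I will describe the basis of $E_r$ explicitly. By Corollary~\ref{cor: cyclic factorization filtration}, $E_r$ has basis the images of the basis elements $[L,\mu]\in R_{G_r}$ of height exactly $r$. Proposition~\ref{prop: cyclic factorization height}(i) gives $s+t\le\min(a,b)\le r$. Height $r$ therefore forces $a=b=r$, so $L$ is subdirect in $G_r\times G_r$, and it forces $t=r-s$, so $\operatorname{ord}[\mu]$ is maximal in $H^2(L,k^\times)\cong C_{p^{r-s}}$. Thus a height-$r$ element is given by the following data: $s\in\{0,\dots,r\}$, a unit $u$ modulo $p^s$ describing $L=\{(x,y):y\equiv ux \pmod{p^s}\}$, and a generator $\beta(e,f)$ of $\mu_{p^{r-s}}$. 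So the basis is indexed by triples $(s,u,\zeta)$ with $\zeta$ a primitive $p^{r-s}$-th root of unity.

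Next I will compute products in $E_r$. By Corollary~\ref{cor: abelian Clifford formula}, $x\circ y=c\,z$ is monomial with $c>0$. By Proposition~\ref{prop: cyclic factorization height}(ii), $z$ has height $\le r$, so in $E_r$ either $xy=0$ or $xy=c\,z$ with $z$ again of height $r$. I will compute $N_1=L*M$, $K_1=k_2(L)\cap k_1(M)$ and the stabilizer data for two triples $(s,u,\zeta)$ and $(s',u',\zeta')$. I expect the output to have height $r$ exactly when $s=s'$ and the two root-of-unity parameters are compatible, roughly when the product of the mixed pairings stays primitive. In that case $z$ should be the triple $(s,uu',\zeta'')$ with an explicit $\zeta''$, and $c$ should be a power of $p$. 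The expected outcome is that, for each fixed $s$, the height-$r$ elements form a finite groupoid or inverse semigroup $S_s$ under this partially defined product. Duality $x\mapsto x^\vee$ should play the role of inverse, with $x x^\vee x=c_x x$, $c_x>0$, and $xx^\vee$ idempotent up to a positive scalar. I also expect the products between blocks with different $s$ to vanish modulo lower height.

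Then I will remove the scalars. The coefficients $c(x,y)$ form a $\mathbb C^\times$-valued $2$-cocycle on the inverse semigroup, with zero absorbed. I would try to normalize by $x\mapsto c_x^{-1/2}x$, using that the $c$'s are powers of $p$ determined multiplicatively by $s$ and the kernel orders. This should make $E_r$ literally the contracted semigroup algebra $\mathbb C_0[S]$ of a finite inverse semigroup. Munn's theorem, or the decomposition into groupoid algebras of maximal subgroups, then gives semisimplicity. The maximal subgroups here are abelian groups of units and roots of unity, whose complex group algebras are semisimple.

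I expect the main obstacle to be the explicit Clifford computation in the second step. This means determining exactly when $N_{1,\pi}$ remains subdirect, and when the stabilizer cocycle $\lambda_{1,\pi}$ keeps maximal order rather than dropping to lower height, and verifying the inverse-semigroup axioms. If the inverse-semigroup structure turns out messy, my fallback is direct. I will use the anti-involution $x\mapsto x^\vee$ together with the linear functional that records, for each height-$r$ basis element, the coefficient of the idempotents $xx^\vee/c_x$. I will show the resulting Hermitian form $\langle x,y\rangle=\operatorname{tr}(xy^{\vee})$ is positive definite on $E_r$. A finite-dimensional $*$-algebra with a faithful positive trace is semisimple.
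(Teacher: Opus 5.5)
Your framework is the paper's. You normalize the height-$r$ basis so that, together with $0$, it becomes a finite inverse semigroup with $X^*=X^\vee$, and then use semisimplicity of complex inverse-semigroup algebras. Your parametrization $(s,u,\zeta)$ of the basis is correct. Write $X_{s,u,c}$ for your triple, with $\zeta$ recorded as $c\in(\mathbb Z/p^{r-s})^\times$. Your normalization $c_x^{-1/2}$ is then the paper's factor $p^{-\min(s,r-s)}$.

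The gap is that the step you defer is the entire content of the proof, and the structure you predict for it is false. Products do not preserve $s$, and different levels do not annihilate each other modulo lower height.
\begin{itemize}
\item The unit $[\Delta(G_r),1]$ is itself a height-$r$ basis element, with $s=r$. It multiplies the level-$0$ elements nontrivially; these are $G_r\times G_r$ with a decoration of order $p^r$. Your vanishing prediction is therefore incompatible with unitality.
\item A single level need not be closed. For $p=2$ and $r=1$, the element $\omega=[V,\beta]$ of Example~\ref{ex: decorated double Burnside C2} has $s=0$, yet $\omega^2=\mathbf 1$ has $s=1$.
\item Since $I_{C_p}=J_{C_p}$, one has $E_1\cong\mathbb C[\BrPic(\Vec_{C_p})]$. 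This group has order $2(p-1)$ and is nonabelian for $p\ge5$, so maximal subgroups need not be abelian. That part is harmless by Maschke.
\end{itemize}

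The correct rule is derived in the paper from the Clifford data: $K=p^{\max(s,t)}G$, the pairing $B((x,y,z),k)=\zeta^{k(cx-dz)}$, the orbit count, and the descended cocycle. It reads as follows.
\begin{itemize}
\item $X_{s,u,c}X_{t,v,d}\neq0$ if and only if $vu\equiv d^{-1}c\pmod{p^h}$, where $h=\min\{s,t,r-s,r-t\}$.
\item When nonzero, the output lies at level $q=\max\{\min(s,t),r-\max(s,t)\}$.
\item The unnormalized coefficient is $p^h$. It cancels because $\min(s,r-s)+\min(t,r-t)=h+\min(q,r-q)$.
\end{itemize}
So $E_r$ does not decompose into per-level semigroups $S_s$; you need one semigroup across all levels. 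The substantive verification is then that idempotents at different levels commute. The nonzero idempotents are $X_{s,1,c}$ with $2s\ge r$. For $s\le t$ one checks that $X_{s,1,c}X_{t,1,d}=X_{t,1,d}X_{s,1,c}$, which equals $X_{s,1,c}$ if $c\equiv d\pmod{p^{r-t}}$ and $0$ otherwise.

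Your fallback does not avoid this computation. The naive functional that sums the coefficients of idempotent basis elements is not faithful. For $r=2$, the element $x=\mathbf 1-X_{1,1,c}$ is nonzero, satisfies $xx^\vee=x$, and its idempotent coefficients sum to zero. Positivity of $\operatorname{tr}(xy^\vee)$ would therefore need a suitably weighted functional, such as the trace of the regular $*$-representation of the inverse semigroup. Constructing that again requires the full product formula.
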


\begin{proof}
Fix $r\geq1$, put $G=G_r$, choose a primitive $p^r$-th root of unity
$\zeta\in k^\times$, and write
$\mathsf U_j=(\mathbb Z/p^j\mathbb Z)^\times$, with
$\mathsf U_0=\{1\}$.  For $0\leq s\leq r$,
$u\in\mathsf U_s$, and $c\in\mathsf U_{r-s}$, let $X_{s,u,c}$ be
$p^{-\min(s,r-s)}$ times the image in $E_r$ of the decorated basis
element supported on
\[
 \{(x,y)\in G^2:y\equiv ux\pmod{p^s}\},
\]
with alternating commutator
\[
 ((x,y),(x',y'))\longmapsto\zeta^{c(xy'-x'y)}.
\]
This is well defined because $xy'-x'y$ is divisible by $p^s$, while
$c$ is taken modulo $p^{r-s}$.  Proposition~\ref{prop: cyclic factorization height}
and Corollary~\ref{cor: cyclic factorization filtration} show that these
are precisely the surviving classes with full coordinate projections
and height $r$, so the $X_{s,u,c}$ form a basis of $E_r$.  Here $s$
records the order $p^s$ of the common quotient, $u$ its isomorphism, and
$c$ the decoration of order $p^{r-s}$.  Let $S_r$ consist of these
normalized basis elements together with the absorbing zero.  The
normalization is chosen so that every product is zero or another
normalized basis element, as proved below.

For $X_{s,u,c}$ and
$X_{t,v,d}$, put
\[
 h=\min\{s,t,r-s,r-t\},
 \qquad
 q=\max\{\min(s,t),\,r-\max(s,t)\}.
\]
The two congruences below overlap modulo $p^h$; when compatible, they
determine the output slope modulo $p^q$.
The product is zero unless $vu\equiv d^{-1}c\pmod{p^h}$.
When this congruence holds, let $w\in\mathsf U_q$ be the unique unit
satisfying
\[
 w\equiv vu\pmod{p^{\min(s,t)}},
 \qquad
 w\equiv d^{-1}c\pmod{p^{r-\max(s,t)}}.
\]
Then
\[
 X_{s,u,c}X_{t,v,d}
 =
 \begin{cases}
  X_{q,w,cv^{-1}},&s\leq t,\\[1mm]
  X_{q,w,du},&s\geq t,
 \end{cases}
\]
where the final parameter is reduced modulo $p^{r-q}$.  When $s=t$,
the two expressions agree by the compatibility congruence.  Congruences
at exponent zero have their evident meaning, with $\mathsf U_0=\{1\}$,
and the formula also holds for $p=2$.  The parameter $q$ is the output
common-quotient exponent, not its factorization height; every surviving
output has height $r$.

We derive this formula from
Corollary~\ref{cor: abelian Clifford formula}.  Both middle projections
are $G$, so there is a single double coset.  For the underlying
unnormalized decorated elements, the fiber product and middle kernel are
\[
 \begin{aligned}
 F&=\{(x,y,z):
   y\equiv ux\pmod{p^s},\
   z\equiv vy\pmod{p^t}\},\\
 K&=p^{\max(s,t)}G,
 &|K|&=p^{r-\max(s,t)}.
 \end{aligned}
\]
Here $K$ is embedded in $F$ as the elements $(0,k,0)$.
The alternating commutator of the product decoration is
\[
 \begin{aligned}
 B\bigl((x,y,z),(x',y',z')\bigr)
 &=\zeta^{c(xy'-x'y)+d(yz'-y'z)},\\
 B((x,y,z),k)&=\zeta^{k(cx-dz)}\quad(k\in K).
 \end{aligned}
\]
Therefore the stabilizer, in coordinates $(x,z)$, is
\[
 N=
 \left\{(x,z):
  z\equiv vux\pmod{p^{\min(s,t)}},\
  dz\equiv cx\pmod{p^{r-\max(s,t)}}
 \right\}.
\]
Its first projection is full exactly when
$vu\equiv d^{-1}c\pmod{p^h}$.  If compatibility fails, the output has
proper first projection and hence height at most $r-1$; it vanishes in
$E_r$.

Assume compatibility.  The first congruence defining $N$ comes from the
underlying composition and the second from the Clifford stabilizer;
together they determine the support indexed by $q,w$ above.  The image of $F$ under
$(x,y,z)\mapsto(x,z)$ is defined by the first congruence alone.
This image acts on the simple modules of the twisted algebra on $K$
by the character twists determined by the displayed pairing, with
stabilizer $N$.  Each orbit therefore has size
$p^{q-\min(s,t)}$.
Since $K$ is cyclic, the twisted algebra has
$p^{r-\max(s,t)}$ simple modules.
Thus the number of orbits is
\[
 p^{r-\max(s,t)-(q-\min(s,t))}=p^h.
\]
On the inverse image of $N$ in $F$ under
$(x,y,z)\mapsto(x,z)$, the kernel $K$ lies in the radical of $B$, so the
restricted bicharacter descends to $N$.  If $s\leq t$, use $y=v^{-1}z$
and obtain
$B=\zeta^{cv^{-1}(xz'-x'z)}$; if $s\geq t$, use $y=ux$ and obtain
$B=\zeta^{du(xz'-x'z)}$.
These computations give the two final parameters above.
The unnormalized Clifford coefficient is $p^h$, and it disappears after
normalization because
$\min(s,r-s)+\min(t,r-t)=h+\min(q,r-q)$.
This proves the normalized multiplication formula and shows that $S_r$
is closed under multiplication.

Recall that a semigroup is inverse if each element $x$ has a unique
$x^*$ satisfying $xx^*x=x$ and $x^*xx^*=x^*$
\cite[Section~2.1]{SteinbergMobiusII}.  To prove that $S_r$ is inverse,
it suffices to exhibit such inverses and to show that all idempotents
commute \cite[Proposition~8.1]{SteinbergMobiusII}.

Define $0^*=0$ and $X_{s,u,c}^*=X_{s,u^{-1},c}$.  For
$a=\min(s,r-s)$, direct substitution
gives
\[
 X_{s,u,c}X_{s,u,c}^*
 =X_{r-a,1,cu},
 \qquad
 X_{s,u,c}^*X_{s,u,c}
 =X_{r-a,1,cu^{-1}}.
\]
The final parameters are reduced modulo $p^a$.  Hence $XX^*X=X$ and
$X^*XX^*=X^*$.  The nonzero idempotents are exactly
$X_{s,1,c}$ with $2s\geq r$.  Indeed, idempotence forces
$\max(s,r-s)=s$ and $u^2=u$, hence $2s\geq r$ and $u=1$; the converse
follows by substitution.  For
$\lceil r/2\rceil\leq s\leq t\leq r$, with
$c\in\mathsf U_{r-s}$ and $d\in\mathsf U_{r-t}$, the product formula gives
\[
 X_{s,1,c}X_{t,1,d}
 =X_{t,1,d}X_{s,1,c}
 =
 \begin{cases}
  X_{s,1,c},&c\equiv d\pmod{p^{r-t}},\\
  0,&\text{otherwise}
 \end{cases}.
\]
Thus all idempotents commute, including zero, and the cited criterion
shows that $S_r$ is a finite inverse semigroup.

The complex algebra of a finite inverse semigroup is semisimple by
\cite[Corollary~4.7]{SteinbergMobiusII}.  The algebra $E_r$ is its
quotient by the one-dimensional ideal spanned by the basis vector
corresponding to the absorbing zero, and is therefore semisimple.
\end{proof}

\begin{theorem}
\label{thm: cyclic factorization decomposition}
For $G_n=C_{p^n}$, there is a noncanonical algebra isomorphism
\begin{equation}
\label{eq: cyclic prime power decomposition}
 \mathbb C\otimes_{\mathbb Z}R_{C_{p^n}}
 \cong
 \operatorname{Mat}_{n+1}(\mathbb C)
 \times
 \prod_{j=1}^n
 \operatorname{Mat}_{n-j+1}(E_j).
\end{equation}
Consequently, $\mathbb C\otimes_{\mathbb Z}R_{C_N}$ is semisimple for
every finite cyclic group $C_N$.
\end{theorem}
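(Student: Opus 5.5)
The plan is to assemble the decomposition from the filtration of Corollary~\ref{cor: cyclic factorization filtration},
\[
 \mathcal F_0\subseteq\mathcal F_1\subseteq\cdots\subseteq\mathcal F_n
 =\mathbb C\otimes_{\mathbb Z}R_{C_{p^n}},
\]
and to show that every successive quotient is semisimple. For the quotients:
\begin{itemize}
\item The bottom piece is $\mathcal F_0\cong\operatorname{Mat}_{n+1}(\mathbb C)$ by \eqref{eq: cyclic bottom matrix algebra}.
\item For $r\geq1$, one has $\mathcal F_r/\mathcal F_{r-1}\cong\operatorname{Mat}_{n-r+1}(E_r)$ by Proposition~\ref{prop: cyclic matrix layers}.
\item Each $E_r$ is semisimple by Proposition~\ref{prop: cyclic positive layers semisimple}.
\end{itemize}
Since a matrix algebra over a semisimple algebra is semisimple, every layer of the filtration is a semisimple algebra. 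This includes the unital algebra $\mathcal F_0$, which is an ideal of the whole ring.

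The key step is to show that the Jacobson radical of $\mathcal A=\mathcal F_n$ vanishes. I would argue by induction on $r$, using the following standard fact for finite-dimensional algebras: if $I\subseteq\mathcal A$ is a two-sided ideal, then $\operatorname{Jac}(I)=I\cap\operatorname{Jac}(\mathcal A)$, as already used in \eqref{eq: bottom radical in full radical}. Here $I$ is regarded as a possibly nonunital algebra.

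Let $\mathfrak r=\operatorname{Jac}(\mathcal A)$. Its image in $\mathcal A/\mathcal F_{n-1}$ lies in the radical of that quotient, which is zero since the quotient is semisimple. Hence $\mathfrak r\subseteq\mathcal F_{n-1}$, and therefore $\mathfrak r=\operatorname{Jac}(\mathcal F_{n-1})$ by the hereditary property. Repeating this with the ideal $\mathcal F_{r-1}$ of $\mathcal F_r$ pushes the radical down to $\operatorname{Jac}(\mathcal F_0)$, which is $0$ because $\mathcal F_0$ is a full matrix algebra.

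One subtlety is that $\mathcal F_r$ is nonunital for $r<n$. For nonunital finite-dimensional algebras I would use the characterization of the radical as the largest nilpotent ideal. Then $\operatorname{Jac}(\mathcal F_r)$ maps to a nilpotent ideal of the semisimple quotient $\mathcal F_r/\mathcal F_{r-1}$, so it lies in $\mathcal F_{r-1}$, and it is a nilpotent ideal of $\mathcal F_{r-1}$ as well. Thus $\mathcal A$ is semisimple.

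It remains to get the explicit product shape. A semisimple algebra splits every ideal as a direct factor, and the filtration splits accordingly. Choosing central idempotent complements layer by layer gives a noncanonical isomorphism
\[
 \mathcal A\cong\mathcal F_0\times\prod_r\mathcal F_r/\mathcal F_{r-1}.
\]
Substituting the identifications of the layers yields \eqref{eq: cyclic prime power decomposition}. I expect this splitting to be the only point needing care: it uses that each $\mathcal F_r$ is an ideal of $\mathcal A$, so that semisimplicity of $\mathcal A$ lets one pick central idempotents compatibly.

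For general $N$, I would use the based-ring tensor decomposition $R_{C_N}\cong\bigotimes_{p\mid N}R_{C_{p^{n_p}}}$ noted at the start of Subsection~\ref{subsect: cyclic structural example}. After complexifying, a tensor product of finite-dimensional semisimple algebras over the algebraically closed field $\mathbb C$ is semisimple, since each factor is a product of matrix algebras and tensor products of those are again matrix algebras. This gives semisimplicity of $\mathbb C\otimes_{\mathbb Z}R_{C_N}$.
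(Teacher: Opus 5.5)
Your proposal is correct and follows essentially the same route as the paper. Both push $\operatorname{Jac}(\mathcal A)$ down the factorization chain, using that each layer $\operatorname{Mat}_{n-r+1}(E_r)$ and the bottom $\mathcal F_0\cong\operatorname{Mat}_{n+1}(\mathbb C)$ are semisimple, then split the chain by complementary ideals, and finish with the Sylow tensor decomposition for general $N$. Your detour through $\operatorname{Jac}(\mathcal F_r)$ for the nonunital ideals is harmless but unnecessary, since the paper simply tracks the single nilpotent ideal $\mathfrak r=\operatorname{Jac}(\mathcal A)$ through the layers.
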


\begin{proof}
For $n=0$, the formula is the bottom case
\eqref{eq: cyclic bottom matrix algebra}.  Assume $n\geq1$ and put
$\mathfrak r=\operatorname{Jac}(\mathcal F_n)$.  Its image in the last
quotient in the factorization chain,
$\mathcal F_n/\mathcal F_{n-1}\cong E_n$, is zero.  Hence
$\mathfrak r\subseteq\mathcal F_{n-1}$.  More generally, if
$\mathfrak r\subseteq\mathcal F_r$, its image in
$\mathcal F_r/\mathcal F_{r-1}$ is a nilpotent two-sided ideal in a
semisimple algebra and is zero.  Thus $\mathfrak r$ descends to
$\mathcal F_0$, which is semisimple by
\eqref{eq: cyclic bottom matrix algebra}, and $\mathfrak r=0$.
Every ideal in a
finite-dimensional semisimple algebra has a complementary ideal.
Successively splitting the factorization chain and using
Proposition~\ref{prop: cyclic matrix layers} together with
\eqref{eq: cyclic bottom matrix algebra} gives
\eqref{eq: cyclic prime power decomposition}.

For arbitrary $N$, the tensor-product decomposition at the beginning of
this subsection expresses $R_{C_N}$ as a tensor product of the rings for its
cyclic Sylow subgroups.  The tensor product of finite-dimensional semisimple
complex algebras is semisimple, proving the general cyclic conclusion.
\end{proof}

\begin{corollary}
\label{cor: full semisimplicity cyclicity}
For a finite group $G$, the following conditions are equivalent:
\begin{enumerate}
\item[\textup{(i)}] $G$ is cyclic;
\item[\textup{(ii)}] $\mathbb C\otimes_{\mathbb Z}J_G$ is semisimple;
\item[\textup{(iii)}] $\mathbb C\otimes_{\mathbb Z}R_G$ is semisimple.
\end{enumerate}
\end{corollary}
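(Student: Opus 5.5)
The plan is to assemble the equivalence from results already in place, proving (i)$\Leftrightarrow$(ii) and (i)$\Rightarrow$(iii)$\Rightarrow$(ii).

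For (i)$\Leftrightarrow$(ii) I will quote Theorem~\ref{thm: bottom cyclicity criterion} directly. There, condition (iii) is exactly semisimplicity of $\mathcal J_G=\mathbb C\otimes_{\mathbb Z}J_G$, and it is shown equivalent to cyclicity of $G$.

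For (i)$\Rightarrow$(iii) I will use the final assertion of Theorem~\ref{thm: cyclic factorization decomposition}. It states that $\mathbb C\otimes_{\mathbb Z}R_{C_N}$ is semisimple for every finite cyclic group. That argument combines three ingredients: the prime-power decomposition \eqref{eq: cyclic prime power decomposition}, whose layers are semisimple by Propositions~\ref{prop: cyclic matrix layers} and~\ref{prop: cyclic positive layers semisimple} together with \eqref{eq: cyclic bottom matrix algebra}; the tensor factorization $R_{C_N}\cong\bigotimes_p R_{C_{p^{n_p}}}$; and the fact that tensor products of semisimple complex algebras are semisimple. I will not reprove any of these.

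For (iii)$\Rightarrow$(ii) I will use the hereditary property \eqref{eq: bottom radical in full radical}, namely $\operatorname{Jac}(\mathcal J_G)=\mathcal J_G\cap\operatorname{Jac}(\mathcal A_G)$. If $\mathcal A_G$ is semisimple, its radical is zero, so $\operatorname{Jac}(\mathcal J_G)=0$. Since $\mathcal J_G$ is a two-sided ideal of a semisimple algebra, it is a direct factor and hence semisimple. Equivalently, Proposition~\ref{prop: radical sandwich algebra} then gives $r_G=n_G$, so $C_G$ is nonsingular.

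The only step that needs care is applying \eqref{eq: bottom radical in full radical} correctly. $\mathcal J_G$ may lack a unit, so "semisimple" for it should be read as having zero Jacobson radical, which is the sense of Proposition~\ref{prop: radical sandwich algebra}. Under that reading the implication is immediate. The substantive obstacles, the cyclic semisimplicity and the noncyclic singularity of $C_G$, have already been handled in the cited results, so this corollary is a bookkeeping assembly of them.
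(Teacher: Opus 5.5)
Your proposal is correct and matches the paper's proof: (i)$\Leftrightarrow$(ii) comes from Theorem~\ref{thm: bottom cyclicity criterion}, (i)$\Rightarrow$(iii) from Theorem~\ref{thm: cyclic factorization decomposition}, and (iii)$\Rightarrow$(ii) from the hereditary identity \eqref{eq: bottom radical in full radical}. Your remark that semisimplicity of the possibly nonunital ideal $\mathcal J_G$ should be read as vanishing of its Jacobson radical is a harmless clarification, consistent with Proposition~\ref{prop: radical sandwich algebra}.
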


\begin{proof}
The equivalence of \textup{(i)} and \textup{(ii)} is
Theorem~\ref{thm: bottom cyclicity criterion}, and
\textup{(i)} implies \textup{(iii)} by
Theorem~\ref{thm: cyclic factorization decomposition}.
Finally, \eqref{eq: bottom radical in full radical} shows that
\textup{(iii)} implies \textup{(ii)}.
\end{proof}

\section{Brauer--Picard orbits and the Morita class of $\Vec_G$}
\label{sect: BrPic orbits Morita class}

The distinguished basis $\Omega_G$ parametrizes indecomposable
$\Vec_G$-module categories.  For $a=[K,\nu]\in\Omega_G$, write
\[
 \mathcal M_a=\mathcal M(K,\nu)
 =\operatorname{RMod}_{A(K,\nu^{-1})}(\Vec_G)
\]
and define the dual fusion category
\[
 \mathcal C_a
 =
 \operatorname{Fun}_{\Vec_G}(\mathcal M_a,\mathcal M_a).
\]
This category is Morita equivalent to $\Vec_G$.  By \cite[Proposition~3.6 and Remark~3.7]{MN}, one has
\[
 \mathcal C_a\simeq_\otimes\mathcal C_b
 \quad\Longleftrightarrow\quad
 a\text{ and }b\text{ lie in the same }
 \BrPic(\Vec_G)\text{-orbit}.
\]
Thus
\[
 \BrPic(\Vec_G)\backslash\Omega_G
\]
parametrizes the tensor-equivalence classes of fusion categories in the
Morita class of $\Vec_G$.  Write
$o(G)=\#\bigl(\BrPic(\Vec_G)\backslash\Omega_G\bigr)$.
We first give general orbit results and characterize transitivity.  For
abelian $G$, we obtain lower bounds for the number of orbits, determine all
possible Lagrangian types, solve the orbit problem for homocyclic
$p$-groups, and show that abstract type is not complete for finite abelian groups with mixed exponents.

\subsection{General orbit results}
\label{subsect: orbit underlying groups}

For a finite group $H$, let $\Rad_{\mathrm{sol}}(H)$ denote its solvable
radical.  Put $R=\Rad_{\mathrm{sol}}(G)$ and $\overline G=G/R$, and let
$\pi:G\to\overline G$ be the quotient map.  We write cohomology classes
additively in this subsection, while cocycle formulas remain
multiplicative.

\begin{proposition}
\label{prop: BrPic solvable quotient action}
There is a canonical homomorphism
\[
 \rho_G^{\mathrm{sol}}:\BrPic(\Vec_G)\longrightarrow\Out(\overline G)
\]
for which the assignment taking $[K,\nu]\in\Omega_G$ to the
$\overline G$-conjugacy class of $\pi(K)$ is equivariant.  Thus, if
$u\triangleright[K,\nu]=[K',\nu']$ and
$\sigma\in\Aut(\overline G)$ represents $\rho_G^{\mathrm{sol}}(u)$, then $\pi(K')$ is
conjugate in $\overline G$ to $\sigma(\pi(K))$.
\end{proposition}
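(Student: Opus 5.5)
Every element of $\BrPic(\Vec_G)$ is represented by an invertible decorated biset, so by Theorem~\ref{thm: invertibility criterion} and Corollary~\ref{cor: Goursat cohomological Morita criterion} it has the form $u=[L,\mu]$ with $L=L_{A,B,\phi}\leq G\times G$, where $A,B\triangleleft G$ are abelian and $\phi:G/A\xrightarrow{\sim}G/B$. Abelian normal subgroups are solvable, so $A,B\leq R$. Hence $\phi$ induces an isomorphism $\overline\phi:G/R\to G/R$, since $R/A$ is the solvable radical of $G/A$ (an extension of a solvable group by a solvable group is solvable) and $\phi$ carries $R/A$ onto $R/B$. Concretely, $\overline\phi$ is the image of $\pi\times\pi$ applied to $L$, namely $\overline L=\{(\pi(g),\pi(h)):(g,h)\in L\}$, which is the graph of $\overline\phi$. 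I define $\rho_G^{\mathrm{sol}}(u)$ to be the class of $\overline\phi$ in $\Out(\overline G)$. Independence of choices is straightforward: conjugating $L$ by $(x,y)\in G\times G$ changes $\overline\phi$ by inner automorphisms on both sides, and $\overline\phi$ depends only on $L$, not on $\mu$.

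Next comes the equivariance. I would use Corollary~\ref{thm: one-sided Clifford action} for $[L,\mu]\triangleright[K,\nu]$. Since $u$ is invertible, the output is a single term $[N_{g,\pi},\lambda_{g,\pi}]$, and it lies inside $N_g=p_1(L\cap(G\times{}^gK))$. Modulo $R$, the group $N_g$ maps into $\overline\phi^{-1}(\pi({}^gK))$, that is, under the graph of $\overline\phi$ the first coordinates correspond to second coordinates in ${}^gK$. Thus $\pi(K')\leq\overline\phi^{-1}(\pi({}^gK))$. To be consistent with whichever direction of $\overline\phi$ is chosen in the definition, I would orient $\sigma$ so that this containment reads $\pi(K')\leq\sigma(\pi(K))$ up to conjugacy.

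The main obstacle is upgrading this containment to equality. My first step would be a symmetry argument: apply the same containment to $u^{-1}$, which by Theorem~\ref{thm: invertibility criterion} is $[L^\dagger,(\mu^\dagger)^{-1}]$ with induced map $\overline\phi^{-1}$, acting on $[K',\nu']$ to return $[K,\nu]$. This gives $\pi(K)\leq\sigma^{-1}(\pi(K'))$ up to $\overline G$-conjugacy. Combined with the first containment, $\pi(K)$ is conjugate into a conjugate of itself, so the two subgroups have equal orders and the containments are equalities up to conjugacy. Homomorphism property: composing invertible bisets composes the Goursat graphs modulo $R$; the Clifford output for a product of invertibles is a single stabilizer $N_{g,\pi}$ contained in $L*M$. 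Since the composite is again invertible, its projection to $\overline G\times\overline G$ is a graph, hence equal to the graph of $\overline\phi_L\circ\overline\phi_M$ (containment of graphs of bijections forces equality). Therefore $\rho_G^{\mathrm{sol}}$ is multiplicative, and it respects the unit since $[\Delta(G),1]\mapsto\id$.
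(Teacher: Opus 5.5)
Your proposal is essentially the paper's proof. Both descend the Goursat isomorphism to $\overline G=G/R$ using that the abelian coordinate kernels lie in $R$. Both get multiplicativity from a subdirect composite $J\leq L*M$ whose image modulo $R$ lies in a graph. Both get equivariance from $K'\leq p_1\bigl(L\cap(G\times{}^gK)\bigr)$, with the same bound for $u^{-1}$ forcing equal orders.

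The one thing to correct is the orientation. With $L=\{(x,y):\phi(xA)=yB\}$, the image $(\pi\times\pi)(L*M)$ is the graph of $\overline\phi_M\circ\overline\phi_L$, not $\overline\phi_L\circ\overline\phi_M$. So your literal definition $u\mapsto[\overline\phi]$ is an anti-homomorphism. Your own containment $\pi(K')\leq\overline\phi^{-1}(\pi({}^gK))$ also shows that equivariance holds with $\overline\phi^{-1}$. Define $\rho_G^{\mathrm{sol}}(u)=[\overline\phi^{-1}]$ from the outset, which is the paper's $\sigma_L$. Then the homomorphism property and the stated equivariance hold with the same $\sigma$.
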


\begin{proof}
Write $u=[L,\mu]$.  By Theorem~\ref{thm: invertibility criterion},
$L$ is subdirect and its coordinate kernels $A_i=k_i(L)$ are abelian
normal subgroups of $G$.  Goursat's isomorphism
$\phi:G/A_1\xrightarrow{\sim}G/A_2$ is characterized by
$(x,y)\in L$ if and only if $\phi(xA_1)=yA_2$.
Both $A_i$ lie in $R$.  For any solvable normal subgroup
$B\triangleleft H$, one has
$\Rad_{\mathrm{sol}}(H/B)=\Rad_{\mathrm{sol}}(H)/B$: the indicated
image is solvable and normal, and the preimage of every solvable normal
subgroup of $H/B$ is a solvable normal extension of $B$.
Consequently, $\phi$ carries $R/A_1$ onto $R/A_2$ and induces
$\overline\phi\in\Aut(\overline G)$.

Put $\sigma_L=\overline\phi^{-1}$.  Then
\[
 (\pi\times\pi)(L)
 =\{(\sigma_L(t),t):t\in\overline G\}.
\]
Conjugation of $L$ by $(a,b)\in G\times G$ replaces $\sigma_L$ by
$c_{\pi(a)}\circ\sigma_L\circ c_{\pi(b)}^{-1}$, where $c_z$
denotes conjugation by $z$.  Thus $\rho_G^{\mathrm{sol}}([L,\mu])=[\sigma_L]$
is well defined; changing the cocycle does not change this construction.

Let $v=[M,\tau]$.  Since $p_2(L)=p_1(M)=G$, the Clifford formula
gives a representative $uv=[J,\lambda]$ with $J\leq L*M$.
Theorem~\ref{thm: invertibility criterion} implies that $J$ is
subdirect.  Its image in $\overline G\times\overline G$ is therefore
a subdirect subgroup of the graph of $\sigma_L\circ\sigma_M$,
and hence equals that graph.  It follows that
$\rho_G^{\mathrm{sol}}(uv)=\rho_G^{\mathrm{sol}}(u)\rho_G^{\mathrm{sol}}(v)$; the diagonal subgroup gives the
identity.

For equivariance, Corollary~\ref{thm: one-sided Clifford action}
allows a representative of $[K',\nu']$ with
\[
 K'\leq p_1\bigl(L\cap(G\times K)\bigr).
\]
Hence $\pi(K')\leq\sigma_L(\pi(K))$, so
$|\pi(K')|\leq|\pi(K)|$.  Apply the same containment to
$u^{-1}\triangleright[K',\nu']=[K,\nu]$.
The representative obtained may be conjugate to $K$, but its image
has the same order as $\pi(K)$.  Thus
$|\pi(K)|\leq|\pi(K')|$, and the first containment is an equality
for the chosen representatives.  This proves the assertion on
conjugacy classes.
\end{proof}

\begin{corollary}
\label{cor: semisimple core orbit invariant}
If $[K,\nu]$ and $[K',\nu']$ belong to the same
$\BrPic(\Vec_G)$-orbit, then
\[
 K/\Rad_{\mathrm{sol}}(K)
 \cong K'/\Rad_{\mathrm{sol}}(K').
\]
In particular, solvability of the supporting subgroup and the multiset
of its nonabelian composition factors are constant on each orbit.
\end{corollary}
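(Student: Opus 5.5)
The corollary follows from Proposition~\ref{prop: BrPic solvable quotient action} once we show that the abstract group $K/\Rad_{\mathrm{sol}}(K)$ depends only on the $\overline G$-conjugacy class of $\pi(K)$, up to an automorphism of $\overline G$. Write $R=\Rad_{\mathrm{sol}}(G)$. For $K\leq G$ we have $\pi(K)=KR/R\cong K/(K\cap R)$. The first step is to prove the identity
\[
 \Rad_{\mathrm{sol}}(K)/(K\cap R)=\Rad_{\mathrm{sol}}\bigl(K/(K\cap R)\bigr).
\]
The subgroup $K\cap R$ is a solvable normal subgroup of $K$, being a subgroup of the solvable group $R$. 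The fact already established in the proof of Proposition~\ref{prop: BrPic solvable quotient action}, namely $\Rad_{\mathrm{sol}}(H/B)=\Rad_{\mathrm{sol}}(H)/B$ for every solvable normal $B\triangleleft H$, applies with $H=K$ and $B=K\cap R$. It gives
\[
 K/\Rad_{\mathrm{sol}}(K)\cong \pi(K)/\Rad_{\mathrm{sol}}(\pi(K)).
\]

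The second step uses the equivariance in Proposition~\ref{prop: BrPic solvable quotient action}. If $[K',\nu']=u\triangleright[K,\nu]$ and $\sigma$ represents $\rho_G^{\mathrm{sol}}(u)$, then $\pi(K')$ is conjugate in $\overline G$ to $\sigma(\pi(K))$, so $\pi(K')\cong\pi(K)$ as abstract groups. An isomorphism of groups carries the solvable radical onto the solvable radical, so the quotients by solvable radicals are isomorphic. Combining this with the first step gives $K/\Rad_{\mathrm{sol}}(K)\cong K'/\Rad_{\mathrm{sol}}(K')$.

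For the final sentence of the statement, $K$ is solvable exactly when $K/\Rad_{\mathrm{sol}}(K)$ is trivial. For the composition factors, take a composition series through $\Rad_{\mathrm{sol}}(K)$. The factors lying in the solvable radical are cyclic of prime order, so the nonabelian composition factors of $K$ are exactly those of $K/\Rad_{\mathrm{sol}}(K)$. By Jordan--Hölder this multiset is an invariant of the isomorphism type. No step here is a real obstacle; the only point needing care is checking that $K\cap R$ is normal and solvable in $K$, so that the radical lemma applies.
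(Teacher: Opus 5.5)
Your proposal is correct and follows the paper's proof essentially step for step. Both arguments apply the radical-quotient identity to the solvable normal subgroup $K\cap R$ of $K$ to get $K/\Rad_{\mathrm{sol}}(K)\cong\pi(K)/\Rad_{\mathrm{sol}}(\pi(K))$, then combine the equivariance of Proposition~\ref{prop: BrPic solvable quotient action} with invariance of solvable radicals under isomorphisms, and finish with the same composition-series remark.
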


\begin{proof}
The subgroup $K\cap R$ is solvable and normal in $K$, and
$\pi(K)\cong K/(K\cap R)$.  The radical-quotient identity proved above
therefore gives
\[
 \pi(K)/\Rad_{\mathrm{sol}}(\pi(K))
 \cong K/\Rad_{\mathrm{sol}}(K).
\]
Apply Proposition~\ref{prop: BrPic solvable quotient action} to $K$ and
$K'$.  Automorphisms preserve solvable radicals, so these quotients are
isomorphic.  Solvable normal subgroups contribute only abelian
composition factors, giving the last assertion.
\end{proof}

Thus the isomorphism type of $K/\Rad_{\mathrm{sol}}(K)$ is an invariant
of the tensor-equivalence class of the corresponding dual fusion category.

\begin{corollary}
\label{cor: orbit count solvable quotient}
Let $\operatorname{Sub}(\overline G)$ denote the set of subgroups of
$\overline G$.  Then
\[
 o(G)\geq
 \#\bigl(\Aut(\overline G)\backslash
              \operatorname{Sub}(\overline G)\bigr).
\]
\end{corollary}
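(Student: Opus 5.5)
The plan is to construct a map from orbits to automorphism orbits of subgroups of $\overline G$ and show it is surjective. Define
\[
 \Theta:\Omega_G\longrightarrow
 \Aut(\overline G)\backslash\operatorname{Sub}(\overline G),
 \qquad [K,\nu]\longmapsto \Aut(\overline G)\cdot\pi(K).
\]
This is well defined on $\Omega_G$: replacing $K$ by a $G$-conjugate replaces $\pi(K)$ by an $\overline G$-conjugate, and inner automorphisms lie in $\Aut(\overline G)$. The cocycle plays no role.

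Next, $\Theta$ is constant on $\BrPic(\Vec_G)$-orbits. By Proposition~\ref{prop: BrPic solvable quotient action}, if $u\triangleright[K,\nu]=[K',\nu']$, then $\pi(K')$ is $\overline G$-conjugate to $\sigma(\pi(K))$. Here $\sigma\in\Aut(\overline G)$ is any representative of $\rho_G^{\mathrm{sol}}(u)$. Hence $\pi(K')$ and $\pi(K)$ lie in the same $\Aut(\overline G)$-orbit. So $\Theta$ descends to a map
\[
 \overline\Theta:\BrPic(\Vec_G)\backslash\Omega_G\longrightarrow
 \Aut(\overline G)\backslash\operatorname{Sub}(\overline G).
\]

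Finally, $\overline\Theta$ is surjective. Given $S\leq\overline G$, take $K=\pi^{-1}(S)\leq G$, so that $\pi(K)=S$. The element $[K,1]\in\Omega_G$ maps to the orbit of $S$. Surjectivity then gives $o(G)\geq\#\bigl(\Aut(\overline G)\backslash\operatorname{Sub}(\overline G)\bigr)$.

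I do not expect a real obstacle, since the substance is already in Proposition~\ref{prop: BrPic solvable quotient action}. The only point needing care is the middle step. The proposition states equivariance only up to the choice of representative $\sigma$ and up to conjugacy. Both ambiguities are absorbed by passing to $\Aut(\overline G)$-orbits, because $\Inn(\overline G)\leq\Aut(\overline G)$.
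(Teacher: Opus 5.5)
Your proposal is correct and follows the same argument as the paper. The paper also uses Proposition~\ref{prop: BrPic solvable quotient action} to show that the assignment $[K,\nu]\mapsto\Aut(\overline G)\cdot\pi(K)$ is constant on Brauer--Picard orbits, and gets surjectivity from $\pi(\pi^{-1}(H))=H$.
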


\begin{proof}
By Proposition~\ref{prop: BrPic solvable quotient action}, the
assignment taking $[K,\nu]$ to the $\Aut(\overline G)$-orbit of
$\pi(K)$ is constant on Brauer--Picard orbits.  It is surjective,
since $H\leq\overline G$ is the image of $\pi^{-1}(H)\leq G$.
\end{proof}

In particular, distinct subgroup orders occurring in $\overline G$ give
distinct Brauer--Picard orbits.

\begin{corollary}
\label{cor: radical free orbit criterion}
Suppose $\Rad_{\mathrm{sol}}(G)=1$.  Then $[K,\nu]$ and $[K',\nu']$
belong to the same Brauer--Picard orbit if and only if there exists
$\alpha\in\Aut(G)$ with $\alpha(K)=K'$ such that
\[
 [\nu']-\alpha_*[\nu]\in
 \operatorname{im}\!\left(
 H^2(G,k^\times)\xrightarrow{\operatorname{res}}H^2(K',k^\times)
 \right).
\]
Here transport is defined by
$(\alpha_*\nu)(\alpha(k),\alpha(l))=\nu(k,l)$ for $k,l\in K$.
\end{corollary}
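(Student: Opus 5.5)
When $\Rad_{\mathrm{sol}}(G)=1$, every abelian normal subgroup of $G$ is trivial. So by Theorem~\ref{thm: invertibility criterion}, every invertible basis element has trivial coordinate kernels $A=B=1$. Its subgroup is therefore the graph $L_\phi=\{(\phi(y),y):y\in G\}$ of an automorphism $\phi\in\Aut(G)$, and the perfectness condition on the trivial pairing is vacuous. The first step is to record this description of $\BrPic(\Vec_G)$: its elements are the classes $[L_\phi,\mu]$ with $\phi\in\Aut(G)$ and $[\mu]\in H^2(L_\phi,k^\times)\cong H^2(G,k^\times)$. Here the identification is pullback along the isomorphism $G\to L_\phi$, $y\mapsto(\phi(y),y)$.

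The second step computes the action on $\Omega_G$ using Corollary~\ref{thm: one-sided Clifford action}, specialized to $L=L_\phi$.

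\textbf{Double cosets.} Since $p_2(L_\phi)=G$, there is a single double coset, and we may take $g=1$.

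\textbf{Kernel and image.} We have $K_1=k_2(L_\phi)\cap K=1$, so $A_1=k$ has one simple module. Then
\[
F_1=\{(\phi(y),y):y\in K\}\cong K,\qquad N_1=\phi(K).
\]
Since $K_1$ is trivial, the stabilizer is all of $N_1$.

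\textbf{Output cocycle.} The output cocycle is $\eta_1$ transported along $q_1$. It therefore corresponds to $\mu|_{F_1}\cdot\operatorname{pr}_2^*\nu$, viewed on $\phi(K)$ via $\phi(y)\mapsto y$.

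Writing $[\mu]=\operatorname{pr}_2^*[\omega]$ with $[\omega]\in H^2(G,k^\times)$, this gives the formula
\[
 [L_\phi,\mu]\triangleright[K,\nu]
 =\bigl[\phi(K),\ \phi_*\bigl([\nu]+\operatorname{res}^G_K[\omega]\bigr)\bigr].
\]
Since $\phi_*\operatorname{res}^G_K=\operatorname{res}^G_{\phi(K)}(\phi^{-1})^*$, this equals
\[
 \bigl[\phi(K),\ \phi_*[\nu]+\operatorname{res}^G_{\phi(K)}[\omega']\bigr]
\]
for some $[\omega']\in H^2(G,k^\times)$. Moreover, $[\omega']$ ranges over all of $H^2(G,k^\times)$ as $[\omega]$ does. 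The bookkeeping of signs and inverse conventions in $\lambda_{1,\pi}$ and $\eta_1$ is the only delicate part. Since $H^2(G,k^\times)$ is a group, a sign flip does not affect the image set, so the final criterion is insensitive to those conventions.

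The third step draws the conclusion. Two pairs lie in the same orbit if and only if $[K',\nu']$ is $G$-conjugate to some $\bigl[\alpha(K),\alpha_*[\nu]+\operatorname{res}[\omega]\bigr]$. Conjugation by $x\in G$ is itself the inner automorphism $c_x$, acting with trivial extra class, so conjugacy can be absorbed into $\alpha$. Also, $c_x$ acts trivially on $H^2(G,k^\times)$, so transporting $\operatorname{res}[\omega]$ stays in the image of restriction. Replacing $\alpha$ by $c_x\alpha$ therefore yields exactly the stated condition: $\alpha(K)=K'$ and $[\nu']-\alpha_*[\nu]\in\operatorname{im}(\operatorname{res})$.

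The converse direction constructs $u=[L_\alpha,\operatorname{pr}_2^*\omega]$ realizing the move. I expect the main obstacle to be verifying carefully, inside the Clifford data, that the output cocycle is literally the transport of $\mu|_{F_1}\operatorname{pr}_2^*\nu$ under $q_1$, with no extra twist from the intertwiners. This holds because $K_1=1$: the intertwiners $c_n$ can be taken to be the multiplication isomorphisms, so $\lambda_{1,\pi}$ is the transported $\eta_1$ up to the paper's inversion convention.
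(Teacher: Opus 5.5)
Your proposal is correct and follows the paper's proof: trivial abelian normal subgroups force every invertible class to be a decorated graph $[L_\alpha,\mu]$, the one-sided Clifford formula (one double coset, $K_1=1$, $N_1=\alpha(K)$) gives the output $[\alpha(K),\alpha_*[\nu]+\operatorname{res}^G_{\alpha(K)}[\xi]]$, and $G$-conjugacy is absorbed into $\alpha$ because transport commutes with restriction; the paper decorates by $p_1^*\xi$ rather than $\operatorname{pr}_2^*\omega$, which makes your conversion step unnecessary. One small correction: the criterion is \emph{not} insensitive to an inversion of the $\nu$-part, since an overall inversion would turn $[\nu']-\alpha_*[\nu]$ into $[\nu']+\alpha_*[\nu]$; however, taking the $c_n$ to be the multiplication maps gives exactly $\lambda_{1,\pi}=\eta_1\circ(q_1^{-1}\times q_1^{-1})$ under the paper's convention, which is also forced by unitality of $[\Delta(G),1]$, so your hedge is not needed.
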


\begin{proof}
Every abelian normal subgroup of $G$ is trivial.  Hence, by
Theorem~\ref{thm: invertibility criterion}, if
$[L,\mu]\in\BrPic(\Vec_G)$, then $k_1(L)=k_2(L)=1$, so
\[
 L=L_\alpha=\{(\alpha(g),g):g\in G\}
\]
for some $\alpha\in\Aut(G)$.  Since $p_1:L_\alpha\to G$ is an
isomorphism, we may choose $\mu=p_1^*\xi$ with
$\xi\in Z^2(G,k^\times)$.  The one-sided formula gives
\[
 [L_\alpha,p_1^*\xi]\triangleright[K,\nu]
 =\bigl[\alpha(K),\,
       \alpha_*[\nu]+\operatorname{res}^{G}_{\alpha(K)}[\xi]\bigr].
\]

Suppose that $[K,\nu]$ and $[K',\nu']$ lie in the same orbit.
For some $\alpha_0\in\Aut(G)$ and $[\xi]\in H^2(G,k^\times)$,
the pair in the displayed formula is $G$-conjugate to
$(K',[\nu'])$.  Thus there is $g\in G$ such that
$K'=c_g\alpha_0(K)$.  Put $\alpha=c_g\circ\alpha_0$.
Transport commutes with restriction, and inner automorphisms act
trivially on $H^2(G,k^\times)$, so
\[
 \begin{aligned}
 [\nu']
 &=(c_g)_*\bigl(\alpha_{0*}[\nu]
              +\operatorname{res}^{G}_{\alpha_0(K)}[\xi]\bigr)\\
 &=\alpha_*[\nu]+\operatorname{res}^{G}_{K'}[\xi].
 \end{aligned}
\]
This proves the required condition.

Conversely, suppose that $\alpha(K)=K'$ and
$[\nu']-\alpha_*[\nu]=\operatorname{res}^{G}_{K'}[\xi]$ for
some $[\xi]\in H^2(G,k^\times)$.  The decorated graph
$[L_\alpha,p_1^*\xi]$ belongs to $\BrPic(\Vec_G)$ by
Theorem~\ref{thm: invertibility criterion}, and the first displayed
formula shows that it sends $[K,\nu]$ to $[K',\nu']$.

The graph description also gives
$\BrPic(\Vec_G)\cong H^2(G,k^\times)\rtimes\Out(G)$;
see \cite[Corollary~7.8]{NR}.
\end{proof}

\begin{example}
\label{ex: S5 subgroup embedding orbits}
For $G=S_5$, the solvable radical is trivial and every automorphism is
inner.  The subgroups $K=\langle(12)\rangle$
and $K'=\langle(12)(34)\rangle$ are not conjugate, so $[K,1]$ and
$[K',1]$ lie in distinct Brauer--Picard orbits.  Yet both subgroups are
isomorphic to $C_2$ and have trivial quotients by their solvable
radicals.  Thus the image of the subgroup in the ambient quotient
distinguishes these two embeddings, while the abstract solvable-radical
quotient does not.
\end{example}

\begin{remark}
\label{rem: radical free orbit counts}
If $\Rad_{\mathrm{sol}}(G)=1$, the graph description gives
\[
 o(G)\geq
 \left\lceil
 \frac{|\Omega_G|}
 {|H^2(G,k^\times)|\,|\Out(G)|}
 \right\rceil,
\]
since every orbit has at most $|\BrPic(\Vec_G)|$ elements.
If also $H^2(G,k^\times)=0$ and $\Out(G)=1$, then
$\BrPic(\Vec_G)=1$ and every orbit is a singleton.
This occurs, for example, for the Mathieu group $M_{11}$ and the
Monster; see \cite[Remark~7.10]{NR}.
\end{remark}

For finite abelian $G$, put
\[
 \mathbb H(G)=G\oplus\widehat G,
 \qquad
 q(x,\chi)=\chi(x).
\]
The center $\Z(\Vec_G)$ is the pointed nondegenerate braided category with
simple objects $(x,\chi)\in\mathbb H(G)$.  We use the convention in which
the half-braiding of $(x,\chi)$ with $\delta_y$ is
$\chi(y)\id$.  Together with the convention
\[
 \mathcal M(K,\nu)
 =
 \operatorname{RMod}_{A(K,\nu^{-1})}(\Vec_G),
\]
this makes the inverse alternating bicharacter appear below.  Let
$O(\mathbb H(G),q)$ denote the group of automorphisms of $\mathbb H(G)$
preserving $q$.  The standard identifications give
\[
 \BrPic(\Vec_G)
 \cong
 \Aut^{\mathrm{br}}(\Z(\Vec_G))
 \cong
 O(\mathbb H(G),q);
\]
see \cite[Theorem~1.1 and Corollary~1.2]{ENO2}.

We use the identification sending an invertible $\Vec_G$-bimodule
category $\mathcal U$ to the braided autoequivalence $\Psi_{\mathcal U}$
characterized by
\[
 \Psi_{\mathcal U}(Z)\triangleright-
 \cong-\triangleleft Z
\]
as bimodule endofunctors of $\mathcal U$, naturally in
$Z\in\Z(\Vec_G)$.  Thus
$\Psi_{\mathcal U\boxtimes_{\Vec_G}\mathcal V}
 \cong\Psi_{\mathcal U}\circ\Psi_{\mathcal V}$;
this is the inverse transport to $\Phi_{\mathcal U}$ in
\cite[Equation~(12)]{NR}.

Let
$\operatorname{Lag}(\mathbb H(G),q)$ denote the
subgroups on which $q$ is trivial and whose order is $|G|$.

\begin{proposition}
\label{prop: Omega Lagrangian correspondence}
For finite abelian $G$, the formula
\begin{equation}
\label{eq: abelian Lagrangian support}
 \Lambda(K,\nu)
 =
 \{(x,\chi)\in G\oplus\widehat G:
   x\in K,\ \chi|_K=\Alt_\nu(x,-)^{-1}\}
\end{equation}
defines a $\BrPic(\Vec_G)$-equivariant bijection
\[
 \Omega_G\xrightarrow{\ \sim\ }
 \operatorname{Lag}(\mathbb H(G),q).
\]
\end{proposition}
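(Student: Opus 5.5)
We prove the proposition in three steps: $\Lambda(K,\nu)$ is Lagrangian, $\Lambda$ is a bijection, and $\Lambda$ is equivariant.

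\textbf{Step 1: $\Lambda(K,\nu)$ is Lagrangian.} Since $G$ is abelian, conjugacy classes of pairs are just pairs $(K,[\nu])$, and $\Alt_\nu$ depends only on $[\nu]$. Hence $\Lambda$ is well defined on $\Omega_G$.

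It is a subgroup because $\Alt_\nu$ is a bicharacter. To count it, note that the projection $\Lambda(K,\nu)\to K$ is surjective. Indeed, each character $\Alt_\nu(x,-)^{-1}$ of $K$ extends to $G$. The fibre over each $x$ is a coset of $K^\perp\cong\widehat{G/K}$. This gives $|\Lambda(K,\nu)|=|K|\,|G/K|=|G|$.

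Isotropy is the computation $q(x,\chi)=\chi(x)=\Alt_\nu(x,x)^{-1}=1$.

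\textbf{Step 2: $\Lambda$ is a bijection.} Injectivity comes from recovering the data. First, $K$ is the projection of $\Lambda$ to $G$. Second, $\Alt_\nu$ is recovered by restricting the fibre characters to $K$. For abelian $K$, the map $[\nu]\mapsto\Alt_\nu$ identifies $H^2(K,k^\times)$ with alternating bicharacters, so $[\nu]$ is recovered.

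For surjectivity, let $\Lambda'$ be any Lagrangian subgroup.
\begin{itemize}
\item Put $K=p_1(\Lambda')$. Isotropy of $\Lambda'$ forces $\Lambda'\cap(0\oplus\widehat G)\subseteq K^\perp$.
\item The order count $|\Lambda'|=|G|$ then forces this intersection to equal $K^\perp$.
\item Hence $\Lambda'$ is the graph of a homomorphism $\beta\colon K\to\widehat K$.
\item Isotropy gives $\beta(x)(x)=1$, so $\beta$ is alternating.
\item Any alternating bicharacter is $\Alt_\nu^{-1}$ for some class $[\nu]$, so $\Lambda'=\Lambda(K,\nu)$.
\end{itemize}
This is the standard parametrization, which we can cite from \cite{NN,ENO2}.

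\textbf{Step 3: equivariance.} This is the main obstacle. My plan is to avoid computing the action of an arbitrary $[L,\mu]$ directly, and instead identify $\Lambda(K,\nu)$ categorically.

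I claim that $\Lambda(K,\nu)$ is the set of simple $Z\in\Z(\Vec_G)$ acting trivially on $\mathcal M_a$. Equivalently, these are the simples $Z$ with $-\triangleleft Z\cong\mathrm{id}$ on the rank-one module category over the Lagrangian algebra $A_a$; equivalently again, these are the simple summands of the Lagrangian algebra $A_a$, which is multiplicity free in the pointed case. To verify the claim:
\begin{itemize}
\item The forgetful image of $A_a$ is $\bigoplus_{p}\uHom(p,p)$ by \eqref{eq: underlying Lagrangian algebra}. Its grading support is $K$, with multiplicity $|G/K|$.
\item The grade-$x$ summand of $A_a$ as a half-braided object is computed from the twisted algebra $A(K,\nu^{-1})$. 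Commuting $\delta_y$ past $u_x$ produces the scalar $\Alt_\nu(x,y)^{-1}$ for $y\in K$, which explains the inverse in the formula.
\end{itemize}

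Given the claim, equivariance follows from a standard fact: for an invertible $\mathcal U$, one has $\Psi_{\mathcal U}(A_{\mathcal M})\cong A_{\mathcal U\boxtimes_{\Vec_G}\mathcal M}$. This fact follows from the characterization $\Psi_{\mathcal U}(Z)\triangleright-\cong-\triangleleft Z$, because the central objects acting trivially transform accordingly. Since $\Psi_{\mathcal U}$ is a braided autoequivalence, it corresponds to an element of $O(\mathbb H(G),q)$ and carries the support of $A_a$ to the support of $A_{u\triangleright a}$. This gives $\Lambda(u\triangleright a)=\Psi_u(\Lambda(a))$.

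The delicate point is sign and inverse conventions; I would check them on $\mathcal M(1,1)=\Vec_G$, where $\Lambda=0\oplus\widehat G$.
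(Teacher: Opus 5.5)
Your proof is correct, but it is organized differently from the paper's. The paper's proof is essentially by citation. It imports the classification of \cite{NN}: Lagrangian subcategories with simple objects $(x,\chi)$, $x\in K$, $\chi|_K=B(x,-)$. It fixes the sign $B=\Alt_\nu^{-1}$ by the one-line centrality equation $u_xu_y=\chi(y)u_yu_x$ in $A(K,\nu^{-1})$. It gets bijectivity from pointedness of $\Z(\Vec_G)$, and it cites \cite{NR,ENO2} for equivariance.

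You instead prove the Lagrangian property and bijectivity by elementary counting in $G\oplus\widehat G$. You then characterize $\Lambda(a)$ intrinsically as the set of invertible $Z\in\Z(\Vec_G)$ whose module functor $Z\triangleright-$ on $\mathcal M_a$ is isomorphic to the identity. Equivariance then follows formally from the characterization $\Psi_{\mathcal U}(Z)\triangleright-\cong-\triangleleft Z$ that the paper itself fixes. So the conventions are matched automatically, rather than by asserting that the cited sources use compatible ones.

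The price is two standard inputs you should state explicitly:
\begin{itemize}
\item $A_{\mathcal M}$ is the image of $\mathbf 1$ under the right adjoint of $\Z(\mathcal C)\to\operatorname{Fun}_{\mathcal C}(\mathcal M,\mathcal M)$, $Z\mapsto Z\triangleright-$. Hence an invertible $Z$ is a summand of $A_{\mathcal M}$ exactly when its image is $\cong\mathbf 1$.
\item The balancing of $\mathcal U\boxtimes_{\Vec_G}\mathcal M$ identifies $(-\triangleleft Z)\boxtimes\mathrm{id}$ with $\mathrm{id}\boxtimes(Z\triangleright-)$ as module functors.
\end{itemize}
This controls only invertible summands, so it does not give $\Psi_{\mathcal U}(A_{\mathcal M})\cong A_{\mathcal U\boxtimes\mathcal M}$ in general; in the pointed case, however, supports are all you need.

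Three small points.
\begin{itemize}
\item Your second bullet in Step 3, about computing the half-braiding on the sum of internal endomorphisms, is vaguer than necessary. Your own trivial-action characterization gives the sign directly. With $A=A(K,\nu^{-1})$, the functor $(x,\chi)\triangleright-$ on $\operatorname{RMod}_A(\Vec_G)$ corresponds to the bimodule $\delta_x\otimes A$, with left action twisted by $\chi^{-1}$ through the half-braiding. This bimodule is isomorphic to $A$ iff $x\in K$ and $u_yu_{x^{-1}}=\chi(y)u_{x^{-1}}u_y$ for all $y\in K$, i.e.\ $\chi(y)=\Alt_\nu(x,y)^{-1}$. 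This is the same equation as the paper's.
\item Your proposed sanity check on $\mathcal M(1,1)$ cannot detect the inverse on $\Alt_\nu$, since $K=1$ there. The sign is visible only for nontrivially decorated $K$.
\item The middle ``equivalently'' of your claim, about a rank-one module category over $A_a$, is unclear as stated and is not needed; drop it.
\end{itemize}
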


\begin{proof}
For abelian $G$, every subgroup $K$ is normal and every cohomology class
on $K$ is $G$-invariant.  Thus the classification and module-category
correspondence of
\cite[Equation~(13) and Theorems~3.5--3.6]{NN}
apply to every $[K,\nu]\in\Omega_G$.  In the notation of that source, the
corresponding Lagrangian subcategory has simple objects $(x,\chi)$ with
$x\in K$ and $\chi|_K=B(x,-)$.

Our convention uses the algebra $A(K,\nu^{-1})$.  The resulting
bicharacter is therefore
$B=\Alt_{\nu^{-1}}=\Alt_\nu^{-1}$.  Equivalently, the sign can be checked
directly from the full-center centrality equation: on
$v_x\otimes e_y$ it gives
\[
 \nu(x,y)^{-1}=\chi(y)\nu(y,x)^{-1},
\]
and hence $\chi(y)=\Alt_\nu(x,y)^{-1}$.  This gives
\eqref{eq: abelian Lagrangian support}.  Since $\Z(\Vec_G)$ is pointed,
a Lagrangian subcategory is determined by its subgroup of invertible simple
objects, proving the bijection.  The algebra-object form of the full-center
correspondence is compatible with this support description by
\cite[Proposition~1.3]{DavydovSimmons}.

For equivariance of the module-category/Lagrangian correspondence
with this convention, see \cite[Remark~7.13]{NR} and
\cite[Section~5.1]{ENO2}.
\end{proof}

Proposition~\ref{prop: Omega Lagrangian correspondence}
therefore gives
\[
 \BrPic(\Vec_G)\backslash\Omega_G
 \cong
 O(\mathbb H(G),q)\backslash
 \operatorname{Lag}(\mathbb H(G),q).
\]

For $K\leq G$, let
$K^\perp=\{\chi\in\widehat G:\chi|_K=1\}$.
Then $\Lambda(K,1)=K\oplus K^\perp$.
Characters trivial on $K$ factor through $G/K$, so
$K^\perp\cong\widehat{G/K}$ and
$\Lambda(K,1)\cong K\oplus G/K$ as abstract groups.

\begin{proposition}
\label{prop: BrPic transitivity}
The action of $\BrPic(\Vec_G)$ on $\Omega_G$ is transitive if and
only if $G$ is abelian of square-free exponent.
\end{proposition}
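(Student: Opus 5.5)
We prove the two directions separately.

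\emph{Necessity.} Suppose the action is transitive. First we show that $G$ is abelian. The pair $[G,1]$ lies in the orbit of $[1,1]$. By Proposition~\ref{prop: BrPic solvable quotient action}, the image $\pi(K)$ has constant order on an orbit, so $\pi(G)=\overline G$ is trivial. Hence $G$ is solvable.

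A sharper invariant is needed to force abelian. We would use the rank $c_{a,a}=\operatorname{rk}\mathcal C_a$, which is constant on orbits because $\mathcal C_a\simeq\mathcal C_b$. By \eqref{eq: one sided composition matrix entries}, $c_{[1,1],[1,1]}=|G|$, while $c_{[G,1],[G,1]}=k(G)$, the number of conjugacy classes. Equality gives $k(G)=|G|$, so $G$ is abelian.

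For abelian $G$, Proposition~\ref{prop: Omega Lagrangian correspondence} turns transitivity into transitivity of $O(\mathbb H(G),q)$ on Lagrangians. In particular, all Lagrangians are then isomorphic as abstract groups. Now compare $\Lambda(1,1)=\widehat G\cong G$ with $\Lambda(K,1)\cong K\oplus G/K$ for every $K\leq G$.

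If some primary component contains $C_{p^n}$ with $n\geq2$, work in a cyclic factor $C_{p^n}=\langle g\rangle$ and take $K$ to be its subgroup of order $p$. Then the corresponding factor becomes $C_p\oplus C_{p^{n-1}}$, which is not isomorphic to $C_{p^n}$. This changes the isomorphism type, since the number of cyclic summands is counted by $\dim(-\otimes\mathbb F_p)$. Hence the exponent is square-free. This numerical comparison is the cleanest step.

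\emph{Sufficiency.} Let $G$ be abelian of square-free exponent. Then $G=\bigoplus_p E_p$ with each $E_p$ elementary abelian, and $\mathbb H(G)$ is the orthogonal sum of the $\mathbb H(E_p)$.

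Every subgroup of $\mathbb H(G)$ decomposes into its $p$-primary parts. A Lagrangian decomposes as a sum of Lagrangians in the $\mathbb H(E_p)$, and the isometry group is the product of the factor groups. It therefore suffices to treat $G=\mathbb F_p^k$. There $\mathbb H(G)$ is a $2k$-dimensional $\mathbb F_p$-space with nondegenerate quadratic form $q$, valued in $\mu_p$, and this form has Witt index $k$. Lagrangians are exactly the maximal totally singular subspaces; for $p=2$ this uses the quadratic form rather than just the bilinear form. Witt's extension theorem for nondegenerate quadratic spaces, valid also in characteristic $2$, says the orthogonal group acts transitively on maximal totally singular subspaces. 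This yields transitivity.

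\emph{Main obstacle.} The step we expect to be hardest is handling $p=2$ carefully. One must check that "$q$ trivial on the subgroup" matches total singularity for the quadratic form, and then invoke the quadratic Witt theorem rather than the symplectic one.

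Alternatively, one can avoid Witt's theorem. Choose a basis adapted to a Lagrangian $\Lambda$ and a complementary Lagrangian, which exists by hyperbolicity. Then construct an explicit isometry carrying $0\oplus\widehat G$ to $\Lambda$.
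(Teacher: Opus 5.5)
Your proof is correct. The square-free step is the paper's own argument: compare $\Lambda(1,1)\cong G$ with $\Lambda(K,1)\cong K\oplus G/K$. The paper takes $K=pC_{p^a}$ and you take the subgroup of order $p$; either choice changes the number of cyclic summands. The other two steps take different routes.

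\textbf{Abelianness.} The paper observes that $[L,\mu]\triangleright[1,1]=[k_1(L),\mu_1]$. So an invertible element carrying $[1,1]$ to $[G,1]$ must have $k_1(L)=G$, and Theorem~\ref{thm: invertibility criterion} forces this kernel to be abelian. You use the orbit invariant $c_{a,a}=\operatorname{rk}\mathcal C_a$ and compare $|G|$ with $k(G)$. That needs only \eqref{eq: one sided composition matrix entries} and \cite{MN}. Both arguments are one line. Your preliminary reduction to solvable $G$ via the solvable-radical homomorphism is superfluous.

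\textbf{Sufficiency.} The paper never leaves the decorated-biset model. Given $[K,\nu]$, it chooses a complement $H$ and a perfect bicharacter $b$ on $K$, and writes down $L=\{((x,h),(y,h))\}$ with $\mu\bigl((x,y,h),(x',y',h')\bigr)=\nu(x,x')\,b(x,y')$. It checks invertibility by Theorem~\ref{thm: invertibility criterion} and reads off $[L,\mu]\triangleright[1,1]=[K,\nu]$. That construction is explicit and produces the invertible bimodule category itself. It has no $p=2$ subtlety, and it uses square-free exponent only to obtain complements. It therefore shows more: $[K,\nu]$ lies in the orbit of $[1,1]$ whenever $K$ is a direct summand of an abelian $G$.

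Your route passes through Proposition~\ref{prop: Omega Lagrangian correspondence} and $\BrPic(\Vec_G)\cong O(\mathbb H(G),q)$, then applies Witt's theorem to the hyperbolic quadratic space over $\mathbb F_p$. This is conceptually clean and reduces the orbit question to a classical one. You correctly flag that for $p=2$ you need total singularity for the quadratic form and the quadratic version of Witt's theorem. The cost is more external input than the paper's direct construction.
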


\begin{proof}
For $u=[L,\mu]\in\BrPic(\Vec_G)$, put $A_1=k_1(L)$.
The one-sided formula with input $[1,1]$ has trivial overlap and gives
$u\triangleright[1,1]=[A_1,\mu_1]$, where
$\mu_1(a,a')=\mu((a,1),(a',1))$.
If the action is transitive, some such element takes $[1,1]$ to
$[G,1]$.  The invertibility criterion says that $A_1$ is abelian,
so $G$ is abelian.

If $G$ does not have square-free exponent, write
$G=C_{p^a}\oplus H$ for a prime $p$ and $a\geq2$, and take
$K=pC_{p^a}\leq G$.  Then
\[
 \Lambda(K,1)\cong C_{p^{a-1}}\oplus C_p\oplus H,
 \qquad
 \Lambda(1,1)\cong G=C_{p^a}\oplus H.
\]
These groups are not isomorphic: their $p$-primary components require
different numbers of generators.  Orthogonal automorphisms preserve
abstract group type, so Proposition~\ref{prop: Omega Lagrangian correspondence}
shows that the action is not transitive.

Conversely, suppose that $G$ is abelian of square-free exponent.
Every subgroup has a complement.  Given $[K,\nu]\in\Omega_G$,
choose a normalized representative $\nu$, write $G=K\times H$, and choose
a perfect bicharacter
$b:K\times K\to k^\times$.  On
\[
 L=\{((x,h),(y,h)):x,y\in K,\ h\in H\}\leq G\times G,
\]
write elements as $(x,y,h)$ and set
\[
 \mu\bigl((x,y,h),(x',y',h')\bigr)
 =\nu(x,x')\,b(x,y').
\]
This is a cocycle, being the product of a pulled-back cocycle and a
bicharacter.  The subgroup $L$ is subdirect, its two coordinate kernels
are $K$, and its mixed pairing is the perfect pairing $b$.
Thus $[L,\mu]\in\BrPic(\Vec_G)$ by
Theorem~\ref{thm: invertibility criterion}.
Its restriction to the first kernel is $\nu$, so the one-sided
formula gives $[L,\mu]\triangleright[1,1]=[K,\nu]$.
\end{proof}

\begin{remark}
\label{rem: S3 two orbits}
If $G$ is nonabelian, Proposition~\ref{prop: BrPic transitivity} gives
$o(G)\geq2$.  The smallest example is $G=S_3$, for which there are
exactly two orbits,
\[
 \{[1,1],[C_3,1]\},\qquad
 \{[C_2,1],[S_3,1]\}.
\]
Here $\BrPic(\Vec_{S_3})\cong C_2$ by \cite[Section~8.1]{NR},
and the two transpositions follow from
Corollary~\ref{thm: one-sided Clifford action}.
\end{remark}

\subsection{The abelian case: Lagrangian types}
\label{subsect: orbit abelian orthogonal model}

Assume throughout this subsection that $G$ is finite abelian.
If $G=\bigoplus_pG_p$ is its primary decomposition, then
$\mathbb H(G)=\bigoplus_p\mathbb H(G_p)$ orthogonally.  Every subgroup
and every automorphism preserving $q$ decomposes over the $p$-primary
components, and a subgroup of $\mathbb H(G)$ is Lagrangian if and only if
each of its $p$-primary components is Lagrangian.  Hence
$o(G)=\prod_p o(G_p)$.

\begin{proposition}
\label{prop: abelian orbit count lower bound}
Let $p$ be a prime and let
$G=\bigoplus_{i=1}^rC_{p^{a_i}}$, where $a_i\geq1$.  Then
\[
 o(G)\geq1+\sum_{i=1}^r\left\lfloor\frac{a_i}{2}\right\rfloor.
\]
\end{proposition}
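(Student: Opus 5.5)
The plan is to exhibit $1+\sum_i\lfloor a_i/2\rfloor$ elements of $\Omega_G$ whose associated Lagrangian subgroups have pairwise nonisomorphic abstract group types. By Proposition~\ref{prop: Omega Lagrangian correspondence}, $\BrPic(\Vec_G)$-orbits on $\Omega_G$ correspond to $O(\mathbb H(G),q)$-orbits on $\operatorname{Lag}(\mathbb H(G),q)$. Orthogonal automorphisms preserve abstract isomorphism type, so distinct types lie in distinct orbits. We only use undecorated elements $[K,1]$, for which $\Lambda(K,1)=K\oplus K^\perp\cong K\oplus G/K$, as noted after Proposition~\ref{prop: Omega Lagrangian correspondence}.

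For the construction, order the summands so that $a_1\leq a_2\leq\cdots\leq a_r$. Write $G=\bigoplus_iC_{p^{a_i}}$ with generators $e_i$. For integers $0\leq j_i\leq\lfloor a_i/2\rfloor$, put $K=\bigoplus_i p^{a_i-j_i}C_{p^{a_i}}\cong\bigoplus_iC_{p^{j_i}}$. Then $G/K\cong\bigoplus_iC_{p^{a_i-j_i}}$, and
\[
 \Lambda(K,1)\cong\bigoplus_{i=1}^r\bigl(C_{p^{j_i}}\oplus C_{p^{a_i-j_i}}\bigr).
\]
Now take a chain of choices: $K_0=0$, and increase the vector $(j_1,\dots,j_r)$ one unit at a time, filling coordinate $1$ up to $\lfloor a_1/2\rfloor$, then coordinate $2$, and so on. This yields $1+\sum_i\lfloor a_i/2\rfloor$ subgroups $K_0,K_1,\dots$.

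It remains to show that these types are pairwise distinct. The cleanest invariant is the exponent multiset, namely the partition $\bigcup_i\{j_i,a_i-j_i\}$ (dropping zeros). A natural numerical invariant that changes strictly along the chain is
\[
 \sum_{i}\bigl(\min(j_i,a_i-j_i)\bigr)^{\mathrm{something}},
\]
but it is simpler to use the number of cyclic factors of order at least $p^{m}$, or even just $|\Lambda[p]|$ together with $|p\Lambda|$. Since $j_i\leq a_i-j_i$, increasing $j_i$ by one moves mass from the larger part to the smaller part. This strictly decreases the partition in dominance order, because the two parts of each block become more balanced while the other blocks are unchanged. More precisely, use the invariant $\sum_{\text{parts }\lambda}\lambda^2$. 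Replacing $\{j,a-j\}$ by $\{j+1,a-j-1\}$ with $j+1\leq a-j-1$ or $j+1\leq\lfloor a/2\rfloor$ changes it by $2(2j+2-a)\leq0$, with equality only when $a=2j+2$. That equality case is the main obstacle. When $a$ is even and we take the last step to $j=a/2$, the square sum changes by zero. So instead I would use the sum of squares of the parts of the partition $\lambda$ conjugate to the original: $\sum_m(\#\{\text{parts}\geq m\})^2$, which is strictly monotone under dominance-order moves.

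In fact the move $\{j,a-j\}\to\{j+1,a-j-1\}$ with $j+1\leq a/2$ is always a genuine change of multiset, unless $j+1=a-j$, which would force $a$ odd and $j+1>a/2$, excluded. So each step is a strict decrease in dominance order. Dominance is a partial order, so a strictly decreasing chain has distinct members. Therefore all $1+\sum_i\lfloor a_i/2\rfloor$ abstract types are distinct. This gives that many distinct orbits, and hence the stated lower bound on $o(G)$.
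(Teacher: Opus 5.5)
Your argument is correct and follows the paper's route. You use undecorated representatives $[K,1]$ with $\Lambda(K,1)\cong K\oplus G/K$, a chain raising one coordinate at a time up to $\lfloor a_i/2\rfloor$, and the invariance of abstract type under $O(\mathbb H(G),q)$. Your $K$ has the type of the paper's $G/K_{\mathbf b}$, which gives the same Lagrangian type. The two proofs differ only in the distinctness step: you use dominance order, the paper uses the sum of squares of the cyclic exponents.

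Your reason for abandoning the sum of squares is an arithmetic slip. Replacing $\{j,a-j\}$ by $\{j+1,a-j-1\}$ changes the sum of squares by $(2j+1)-(2(a-j)-1)=2(2j+1-a)$, not $2(2j+2-a)$. This is at most $-2$ whenever $j+1\le\lfloor a/2\rfloor$; for example, $\{a/2-1,a/2+1\}\to\{a/2,a/2\}$ lowers it by $2$. So there is no equality case, and the paper simply uses this strictly decreasing quantity.

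Your dominance-order argument is still valid: a unit transfer from a part $x$ to a part $y$ with $x\ge y+2$ strictly lowers a partition in dominance order, so the chain is strictly decreasing. In a write-up, delete the claim that the square sum is unchanged at the last step. It is false, and it contradicts your own strict-dominance claim, since $\sum\lambda_i^2$ strictly decreases along any strict dominance decrease. Also drop the unused hedges (the placeholder exponent, and the suggestion that $|\Lambda[p]|$ and $|p\Lambda|$ suffice). Those two quantities only detect the number of parts, so they cannot separate $\{3,1\}$ from $\{2,2\}$ when $a_i=4$.
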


\begin{proof}
For $\mathbf b=(b_1,\ldots,b_r)$ with
$0\leq b_i\leq\lfloor a_i/2\rfloor$, put
$K_{\mathbf b}=\bigoplus_i p^{b_i}C_{p^{a_i}}\leq G$.
By \eqref{eq: abelian Lagrangian support}, with $\nu=1$,
$\Lambda(K_{\mathbf b},1)=K_{\mathbf b}\oplus K_{\mathbf b}^{\perp}$.
Since $K_{\mathbf b}^{\perp}\cong G/K_{\mathbf b}$,
\[
 \Lambda(K_{\mathbf b},1)
 \cong\bigoplus_i
 \left(C_{p^{a_i-b_i}}\oplus C_{p^{b_i}}\right),
\]
with trivial factors omitted.  Starting with all $b_i=0$, increase
one coordinate at a time until each reaches $\lfloor a_i/2\rfloor$.
At a step replacing $b_i$ by $b_i+1$, the sum of the squares of the
exponents in the cyclic decomposition decreases by
\[
 (a_i-b_i)^2+b_i^2
 -(a_i-b_i-1)^2-(b_i+1)^2
 =2(a_i-2b_i-1)>0.
\]
The resulting $1+\sum_i\lfloor a_i/2\rfloor$ groups therefore
have distinct abstract isomorphism types.  Since orthogonal
automorphisms preserve these types,
Proposition~\ref{prop: Omega Lagrangian correspondence} gives the bound.
\end{proof}

We now determine the possible abstract group types of Lagrangian
subgroups of $\mathbb H(G)$.
For a prime $p$ and a partition
$\lambda=(\lambda_1\geq\lambda_2\geq\cdots)$, put
$G_\lambda=\bigoplus_iC_{p^{\lambda_i}}$.  If $\rho$ and $\sigma$ are
partitions, let $\rho\sqcup\sigma$ be the decreasing rearrangement of the
combined multisets of parts.  Thus
$G_{\rho\sqcup\sigma}\cong G_\rho\oplus G_\sigma$.

For a finite abelian group $A$, write $\operatorname{type}(A)$ for its
isomorphism type.  If $A$ is a $p$-group, we identify
$\operatorname{type}(A)$ with the unique partition $\rho$ such that
$A\cong G_\rho$.

For partitions $\rho,\sigma,\gamma$, let
$c_{\rho,\sigma}^{\gamma}$ denote the Littlewood--Richardson coefficient,
that is, the coefficient of $s_\gamma$ in the Schur-function product
$s_\rho s_\sigma$; see
\cite[Section~3]{FultonEigenvalues}.  The Green--Klein theorem
\cite{Klein} says that
$c_{\rho,\sigma}^{\gamma}>0$ if and only if there is an exact sequence
\[
 0\longrightarrow G_\rho
 \longrightarrow G_\gamma
 \longrightarrow G_\sigma
 \longrightarrow0.
\]

We recall the precise form of the Horn criterion used below.  For an
$r$-element subset
$I=\{i_1<\cdots<i_r\}\subseteq\{1,\ldots,n\}$, put
\[
 I^\sharp=(i_r-r,\ldots,i_2-2,i_1-1),
\]
with zero parts omitted.  If $\rho,\sigma,\gamma$ are partitions with at
most $n$ parts, then $c_{\rho,\sigma}^{\gamma}>0$ if and only if
\[
 |\rho|+|\sigma|=|\gamma|
\]
and
\[
 \sum_{t\in T}\gamma_t
 \leq
 \sum_{i\in I}\rho_i+\sum_{j\in J}\sigma_j
\]
for every $1\leq r<n$ and all $r$-element subsets
$I,J,T\subseteq\{1,\ldots,n\}$ satisfying
\[
 c_{I^\sharp,J^\sharp}^{T^\sharp}>0.
\]
This is the Horn criterion for Littlewood--Richardson positivity
\cite[Theorems~11--12]{FultonEigenvalues}.

\begin{lemma}
\label{lem: odd even LR reduction}
Let $\lambda$ and $\mu$ be partitions such that
\[
 c_{\mu,\mu}^{\lambda\sqcup\lambda}>0.
\]
Choose $n$ so that $\lambda$ has at most $n$ parts and $\mu$ has at most
$2n$ parts, and append zeros so that
\[
 \lambda=(\lambda_1,\ldots,\lambda_n),
 \qquad
 \mu=(\mu_1,\ldots,\mu_{2n}).
\]
Put
\[
 \alpha=(\mu_1,\mu_3,\ldots,\mu_{2n-1}),
 \qquad
 \beta=(\mu_2,\mu_4,\ldots,\mu_{2n}).
\]
Then $\alpha$ and $\beta$ are partitions,
$\alpha\sqcup\beta=\mu$, and
\[
 c_{\alpha,\beta}^{\lambda}>0.
\]
\end{lemma}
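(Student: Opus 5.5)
The plan is to verify the Horn criterion for the triple $(\alpha,\beta,\lambda)$, deducing each required inequality from a Horn inequality for the triple $(\mu,\mu,\lambda\sqcup\lambda)$, which holds by hypothesis. For the needed Littlewood--Richardson positivity on index sets, the plan is to use the \emph{converse} doubling statement, proved through the Green--Klein theorem.

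\textbf{Preliminaries.} Since $\mu$ is weakly decreasing, $\mu_{2k-1}\geq\mu_{2k+1}$ and $\mu_{2k}\geq\mu_{2k+2}$. Hence $\alpha$ and $\beta$ are partitions with at most $n$ parts, and $\alpha\sqcup\beta=\mu$ is immediate. The hypothesis $c_{\mu,\mu}^{\lambda\sqcup\lambda}>0$ gives $2|\mu|=2|\lambda|$. Therefore $|\alpha|+|\beta|=|\mu|=|\lambda|$, which is the size condition of the Horn criterion for $c_{\alpha,\beta}^{\lambda}$.

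\textbf{The doubling lemma.} First I record the following. If $c_{\rho,\sigma}^{\gamma}>0$, then
\[
 c_{\rho\sqcup\sigma,\rho\sqcup\sigma}^{\gamma\sqcup\gamma}>0 .
\]
By Green--Klein there is an exact sequence $0\to G_\rho\to G_\gamma\to G_\sigma\to0$. By symmetry of Littlewood--Richardson coefficients there is also a sequence $0\to G_\sigma\to G_\gamma\to G_\rho\to0$. Their direct sum is an exact sequence $0\to G_{\rho\sqcup\sigma}\to G_{\gamma\sqcup\gamma}\to G_{\rho\sqcup\sigma}\to0$, and Green--Klein applied once more gives the claim.

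\textbf{Transferring the inequalities.} Fix $1\leq r<n$ and $r$-subsets $I,J,T\subseteq\{1,\dots,n\}$ with $c^{T^\sharp}_{I^\sharp,J^\sharp}>0$. We must show
\[
 \sum_{t\in T}\lambda_t\leq\sum_{i\in I}\mu_{2i-1}+\sum_{j\in J}\mu_{2j}.
\]
Work inside $\{1,\dots,2n\}$ and take the $2r$-subsets
\[
 T'=\{2t-1,2t:t\in T\},\qquad K=\{2i-1:i\in I\}\cup\{2j:j\in J\}.
\]
The set $K$ has exactly $2r$ elements, since odd and even entries cannot coincide. A direct computation gives $T'^\sharp=T^\sharp\sqcup T^\sharp$. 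Also $\sum_{T'}(\lambda\sqcup\lambda)=2\sum_T\lambda_t$ and $\sum_K\mu+\sum_K\mu$ is twice the desired right-hand side. So the Horn inequality for $(\mu,\mu,\lambda\sqcup\lambda)$ indexed by $(K,K,T')$ yields exactly what we need, provided
\[
 c^{T^\sharp\sqcup T^\sharp}_{K^\sharp,K^\sharp}>0 .
\]
Here $2r<2n$, so this is a legitimate Horn index triple. By the doubling lemma, it suffices to find partitions $\rho,\sigma$ with $\rho\sqcup\sigma=K^\sharp$ and $c^{T^\sharp}_{\rho,\sigma}>0$.

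\textbf{Main obstacle.} The hard step is relating $K^\sharp$ to $I^\sharp$ and $J^\sharp$. The naive guess $K^\sharp=I^\sharp\sqcup J^\sharp$ is false in general, because the rank of $2i_s-1$ in $K$ involves $\#\{j\in J:j<i_s\}$. I would first try to show that $K^\sharp$ dominates or contains, in the appropriate LR-monotone sense, a partition of the form $\rho\sqcup\sigma$ with $c^{T^\sharp}_{\rho,\sigma}>0$. Here $\rho$ and $\sigma$ are obtained from $I^\sharp$ and $J^\sharp$ by the shifts $\#\{j<i_s\}$ and $\#\{i\leq j_s\}$. If that fails, the fallback is a strong induction on $n$: apply the lemma itself to the smaller Horn triple $(K^\sharp,K^\sharp,T^\sharp\sqcup T^\sharp)$, with at most $r<n$ parts. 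Alternatively, one can replace $K$ by a better-adapted choice of two (possibly different) $2r$-subsets $K_1,K_2$ of odd/even positions, chosen so that the required positivity reduces to $c^{T^\sharp}_{I^\sharp,J^\sharp}>0$ via the doubling lemma. Once this index-set positivity is established, the Horn criterion gives $c^{\lambda}_{\alpha,\beta}>0$.
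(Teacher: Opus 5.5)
\textbf{There is a genuine gap.} Your reduction is the same as the paper's. Your $K$ and $T'$ are its $F$ and $D$, and you use the same Horn inequality for $(\mu,\mu,\lambda\sqcup\lambda)$ indexed by $(K,K,T')$; your preliminaries are also fine. The gap is the one step that carries the content, $c^{T'^\sharp}_{K^\sharp,K^\sharp}>0$. The paper takes this from \cite[Lemma~1.18]{FFLP}, and you do not establish it.

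Your route toward it also starts from a miscomputation. Write $T=\{t_1<\cdots<t_r\}$. The $(2s-1)$-st and $2s$-th elements of $T'$ are $2t_s-1$ and $2t_s$, and both contribute the part $2(t_s-s)$. So $T'^\sharp$ is obtained from $T^\sharp$ by replacing each box with a $2\times2$ block, and $|T'^\sharp|=4|T^\sharp|$. By contrast, $T^\sharp\sqcup T^\sharp$ has size $2|T^\sharp|$. A similar count gives $|K^\sharp|=2(|I^\sharp|+|J^\sharp|)=2|T^\sharp|$. Consequently the condition you reduce to, $c^{T^\sharp\sqcup T^\sharp}_{K^\sharp,K^\sharp}>0$, fails for degree reasons whenever $T^\sharp\neq\emptyset$. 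For the same reason you cannot write $K^\sharp=\rho\sqcup\sigma$ with $c^{T^\sharp}_{\rho,\sigma}>0$. For example, take $n=2$, $I=\{1\}$, $J=T=\{2\}$. Then $K=\{1,4\}$, $K^\sharp=(2)$ and $T'^\sharp=(2,2)$. Here $c^{(2,2)}_{(2),(2)}=1$, but $c^{(1,1)}_{(2),(2)}=0$.

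Your doubling lemma is correct. With the corrected value $T'^\sharp=\widetilde T\sqcup\widetilde T$, where $\widetilde T=(2(t_r-r),\ldots,2(t_1-1))$, it would reduce the problem to splitting $K^\sharp=\rho\sqcup\sigma$ with $c^{\widetilde T}_{\rho,\sigma}>0$. Nothing in the proposal derives such a splitting from $c^{T^\sharp}_{I^\sharp,J^\sharp}>0$. The shifted partitions you describe are only a candidate, and the appeal to domination ``in the appropriate LR-monotone sense'' is not an argument.

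The induction fallback is circular. Applying the lemma to $(K^\sharp,K^\sharp,T'^\sharp)$ requires as its hypothesis exactly the positivity $c^{T'^\sharp}_{K^\sharp,K^\sharp}>0$ you are trying to prove. Its conclusion produces a splitting from that positivity, not the positivity from $c^{T^\sharp}_{I^\sharp,J^\sharp}>0$.

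What you need is precisely the implication $c^{T^\sharp}_{I^\sharp,J^\sharp}>0\Rightarrow c^{T'^\sharp}_{K^\sharp,K^\sharp}>0$ for these odd/even index sets. Either cite \cite[Lemma~1.18]{FFLP} for it, as the paper does, or supply a proof.
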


\begin{proof}
Since $\mu$ is weakly decreasing, so are $\alpha$ and $\beta$, and their
combined multiset of parts is the multiset of parts of $\mu$.  Hence
$\alpha\sqcup\beta=\mu$.  Since Schur functions are homogeneous, the
hypothesis gives
\[
 2|\mu|
 =
 |\lambda\sqcup\lambda|
 =
 2|\lambda|.
\]
Consequently,
\[
 |\alpha|+|\beta|=|\mu|=|\lambda|.
\]

Fix $1\leq r<n$ and $r$-element subsets
\[
 I=\{i_1<\cdots<i_r\},\qquad
 J=\{j_1<\cdots<j_r\},\qquad
 T=\{t_1<\cdots<t_r\}
\]
of $\{1,\ldots,n\}$ such that
$c_{I^\sharp,J^\sharp}^{T^\sharp}>0$.  Define
\[
 F=
 \{2i-1:i\in I\}\cup\{2j:j\in J\},
 \qquad
 D=
 \{2t-1:t\in T\}\cup\{2t:t\in T\}.
\]
By \cite[Lemma~1.18]{FFLP},
\[
 c_{F^\sharp,F^\sharp}^{D^\sharp}>0.
\]
Since $2r<2n$, the corresponding Horn inequality for
$c_{\mu,\mu}^{\lambda\sqcup\lambda}>0$ is
\[
 \sum_{d\in D}(\lambda\sqcup\lambda)_d
 \leq
 \sum_{f\in F}\mu_f+\sum_{f\in F}\mu_f.
\]
Using
\[
 (\lambda\sqcup\lambda)_{2t-1}
 =
 (\lambda\sqcup\lambda)_{2t}
 =
 \lambda_t,
 \qquad
 \mu_{2i-1}=\alpha_i,
 \qquad
 \mu_{2j}=\beta_j,
\]
the preceding Horn inequality is equivalent to
\[
 \sum_{t\in T}\lambda_t
 \leq
 \sum_{i\in I}\alpha_i+\sum_{j\in J}\beta_j.
\]
Since these subsets were arbitrary and the required size equality has
already been verified, the Horn criterion gives
$c_{\alpha,\beta}^{\lambda}>0$.
\end{proof}

\begin{theorem}
\label{thm: possible Lagrangian types}
Let $G$ be a finite abelian group.  The abstract group types of Lagrangian
subgroups of $\mathbb H(G)$ are precisely
\[
 \{\operatorname{type}(K\oplus G/K):K\leq G\}.
\]
If $G=G_\lambda$ is a $p$-group, this set is identified with
\[
 \{\alpha\sqcup\beta:c_{\alpha,\beta}^{\lambda}>0\}.
\]
\end{theorem}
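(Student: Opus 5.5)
The strategy is to reduce to $p$-groups and then prove two inclusions. Since $\mathbb H(G)=\bigoplus_p\mathbb H(G_p)$ orthogonally and Lagrangians decompose componentwise, a Lagrangian $\Lambda$ has type $\bigoplus_p\operatorname{type}(\Lambda_p)$. Likewise $K\oplus G/K$ decomposes over primes with $K=\bigoplus K_p$. It therefore suffices to treat $G=G_\lambda$ a $p$-group. For the inclusion $\supseteq$, Proposition~\ref{prop: Omega Lagrangian correspondence} gives the Lagrangian $\Lambda(K,1)=K\oplus K^\perp\cong K\oplus G/K$ for every $K\leq G$. By Green--Klein, the types $\operatorname{type}(K\oplus G/K)$ for $K\le G$ are exactly $\alpha\sqcup\beta$ with $c^\lambda_{\alpha,\beta}>0$: an exact sequence $0\to G_\alpha\to G_\lambda\to G_\beta\to0$ is the same as a subgroup $K\cong G_\alpha$ with quotient $G_\beta$. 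This also proves that the two descriptions in the statement agree.

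For the inclusion $\subseteq$, let $\Lambda\le\mathbb H(G)$ be Lagrangian of type $\mu$. The plan is to produce an exact sequence $0\to G_\mu\to G_\lambda\oplus G_\lambda\to G_\mu\to 0$, i.e.\ $c^{\lambda\sqcup\lambda}_{\mu,\mu}>0$, and then apply Lemma~\ref{lem: odd even LR reduction}. That lemma yields $\alpha,\beta$ with $\alpha\sqcup\beta=\mu$ and $c^\lambda_{\alpha,\beta}>0$, so $\mu$ lies in the set, as required.

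The exact sequence comes from the form. Because $\Lambda$ is Lagrangian, $\Lambda^\perp=\Lambda$ for the bicharacter $b$ associated with $q$; here one uses that $\mathbb H(G)$ is a metric group, that $q|_\Lambda=1$ forces $b|_{\Lambda\times\Lambda}=1$, and that $|\Lambda|^2=|\mathbb H(G)|$. Nondegeneracy of $b$ then gives $\mathbb H(G)/\Lambda=\mathbb H(G)/\Lambda^\perp\cong\widehat\Lambda\cong\Lambda$. We obtain $0\to\Lambda\to G\oplus\widehat G\to\widehat\Lambda\to0$, and since $G\oplus\widehat G\cong G_{\lambda\sqcup\lambda}$ and $\widehat\Lambda\cong G_\mu$, Green--Klein gives $c^{\lambda\sqcup\lambda}_{\mu,\mu}>0$.

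The main obstacle is largely absorbed by Lemma~\ref{lem: odd even LR reduction}. What remains delicate is to check its hypotheses: $\lambda$ must be padded to $n$ parts and $\mu$ to $2n$ parts. Here $\mu$ has at most $2\ell(\lambda)$ parts, because $\Lambda$ embeds in $G_{\lambda\sqcup\lambda}$, whose $p$-rank is $2\ell(\lambda)$; so any $n\ge\ell(\lambda)$ works. The other point to watch is the case $p=2$, where the step from $q|_\Lambda=1$ to $\Lambda=\Lambda^\perp$ must be justified for the quadratic form rather than merely for the bicharacter. Since $q$ here is the hyperbolic form, its associated bicharacter is nondegenerate and vanishes on $\Lambda$, and the order count then gives equality.
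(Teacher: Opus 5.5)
Your proposal is correct and follows essentially the same route as the paper. You reduce to $p$-groups, obtain one inclusion from $\Lambda(K,1)\cong K\oplus G/K$, and obtain the other from $\mathbb H(G)/\Lambda\cong\widehat\Lambda$ and Green--Klein, which give $c^{\lambda\sqcup\lambda}_{\mu,\mu}>0$, followed by Lemma~\ref{lem: odd even LR reduction} and Green--Klein again. Your extra checks are sound and only make explicit what the paper leaves implicit: the padding bound $\ell(\mu)\le 2\ell(\lambda)$, $\Lambda=\Lambda^\perp$ for $p=2$ via the nondegenerate hyperbolic bicharacter, and the identification of the two descriptions of the set.
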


\begin{proof}
By primary decomposition, it suffices to prove the theorem for
$G=G_\lambda$, a finite abelian $p$-group.

For $K\leq G$, the observation above gives, noncanonically as abstract
finite abelian groups, $\Lambda(K,1)\cong K\oplus G/K$.  Thus every type
in the stated set occurs.

Conversely, let $\Lambda\leq\mathbb H(G)$ be Lagrangian and let
$\mu=\operatorname{type}(\Lambda)$.  Since $\Lambda$ is Lagrangian,
the bicharacter associated to the hyperbolic form induces an isomorphism
\[
 \mathbb H(G)/\Lambda\xrightarrow{\ \sim\ }\widehat\Lambda.
\]
A finite abelian group and its character group have the same partition
type, so there is an exact sequence
\[
 0\longrightarrow\Lambda
 \longrightarrow\mathbb H(G_\lambda)
 \longrightarrow\widehat\Lambda
 \longrightarrow0.
\]
The three terms have types $\mu$, $\lambda\sqcup\lambda$, and $\mu$,
respectively.  By the Green--Klein theorem
\cite{Klein},
\begin{equation}
\label{eq: doubled LR positivity}
 c_{\mu,\mu}^{\lambda\sqcup\lambda}>0.
\end{equation}

Lemma~\ref{lem: odd even LR reduction} gives partitions
$\alpha$ and $\beta$ such that
\[
 \alpha\sqcup\beta=\mu,
 \qquad
 c_{\alpha,\beta}^{\lambda}>0.
\]
By Green--Klein, there is a subgroup $K\leq G_\lambda$ such that
\[
 \operatorname{type}(K)=\alpha,
 \qquad
 \operatorname{type}(G_\lambda/K)=\beta.
\]
Therefore
\[
 \operatorname{type}(K\oplus G/K)
 =
 \alpha\sqcup\beta
 =
 \mu
 =
 \operatorname{type}(\Lambda),
\]
which proves the reverse inclusion.
\end{proof}

\begin{corollary}
\label{cor: Lagrangian subcategory types}
The symmetric equivalence types of Lagrangian subcategories of
$\Z(\Vec_G)$ are precisely
\[
 \Rep(K\oplus G/K),
 \qquad
 K\leq G.
\]
\end{corollary}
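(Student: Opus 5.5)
The plan is to deduce the corollary directly from Theorem~\ref{thm: possible Lagrangian types}, using the fact that $\Z(\Vec_G)$ is pointed. I will first recall that for finite abelian $G$ the simple objects of $\Z(\Vec_G)$ are the pairs $(x,\chi)\in\mathbb H(G)$, with quadratic form $q(x,\chi)=\chi(x)$. As in the proof of Proposition~\ref{prop: Omega Lagrangian correspondence}, a fusion subcategory of a pointed braided category is determined by its group of simple objects. In particular, Lagrangian subcategories of $\Z(\Vec_G)$ correspond bijectively to the subgroups $\Lambda\in\operatorname{Lag}(\mathbb H(G),q)$: the subcategory is Tannakian exactly when $q|_\Lambda=1$, and its Frobenius--Perron dimension is $|\Lambda|=|G|=\sqrt{\operatorname{FPdim}\Z(\Vec_G)}$.

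Next I will identify the symmetric equivalence type of the Lagrangian subcategory $\mathcal L_\Lambda$ with $\Rep(\widehat\Lambda)$. The subcategory $\mathcal L_\Lambda$ is pointed with group of simples $\Lambda$. Its braiding restricts to a symmetric braiding whose twist is $q|_\Lambda=1$, so it is Tannakian. A pointed Tannakian category with group of invertibles $\Lambda$ is $\Rep(\widehat\Lambda)$ with its standard symmetric structure. The associator class and symmetric braiding are determined by the pre-metric group $(\Lambda,q|_\Lambda)$ by the standard classification of pointed braided categories via quadratic forms (Joyal--Street). Since $\widehat\Lambda\cong\Lambda$ noncanonically for finite abelian groups, the type is $\Rep(\Lambda)$. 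Thus two Lagrangian subcategories are symmetrically equivalent if and only if their supports are isomorphic as abstract groups.

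Finally, Theorem~\ref{thm: possible Lagrangian types} says that the abstract types of Lagrangian subgroups are exactly $\operatorname{type}(K\oplus G/K)$ for $K\leq G$. Combining this with the previous step gives precisely the set $\Rep(K\oplus G/K)$, $K\leq G$. Each such type is realized concretely by $\Lambda(K,1)=K\oplus K^\perp$.

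The only step requiring care is the middle one, namely that symmetric equivalence classes of pointed Tannakian categories are classified by the abstract group of simple objects. If a citation is preferred, I would note that a braided pointed category is determined by its metric group $(\Lambda,q|_\Lambda)$. When $q|_\Lambda$ is trivial this data reduces to the abstract group, so isomorphic groups give symmetrically equivalent categories; the converse holds because equivalent categories have isomorphic groups of simple objects.
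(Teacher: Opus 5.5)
Your proposal is correct and follows the paper's argument. The paper likewise identifies each Lagrangian subcategory with the pointed Tannakian category $\Rep(\widehat\Lambda)$, uses $\widehat\Lambda\cong\Lambda$, and invokes Theorem~\ref{thm: possible Lagrangian types}; you add an explicit justification, via the classification of pointed braided categories by pre-metric groups, that the symmetric equivalence type is exactly the abstract isomorphism type of $\Lambda$, a point the paper leaves implicit.
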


\begin{proof}
A Lagrangian subgroup $\Lambda$ determines a symmetric pointed subcategory
equivalent to $\Rep(\widehat\Lambda)$.  Since
$\widehat\Lambda\cong\Lambda$ abstractly, the assertion follows from
Theorem~\ref{thm: possible Lagrangian types}.
\end{proof}

In the classification of \cite{NN}, the Tannakian group $G'$ may be a
nonsplit extension
\[
 0\longrightarrow\widehat H
 \longrightarrow G'
 \longrightarrow G/H
 \longrightarrow0
\]
for a subgroup $H\leq G$.  Theorem~\ref{thm: possible Lagrangian types}
shows nevertheless that, for some $K\leq G$,
$G'\cong K\oplus G/K$ as an abstract group.

\subsection{Homocyclic groups and mixed exponents}
\label{subsect: orbit homocyclic mixed}

For odd $p$, Mason and Ng proved an important special case of the orbit
statement below.  Their Theorem~13.7(ii) implies that, in a fixed metabolic
form, metabolizers isomorphic to $C_{p^n}^k$ form a single orbit under the
isometry group \cite[Theorem~13.7(ii)]{MasonNg}.  In the terminology of
this paper, these metabolizers are Lagrangian subgroups.
Theorem~\ref{thm: homocyclic Lagrangian orbits} classifies all Lagrangian
subgroups of the hyperbolic form associated to $C_{p^n}^k$: their abstract
group type is a complete orbit invariant.  It also applies for $p=2$.
The finite-local-ring orthogonal classification used in the proof is due
to Zimmermann
\cite[Theorem~2.4.4 and Corollaries~2.4.7, 2.4.9]{Zimmermann}.

Fix a prime $p$, put $R_n=\mathbb Z/p^n\mathbb Z$, and let
$G=R_n^k\cong C_{p^n}^k$.  After choosing a compatible primitive
$p^n$-th root of unity, identify
$\mathbb H(G)\cong R_n^k\oplus R_n^k$ with the standard hyperbolic
quadratic module.  Choose a hyperbolic basis
$e_1,\ldots,e_k,f_1,\ldots,f_k$ and write its $R_n$-valued quadratic form
as
\[
 Q_n\left(\sum_i x_ie_i+y_if_i\right)=\sum_i x_iy_i.
\]
For a nondecreasing tuple
$0\leq a_1\leq\cdots\leq a_k\leq\lfloor n/2\rfloor$, define
\begin{equation}
\label{eq: homocyclic normal Lagrangian}
 \Lambda_{\mathbf a}
 =
 \bigoplus_{i=1}^k
 \left(
 p^{a_i}R_ne_i
 \oplus
 p^{n-a_i}R_nf_i
 \right),
\end{equation}
where $p^nR_n=0$.

\begin{theorem}
\label{thm: homocyclic Lagrangian orbits}
Every Lagrangian subgroup of $\mathbb H(C_{p^n}^k)$ is orthogonally
conjugate to a unique $\Lambda_{\mathbf a}$.  Consequently,
\begin{equation}
\label{eq: homocyclic orbit parametrization}
 \BrPic(\Vec_{C_{p^n}^k})\backslash\Omega_{C_{p^n}^k}
 \cong
 \left\{
 (a_1,\ldots,a_k):
 0\leq a_1\leq\cdots\leq a_k
 \leq\left\lfloor\frac n2\right\rfloor
 \right\}.
\end{equation}
Equivalently, the abstract group type of the associated Lagrangian subgroup
is a complete orbit invariant.
\end{theorem}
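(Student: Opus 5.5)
The plan is to reduce the statement, via Proposition~\ref{prop: Omega Lagrangian correspondence}, to a classification of Lagrangian submodules of the hyperbolic quadratic module $(R_n^{2k},Q_n)$ up to the orthogonal group. The orbit bijection \eqref{eq: homocyclic orbit parametrization} and the final sentence of the theorem both follow from the first assertion. The identification $\BrPic(\Vec_G)\cong O(\mathbb H(G),q)$ and the $\BrPic$-equivariant bijection $\Omega_G\to\operatorname{Lag}(\mathbb H(G),q)$ are already available. We also need that, after choosing the root of unity, subgroups of $\mathbb H(G)$ are exactly the $R_n$-submodules of $R_n^{2k}$, and that $q$-isometries are exactly $Q_n$-isometries. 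Both are immediate because $q=\zeta^{Q_n}$ with $\zeta$ primitive of order $p^n$.

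Uniqueness is the easy half. Since $a_i\le\lfloor n/2\rfloor$, we have
\[
 \Lambda_{\mathbf a}\cong\bigoplus_i\bigl(C_{p^{n-a_i}}\oplus C_{p^{a_i}}\bigr),
\]
and $n-a_i\ge a_i$. Hence the partition type of $\Lambda_{\mathbf a}$ is the multiset $\{n-a_i,a_i\}$, and from it we recover the nondecreasing tuple $\mathbf a$: the $k$ smallest parts (counted with multiplicity, zeros included) are exactly the $a_i$. Orthogonal maps preserve abstract type, so distinct tuples give nonconjugate Lagrangians. We also check directly that $\Lambda_{\mathbf a}$ is Lagrangian. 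It is isotropic because $Q_n(p^{a_i}xe_i+p^{n-a_i}yf_i)=p^nxy=0$, and its order is $\prod_i p^{n-a_i}p^{a_i}=p^{nk}=|G|$.

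Existence is the substantive part, and it is where Zimmermann's classification \cite{Zimmermann} enters. Given a Lagrangian $\Lambda$, I would first use the elementary divisor theorem over the local ring $R_n$ together with the form. Choose a basis of $\Lambda$ adapted to its invariant factors, and set $V_0=\{v:p^{n-1}v\in p^{n-1}\Lambda\}$-type filtrations. The aim is to build, inductively on $k$, a hyperbolic pair $(e,f)$ splitting off orthogonally with $\Lambda=(\Lambda\cap\langle e,f\rangle)\oplus(\Lambda\cap\langle e,f\rangle^\perp)$. This is where the cited results, Theorem~2.4.4 and Corollaries~2.4.7 and~2.4.9, should be invoked. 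They assert that a submodule of a nondegenerate (quadratic) hyperbolic $R_n$-module is determined up to isometry by suitable invariants, and that isometries between such submodules extend (a Witt-type extension). With extension in hand, it suffices to produce an abstract isometric isomorphism $\Lambda\cong\Lambda_{\mathbf a}$ of submodules with their induced (zero) forms that also respects the relevant relative invariants, for example the types of $\Lambda\cap p^jV$ and $\Lambda+p^jV$ as $j$ varies.

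Let $\mu=\operatorname{type}(\Lambda)$. The Lagrangian exact sequence from the proof of Theorem~\ref{thm: possible Lagrangian types}, namely $0\to\Lambda\to R_n^{2k}\to\widehat\Lambda\to0$ with middle type $(n^{2k})$, forces the parts of $\mu$ to pair off as $\{n-a_i,a_i\}$. For the homocyclic case this can be checked directly: $\Lambda\cap p^{n-j}V$ and the quotients are controlled by self-duality. This determines $\mathbf a$, so a candidate $\Lambda_{\mathbf a}$ exists of the same type.

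The main obstacle is matching the hypotheses of Zimmermann's theorem at $p=2$. There the quadratic refinement $Q_n$, not merely the bilinear form, must be preserved. I expect to need Corollary~2.4.9 for the even case and to verify that isotropy for $Q_n$, rather than only for the bilinear form, is what defines $\operatorname{Lag}$. It is: the definition says $q$ is trivial on $\Lambda$. If a direct invocation fails, the fallback is a hands-on induction. Pick $v\in\Lambda$ of maximal order $p^{n-a_1}$. Find $w\in V$ with $b(v,w)=p^{a_1}$, normalize so that $(p^{-a_1}v',w')$ forms a hyperbolic pair with $v=p^{a_1}e$, split off $\langle e,f\rangle$, and check that $\Lambda$ decomposes along the splitting.
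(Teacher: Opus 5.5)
Your proposal follows essentially the same route as the paper. You transfer the problem to $\operatorname{Lag}(\mathbb H(G),q)$ via Proposition~\ref{prop: Omega Lagrangian correspondence}, take existence and transitivity from Zimmermann's classification, and get uniqueness because the type $\bigoplus_i(C_{p^{n-a_i}}\oplus C_{p^{a_i}})$ recovers $\mathbf a$. Your isotropy, order, and type-pairing checks are correct.

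One caution on how you phrase the citation. What you need from \cite{Zimmermann} is that a maximal totally isotropic submodule of the free hyperbolic $R_n$-module splits, in some hyperbolic basis, into blocks $p^{a_i}R_ne_i\oplus p^{n-a_i}R_nf_i$; equivalently, the orthogonal group is transitive on Lagrangians of fixed type. It is not a Witt-type extension for isomorphisms between arbitrary submodules, and such an extension fails over $R_n$. For example, in $\mathbb H(C_{p^2})$ the isotropic subgroups $\langle pe\rangle$ and $\langle p(e+f)\rangle$ have the same type and both lie in $p\,\mathbb H(C_{p^2})$. Yet no isometry maps one to the other: it would send the isotropic vector $e$ to some $c(e+f)+pu$ with $p\nmid c$, whose $Q_2$-value is $\equiv c^2\not\equiv0\pmod p$.
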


\begin{proof}
The ring $R_n$ is a finite local principal ideal ring with maximal ideal
$(p)$ and nilpotency index $n$, and the ambient module is a free regular
hyperbolic quadratic $R_n$-module of rank $2k$.  Under the chosen
identification, a Lagrangian subgroup is a maximal totally isotropic
$R_n$-submodule, and Zimmermann's block $H_{e_i,f_i}(a_i)$ is precisely
$p^{a_i}R_ne_i\oplus p^{n-a_i}R_nf_i$.  The decomposition, the
nondecreasing parameter range, and transitivity of
the full orthogonal group on submodules of fixed type now follow from
\cite[Theorem~2.4.4 and Corollaries~2.4.7, 2.4.9]{Zimmermann}.

The abstract group type is
\[
 \Lambda_{\mathbf a}
 \cong
 \bigoplus_{i=1}^k
 \left(C_{p^{n-a_i}}\oplus C_{p^{a_i}}\right),
\]
with $C_{p^0}$ omitted.  Since $0\leq a_i\leq n/2$, this type recovers the
multiset $\{a_i\}$ and hence the nondecreasing tuple.  The equivariant
bijection of Proposition~\ref{prop: Omega Lagrangian correspondence}
translates the orthogonal classification into
\eqref{eq: homocyclic orbit parametrization}.
\end{proof}

\begin{corollary}
\label{cor: homocyclic orbit count}
One has
\[
 \#\bigl(
 \BrPic(\Vec_{C_{p^n}^k})\backslash\Omega_{C_{p^n}^k}
 \bigr)
 =
 \binom{k+\lfloor n/2\rfloor}{k}.
\]
\end{corollary}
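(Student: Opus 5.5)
By Theorem~\ref{thm: homocyclic Lagrangian orbits}, the orbits are in bijection with nondecreasing tuples $0\leq a_1\leq\cdots\leq a_k\leq m$, where $m=\lfloor n/2\rfloor$. The plan is therefore to reduce the corollary to counting these tuples and then to apply a standard stars-and-bars identity.

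A nondecreasing tuple of length $k$ with entries in $\{0,1,\ldots,m\}$ is the same as a multiset of size $k$ drawn from a set of $m+1$ elements. The multiset determines the tuple by sorting, and conversely. The number of such multisets is
\[
 \binom{(m+1)+k-1}{k}=\binom{k+m}{k}.
\]
To make the bijection explicit, I would send $(a_1,\ldots,a_k)$ to the strictly increasing tuple $(a_1+1,a_2+2,\ldots,a_k+k)$. This gives a $k$-element subset of $\{1,\ldots,m+k\}$, and the map $b_i\mapsto b_i-i$ inverts it.

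Substituting $m=\lfloor n/2\rfloor$ gives the stated count. There is no real obstacle here; the only point to check is that the parameter range in Theorem~\ref{thm: homocyclic Lagrangian orbits} includes both endpoints $0$ and $\lfloor n/2\rfloor$, and it does.

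As sanity checks, $k=1$ gives $\lfloor n/2\rfloor+1$ orbits, which agrees with the bound in Proposition~\ref{prop: abelian orbit count lower bound}. The case $n=1$ gives a single orbit, which agrees with transitivity for square-free exponent (Proposition~\ref{prop: BrPic transitivity}).
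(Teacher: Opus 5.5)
Your proposal is correct and follows the paper's proof: both read off from Theorem~\ref{thm: homocyclic Lagrangian orbits} that the orbits correspond to nondecreasing tuples in $\{0,\ldots,\lfloor n/2\rfloor\}^k$, and both count these as multisets of size $k$ from $\lfloor n/2\rfloor+1$ values. Your explicit shift bijection $a_i\mapsto a_i+i$ and the sanity checks against Propositions~\ref{prop: abelian orbit count lower bound} and~\ref{prop: BrPic transitivity} are accurate additions.
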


\begin{proof}
This is the number of multisets of size $k$ chosen from
$\lfloor n/2\rfloor+1$ possible values.
\end{proof}

\begin{corollary}
\label{cor: homocyclic undecorated representatives}
Every $\BrPic(\Vec_{C_{p^n}^k})$-orbit in $\Omega_{C_{p^n}^k}$ contains an
undecorated representative.
\end{corollary}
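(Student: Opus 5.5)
The plan is to combine Theorem~\ref{thm: homocyclic Lagrangian orbits} with the explicit undecorated Lagrangians $\Lambda(K,1)=K\oplus K^\perp$ and the equivariant bijection of Proposition~\ref{prop: Omega Lagrangian correspondence}. By that theorem, each orbit corresponds to a unique nondecreasing tuple $\mathbf a$ with $0\le a_i\le\lfloor n/2\rfloor$, and the abstract type of the Lagrangian is a complete orbit invariant. It therefore suffices to produce, for each such $\mathbf a$, a subgroup $K\le G=C_{p^n}^k$ such that $\Lambda(K,1)$ has the same abstract type as $\Lambda_{\mathbf a}$, namely $\bigoplus_i\bigl(C_{p^{n-a_i}}\oplus C_{p^{a_i}}\bigr)$.

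Take $K_{\mathbf a}=\bigoplus_i p^{a_i}C_{p^n}\le G$, as in the proof of Proposition~\ref{prop: abelian orbit count lower bound}. Then $K_{\mathbf a}\cong\bigoplus_iC_{p^{n-a_i}}$ and $G/K_{\mathbf a}\cong\bigoplus_iC_{p^{a_i}}$. Since $\Lambda(K,1)=K\oplus K^\perp$ and $K^\perp\cong\widehat{G/K}\cong G/K$, we get
\[
 \Lambda(K_{\mathbf a},1)\cong\bigoplus_i\bigl(C_{p^{n-a_i}}\oplus C_{p^{a_i}}\bigr)
 \cong\Lambda_{\mathbf a}.
\]
Completeness of the type invariant now places $[K_{\mathbf a},1]$ in the orbit indexed by $\mathbf a$.

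One could instead check directly that, under the identification $\mathbb H(G)\cong R_n^k\oplus R_n^k$ with $e_i$ in $G$ and $f_i$ in $\widehat G$, the subgroup $\Lambda_{\mathbf a}$ literally equals $K_{\mathbf a}\oplus K_{\mathbf a}^\perp$. This would avoid invoking type-completeness, but it depends on the chosen identification, so the type-based argument is cleaner.

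The only point needing care is the abstract-type identification: the omitted factors $C_{p^0}$ must be handled correctly, and the multiset $\{a_i\}$ must be recovered from the type. Both are already handled in the proof of Theorem~\ref{thm: homocyclic Lagrangian orbits}, so no real obstacle is expected.
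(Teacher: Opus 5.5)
Your argument is correct, but the paper proves the corollary by the direct check you set aside. It takes the hyperbolic basis with the $e_i$ generating $G$ and the $f_i$ the dual characters in $\widehat G$, and puts $K_{\mathbf a}=\bigoplus_i p^{a_i}R_ne_i$. It then observes $K_{\mathbf a}^\perp=\bigoplus_i p^{n-a_i}R_nf_i$, so that $\Lambda(K_{\mathbf a},1)=\Lambda_{\mathbf a}$ on the nose.

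That route needs only the normal-form half of Theorem~\ref{thm: homocyclic Lagrangian orbits}: every Lagrangian is conjugate to some $\Lambda_{\mathbf a}$. Yours needs in addition the completeness half: Lagrangians of the same abstract type are orthogonally conjugate, which is Zimmermann's transitivity on submodules of fixed type. In exchange, you only need the abstract isomorphism $\Lambda(K_{\mathbf a},1)\cong K_{\mathbf a}\oplus G/K_{\mathbf a}$.

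Your concern about dependence on the identification is not a real obstacle. The normal form holds for any hyperbolic basis, so you may choose one adapted to $G\oplus\widehat G$, and then the direct verification is a one-line computation.

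Your type-based approach does have one structural advantage. Combined with Theorem~\ref{thm: possible Lagrangian types}, completeness of the type invariant alone shows that every orbit contains some $[K,1]$. Any Lagrangian has the type of $K\oplus G/K\cong\Lambda(K,1)$ for some $K\leq G$, so no normal form is needed.
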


\begin{proof}
Put
\[
 K_{\mathbf a}
 =
 \bigoplus_{i=1}^k p^{a_i}R_ne_i
 \leq G.
\]
Then
$K_{\mathbf a}^\perp
=\bigoplus_{i=1}^k p^{n-a_i}R_nf_i$, and therefore
$\Lambda(K_{\mathbf a},1)=\Lambda_{\mathbf a}$.
\end{proof}

An orbit may contain several undecorated representatives.  Passing
from a given pair $[K,\nu]$ to $[K_{\mathbf a},1]$ may change the
subgroup $K$.

By \cite[Proposition~3.6 and Remark~3.7]{MN}, the binomial coefficient in
Corollary~\ref{cor: homocyclic orbit count} also
counts tensor-equivalence classes of fusion categories in the Morita class
of $\Vec_{C_{p^n}^k}$.  An undecorated module representative need not have
an untwisted pointed dual category.

\begin{remark}
Barthel's work on symmetric bilinear forms on finite abelian groups gives
useful context for the additional phenomena that occur for mixed exponents
\cite{Barthel}.  Theorem~3.2.1 and
Corollary~3.2.2 give a complete classification in the homogeneous case,
Theorem~3.3.2 gives a substantial structural treatment for two adjacent
exponent levels, and Section~3.4 exhibits new indecomposable singular
phenomena once exponent levels are separated, already at exponent gap two.
\end{remark}

For a general finite abelian $p$-group, every subgroup $p^r\mathbb H(G)$
is characteristic in the underlying abelian group.  Consequently
\[
 \operatorname{type}
 \bigl(\Lambda\cap p^r\mathbb H(G)\bigr)
\]
is an orthogonal-orbit invariant for every $r\geq0$.  The case $r=1$
already shows that abstract type is not complete for mixed exponents.

Mason and Ng give concrete examples of nonconjugate metabolizers in
\cite[Section~14, especially Example~14.1]{MasonNg}; in their
Example~14.1 the metabolizers have different abstract types.
Example~\ref{ex: mixed same type nonconjugate Lagrangians} shows that for
mixed exponents even the abstract type of the Lagrangian need not determine
its orthogonal orbit.

\begin{example}
\label{ex: mixed same type nonconjugate Lagrangians}
Fix an odd prime $p\equiv1\pmod4$, for example $p=5$, and put
\[
 G=C_{p^4}\oplus C_{p^3}\oplus C_{p^2}\oplus C_p,
 \qquad
 E=\mathbb H(G).
\]
Temporarily write the quadratic form additively as
$Q:E\to\mathbb Q/\mathbb Z$, so that the multiplicative form is
$\exp(2\pi iQ)$.  Let $B(u,v)=Q(u+v)-Q(u)-Q(v)$ be the associated
$\mathbb Q/\mathbb Z$-valued bilinear form.  For $n=1,2,3,4$, let
$e_n,f_n$ be a standard hyperbolic pair of order $p^n$, with
\[
 Q(ae_n+bf_n)=\frac{ab}{p^n}\pmod{\mathbb Z}.
\]
Choose $\epsilon\in(\mathbb Z/p^4\mathbb Z)^\times$ satisfying
$\epsilon^2=-1$ and use its reductions modulo $p^n$.  Put
\[
 x_n=e_n+f_n,
 \qquad
 y_n=\epsilon(e_n-f_n).
\]
Then $Q(x_n)=Q(y_n)=1/p^n$, $B(x_n,y_n)=0$, and the summands
$\langle e_n,f_n\rangle$, $n=1,2,3,4$, are mutually orthogonal.

Define
\[
 \Lambda_1
 =
 \left\langle
 p^2x_4,\ p^2y_4,\
 x_3+\epsilon y_3,\
 x_2+\epsilon y_2,\
 x_1+\epsilon y_1
 \right\rangle
\]
and
\[
 \Lambda_2
 =
 \left\langle
 px_4+\epsilon x_2,\
 p^2y_4,\
 px_3+\epsilon x_1,\
 py_3+\epsilon y_1,\
 py_2
 \right\rangle.
\]

Using the mutual orthogonality of the summands
$\langle e_n,f_n\rangle$ and $B(x_n,y_n)=0$, a direct calculation shows
that the displayed generators of both $\Lambda_1$ and $\Lambda_2$ are
isotropic and pairwise orthogonal.  In each case their orders form the
multiset $\{p^3,p^2,p^2,p^2,p\}$, and they are independent, as may be
checked componentwise using that $(x_n,y_n)$ is a basis of
$\langle e_n,f_n\rangle$ (since $-2\epsilon$ is a unit).  Hence
\[
 |\Lambda_1|=|\Lambda_2|=p^{10}=|G|,
\]
so both are Lagrangian and
\[
 \Lambda_1\cong\Lambda_2
 \cong C_{p^3}\oplus C_{p^2}^{\,3}\oplus C_p.
\]

For the first subgroup,
\[
 \Lambda_1\cap pE
 =
 \left\langle
 p^2x_4,\ p^2y_4,\
 p(x_3+\epsilon y_3),\
 p(x_2+\epsilon y_2)
 \right\rangle
 \cong
 C_{p^2}^{\,3}\oplus C_p.
\]
For the second, write
\[
 u=px_4+\epsilon x_2,\quad
 v=p^2y_4,\quad
 w=px_3+\epsilon x_1,\quad
 z=py_3+\epsilon y_1,\quad
 t=py_2.
\]
An element $au+bv+cw+dz+et$ lies in $pE$ exactly when
$a\equiv c\equiv d\equiv0\pmod p$; the obstructions are respectively the
$x_2,x_1,y_1$ components.  Therefore
\[
 \Lambda_2\cap pE
 =
 \langle pu,v,pw,pz,t\rangle
 \cong
 C_{p^2}^{\,2}\oplus C_p^{\,3}.
\]
Since $pE$ is characteristic, an orthogonal conjugacy would preserve the
abstract type of the intersection with $pE$.  The two types differ, so
$\Lambda_1$ and $\Lambda_2$ are not orthogonally conjugate.
\end{example}

\begin{remark}
For finite abelian groups $G$ with mixed exponents, the symmetric equivalence type of a Lagrangian subcategory
of $\Z(\Vec_G)$ need not determine its $\BrPic(\Vec_G)$-orbit.  Indeed,
the two Lagrangian subgroups in
Example~\ref{ex: mixed same type nonconjugate Lagrangians} have the same
abstract group type and hence give symmetrically equivalent Tannakian
categories, but they lie in distinct orthogonal, and therefore
Brauer--Picard, orbits.
\end{remark}

\bibliographystyle{amsalpha}
\bibliography{dnrefs,burnsiderefs}

\end{document}